\newcolumntype{L}[1]{>{\raggedright\let\newline\\\arraybackslash\hspace{0pt}}m{#1}}
\numberwithin{equation}{section}
\newtheorem{theorem}{Theorem}[section]
\newtheorem{cor}[theorem]{Corollary}
\newtheorem{lemma}[theorem]{Lemma}
\newtheorem{prop}[theorem]{Proposition} \theoremstyle{definition}
\newtheorem{definition}[theorem]{Definition}
 \theoremstyle{remark}
\newtheorem{rem}[theorem]{Remark}
\theoremstyle{plain}
\newtheorem{assumption}{Assumption}
\newcommand{\col}[1]{\begin{bmatrix}#1\end{bmatrix}}
\newcommand{\bra}[1]{\left[#1\right]} 
\newcommand{\mat}[1]{\begin{matrix}#1\end{matrix}} 
\newcommand{\bmat}[1]{\bra{\mat{#1}}} 
\def\beq{\begin{equation} } \def\eeq{\end{equation}}
\DeclarePairedDelimiterX\set[2]{\{}{\}}{#1\,\delimsize\vert\,#2}
\newcommand\comments[1]{\textcolor{orange}{#1}}
\def\RR{\mathbb R}
\def\ben{\begin{enumerate} }
\def\een{\end{enumerate} }
\def \p { \partial}
 \def \r{ \rangle}
 \def \f{ \mathbf{f}}
\def \Id{ \text{Id} } 
\def \div{ \text{div}}  
\def \n{ \nabla}
\def \t{ \mathbf{t}}
\def \htau{ \hat{\tau}}
\def \WF{ \text{WF}} 
\def \BtoB{_{\p \to \p}}
\def \CtoS{_{\mathbf C \to \mathbf S}}
\def \CtoB{_{\mathbf C \to \p}}
\def \BtoS{_{\p \to \mathbf S}}
\def \Op{ Q} 
\newcommand \bad{bad}
\newcommand\DT{\ensuremath{_{\text{\normalfont DT}}}}		
\newcommand\MDT{\ensuremath{_{\text{\normalfont MDT}}}}	
\renewcommand{\restriction}{\mathord{\upharpoonright}}
\def \DMDT{ \mathcal D \MDT}
\newcommand\tail{\ensuremath{_{\text{tail}}}}
\newcommand\subin{\ensuremath{_{\text{in}}}}
\newcommand\subout{\ensuremath{_{\text{out}}}}
\newcommand\prin{\ensuremath{^{\text{prin}}}}
\newcommand\PS{{P\!/\!S}}
\newcommand\red{\textcolor{red}}
\def\blfootnote{\gdef\@thefnmark{}\@footnotetext}
\title{Recovery of discontinuous Lam\'{e} parameters from local dynamic boundary data}
\author{Peter Caday
\thanks{Intel, Hillsboro OR, USA. \texttt{peter.caday@gmail.com}}
\and
Maarten V. de Hoop
\thanks{Simons Chair in Computational and Applied Mathematics and Earth Science, Rice University, Houston TX, USA. \texttt{mdehoop@rice.edu}}
\and
Vitaly Katsnelson
\thanks{College of Arts and Sciences, New York Institute of Technology, New York NY, USA. \texttt{vkatsnel@nyit.edu}. }
\and
Gunther Uhlmann
\thanks{Department of Mathematics, University of Washington, Seattle WA, USA. \texttt{gunther@math.washington.edu}}
\thanks{Institute for Advanced Study, The Hong Kong University of Science and Technology, Clear Water Bay, Hong Kong.}
}
\begin{document}

\maketitle
 \begin{abstract}
 Consider an isotropic elastic medium $\Omega \subset \RR^3$ whose Lam\'{e} parameters are piecewise smooth.
 In the elastic wave initial value inverse problem, we are given the solution operator for the elastic wave equation, but only outside $\Omega$ and only for initial data supported outside $\Omega$. Using the recently introduced scattering control series in the acoustic case, we prove that piecewise smooth Lam\'{e} parameters are uniquely determined by this map under certain geometric conditions. We also show the extent that multiple scattering in the interior may be suppressed and eliminated with access to only this partial solution map, which is akin to the dynamic Dirichlet-to-Neumann map.
 \end{abstract}

\section{Introduction}

The wave inverse problem asks for the unknown coefficient(s), representing wave speeds, of a wave equation inside a domain of interest $\Omega$, given knowledge about the equation's solutions (typically on $\p \Omega$). Traditionally, the coefficients are smooth, and the data is the Dirichlet-to-Neumann (DN) map, or its inverse. The main questions are uniqueness and stability: can the coefficients be recovered from the Dirichlet-to-Neumann map, and is this reconstruction stable relative to perturbations in the data? In the case of a scalar wave equation with smooth coefficients, a number of results by Belishev, Stefanov, Vasy, and Uhlmann \cite{Belishev-multidimenIP,UVLocalRay, SURigidity} have answered the question in the affirmative. For the piecewise smooth case, a novel scattering control method was developed in \cite{CHKUControl} in order to show in \cite{CHKUUniqueness} that uniqueness holds as well for piecewise smooth wave speeds with conormal singularities, under very mild geometric conditions. We term that particular method as \emph{blind scattering control} since it assumes absolutely no knowledge of the wavespeed in the interior region, and uses only measurements exterior to $\Omega$. Our goal is to extend these results to the isotropic elastic system. This presents new difficulties due to the lack of the sharp form of the unique continuation result of Tataru since we have to deal with two different wave speeds. \\

In the elastic setting, or for that matter, any hyperbolic equation with multiple wave speeds, the story is far from complete. Consider the isotropic elastic wave equation in a bounded domain $\Omega$. The wave operator for elastodynamics is given as $\Op=\rho\p^2_t - L$ with
\[
L = \nabla \cdot( \lambda \div \otimes \Id + 2\mu \widehat{\nabla}),
\]
$\rho$ is the density, $\lambda$ and $\mu$ are the Lam\'{e} parameters, and $\widehat{\nabla}$ is the \emph{symmetric gradient} used to define the strain tensor for an elastic system via $\widehat{\nabla} u = (\nabla u + (\nabla u)^T)/2$ for a vector valued function $u$. Operator $\Op$ acts on a vector-valued distribution $u(x,t)= (u_1,u_2,u_3)$, the \emph{displacement} of the elastic object.
For the isotropic, elastic setting with smooth parameters, the uniqueness question was settled by Rachele in \cite{Rach00} and Hansen and Uhlmann \cite{HUPolarization}. First, Rachele proved that one can recover the jet of $\lambda$, $\mu$, and $\rho$ at $\p \Omega$ explicitly. In \cite{Rach00, RachBoundary}, she showed that one can recover the $\textit{p}$ and $\textit{s}$ wave speeds in $\Omega$ provided the hyperbolic DN map is known on the whole boundary and assuming strict geometry that preclude caustics. Hansen and Uhlmann studied the problem with a residual stress, allowing conjugate points and caustics, and showed that one can recover both lens relations and derived the consequences of that. These are all results for the global problem where the DN map is known on the whole boundary. Stefanov, Vasy, and Uhlmann \cite{UVLocalRay,SUVRigidity} have extended these results to the local inverse problem using the Uhlmann-Vasy methods on the local geodesic ray transform \cite{UVLocalRay} and using a pseudolinearization first developed in \cite{SURigidity}. They are able to do a local recovery of both wave speeds that depend on three parameters $\lambda, \mu, \rho$. There are also related inverse problems in \emph{thermoacoustic tomography} where one tries to recover a source (initial condition) rather than a PDE parameter \cite{Tittelfitz-TAT, Kuchment-mathOfTAT,Kuchment-ReconstructionTAT,SU-TATVariable,SU-TATBrain}.\\

No such results are known for when the elastic parameters have interfaces (conormal singularities). Even the blind scattering control method, which is very similar to boundary control and was used to prove uniqueness in \cite{CHKUUniqueness} for the acoustic setting, does not readily apply here. The reason is very simple: although unique continuation results hold for the elastic setting, they are far weaker, being based on the slowest wave speed, and so the boundary control method is not known to work since it is not possible, or at least not known, how to decouple the elasticity system completely even though it is easy to do that microlocally. A Lam\'{e} type of system having the same principal part which can be decoupled fully was studied by Belishev in \cite{BelLameType} and the boundary control method (see \cite{Belishev-BCInRecontr,Belishev-dynamicalVariant,Belishev-multidimenIP}) was used for unique recovery. Such an approach only worked because the system was able to fully decouple so that the scalar boundary control methods would apply to the decoupled constituents. Therefore, it fails for the piecewise smooth setting where the coupling of different modes at the interfaces is unavoidable, and so it does not simplify matters here to study Belishev's Lam\'{e} type system with piecewise smooth parameters. Instead, we focus on a geometric uniqueness problem analogous to \cite{SUVRigidity} and employ a layer stripping argument to utilize the results in \cite{UVLocalRay,SUVRigidity} in the smooth case.
\\

 The main result of this paper is that under certain geometric assumptions, we show unique determination of Lam\'{e} parameters that contain singularities via microlocal analysis, scattering control, and a layer stripping argument akin to \cite{SUVRigidity}.
 Most proofs are microlocal to avoid using unique continuation results, but we require an important geometric assumption, which is a \emph{convex foliation condition} (see \S\ref{s: convex foliation}) for each wave speed $c_\PS$. As mentioned in \cite{SUVRigidity}, for a particular wave speed, this condition relates to the existence of a function with strictly convex level sets, which in particular holds for simply connected compact manifolds with strictly convex boundaries such that the geodesic flow has no focal points (lengths of non-trivial Jacobi fields vanishing at a point do not have critical points), in particular if the curvature of the manifold is negative (or just non-positive). Also, as explained in \cite{SUV-ElasticLocalRay}, if $\Omega$ is a ball and the speeds increase when the distance to the center decreases (typical
for geophysical applications), the foliation condition is satisfied.\\

 The other key ingredient is that even though a lens map does not make sense with internal multiples present, one may use a scattering control-like process introduced in \cite{CHKUControl} (not blind scattering control which requires sharp unique continuation theorems) to recover lens data for singly reflected rays. This construction will also be entirely microlocal and circumvents the need for unique continuation results. We denote by $u_h$ the solution to the homogeneous elastic equation on $\RR^3$ with initial time Cauchy data $h$. All of our function spaces are of the form $X(\cdot; \mathbb{C}^3)$ since we have vector valued functions in the elastic setting, but throughout the paper, we will not write the vector valued part $\mathbb{C}^3$ to make the notation less burdensome. Let $\overline{\Omega}^c$ be the complement of $\overline{\Omega}$ and we define the \emph{outside measurement operator} $\mathcal F: H^1_c(\overline{\Omega}^c) \oplus L^2_c(\overline{\Omega}^c) \to C^0(\RR_t; H^1(\overline{\Omega}^c)) \cap  C^1(\RR_t; L^2(\overline{\Omega}^c))$ as
\[
\mathcal F: h_0 \to u_{h_0}(t)|_{\overline{\Omega}^c}.
\]
 Due to a technicality, we use slightly different sets for our measurement region than $\overline{\Omega}^c$ in the main body, but the idea is the same. The operator $\mathcal F$ only measures waves outside $\Omega$ after undergoing scattering within $\Omega$, and it is associated to a particular elastic operator $\Op$ with a set of parameters. Given a second set of elastic parameters $\tilde\lambda$, $\tilde\mu$ we obtain analogous operators $\tilde \Op$ and $\tilde{\mathcal F}$. Denote the associated $\PS$ wave speeds $c_{\PS}$ and $\tilde c_{\PS}$. From here on, we use $\PS$ to refer to either subscript or wave speed. In addition, to avoid the technical difficulties of dealing with corners or higher codimension singularities of $c_{\PS}$, we always assume that the singular support of $c_{\PS}, \tilde c_{\PS}$ lies in a closed, not necessarily connected hypersurface in $\Omega$; we will deal with corners and edges in a separate paper. We will prove the following result.

\begin{theorem}
Assume $\mathcal F = \tilde {\mathcal F}$, and that $c_{\PS},\tilde c_{\PS}$ satisfy a certain geometric foliation condition. Then $c_P = \tilde c_P$ and $c_S = \tilde c_S$ inside $\Omega$.
\end{theorem}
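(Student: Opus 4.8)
The plan is to combine a microlocal analysis of how singularities of $\mathcal F$ propagate with a layer-stripping scheme organized by the convex foliation, reducing in each smooth layer to the local geodesic ray transform inversion of \cite{UVLocalRay,SUVRigidity,SUV-ElasticLocalRay}. Throughout I would work microlocally, tracking only the propagation of singularities along the $P$- and $S$-bicharacteristics, so that no sharp unique continuation (unavailable here because of the two distinct speeds) is ever invoked.

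First I would carry out boundary determination: near $\partial\Omega$ the speeds are smooth and $\mathcal F$ encodes the dynamic Dirichlet-to-Neumann data there, so the equality $\mathcal F = \tilde{\mathcal F}$ forces the full jets of $c_P,c_S$ to agree on $\partial\Omega$, as in \cite{Rach00,RachBoundary}. Next, examining the singularities that reflect back into the exterior region, I would locate the component of the singular support of $c_\PS$ that is outermost along the foliation --- this is the first interface $\Sigma_1$ --- and argue that $\mathcal F = \tilde{\mathcal F}$ forces $\Sigma_1 = \tilde\Sigma_1$.

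The central step is to recover, from $\mathcal F$ alone, the scattering (lens) relation for \emph{singly reflected} rays in the outermost layer $\Omega_1$ between $\partial\Omega$ and $\Sigma_1$. Here I would invoke the scattering-control construction of \cite{CHKUControl}: from exterior Cauchy data one synthesizes waves which, after a single reflection off $\Sigma_1$, exit into $\overline{\Omega}^c$ while all higher-order internal multiples are microlocally suppressed. Because $\mathcal F = \tilde{\mathcal F}$, the controlled exterior responses coincide, and hence the incoming-to-outgoing scattering relations for the singly reflected $P$- and $S$-rays agree in the two media. This pins down the restricted local geodesic ray transforms of the $c_P$- and $c_S$-metrics on $\Omega_1$; the convex foliation condition together with the injectivity and pseudolinearization arguments of \cite{SUVRigidity,SUV-ElasticLocalRay} then yields $c_P=\tilde c_P$ and $c_S=\tilde c_S$ throughout $\Omega_1$.

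Finally I would iterate. With $\Omega_1$ and $\Sigma_1$ matched, the reflection, transmission, and $P$--$S$ mode conversion at $\Sigma_1$ propagate singularities identically in both media, so the doubly scattered rays that probe the next layer $\Omega_2$ can be controlled in the same way to recover their lens data; inverting the local ray transform there matches the speeds in $\Omega_2$, and one continues leaf by leaf until the foliation exhausts $\Omega$. The hard part will be the central step in the presence of \emph{mode coupling}: a single incoming mode generates reflected and transmitted $P$- and $S$-waves, so isolating the purely singly-reflected contribution --- and proving both that the scattering-control series still converges and that the recovered relation genuinely determines each speed's ray transform rather than an entangled $P/S$ combination --- is where the elastic problem departs sharply from the scalar case and demands the bulk of the microlocal work.
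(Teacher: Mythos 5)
Your high-level strategy matches the paper's: work microlocally to avoid unique continuation, organize a layer-stripping induction along the convex foliation, use scattering control to suppress internal multiples, and reduce to the local boundary/lens rigidity results of Stefanov--Uhlmann--Vasy. Two points, however, are off in ways that matter. First, a genuine gap: your induction cannot cross an interface as stated. Once you have matched the speeds in $\Omega_1$ up to $\Sigma_1$ \emph{from above}, you assert that "reflection, transmission, and $P$--$S$ mode conversion at $\Sigma_1$ propagate singularities identically in both media." But the transmission operators and the directions of the transmitted/mode-converted rays (Snell's law) depend on the speeds on the \emph{far} side of $\Sigma_1$, which at that stage are still unknown and a priori different for the two media. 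Without pinning these down you cannot even guarantee that the first returning arrival you read off below the interface is a $P$-arrival for $\tilde c_\PS$ when it is one for $c_\PS$, so the recovered "lens data" could entangle the two modes across the two media. The paper closes exactly this gap with two dedicated lemmas: one showing that $\mathcal F=\tilde{\mathcal F}$ together with equality of the speeds above $\Sigma_\tau$ forces the principal symbols of the reflection operators $M_R$ to agree on the upper side of the interface, and a second (an explicit computation with the traction formulation) showing that equality of these reflection symbols determines the speeds on the underside $\Sigma_\tau^+$, hence the transmission operators. This is the step where "the elastic problem departs sharply from the scalar case," rather than convergence of the scattering control series --- the paper's main proof never uses the convergent series (that is confined to the appendix on blind control); it uses a finite recursive parametrix with finitely many branchings, so no convergence issue arises.

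Second, a structural discrepancy: you propose to recover the lens relation of the layer $\Omega_1$ from \emph{singly reflected} rays bouncing off $\Sigma_1$. The paper instead runs the foliation continuously: at each leaf $\Sigma_\tau$ it recovers the \emph{subsurface} travel times $l_{\PS,\tau}$ of short, near-tangent, purely \emph{transmitted} rays that dive just below $\Sigma_\tau$ and return to it, and then applies local boundary rigidity (Theorem \ref{thm: UV local rigidity}) with $\Sigma_\tau$ playing the role of the convex boundary. Reflected branches are precisely what the scattering-control tail is built to eliminate, not what carries the travel-time data; and local boundary rigidity needs the boundary distance function between nearby points of $\Sigma_\tau$ (short diving geodesics), which two-way boundary--interface--boundary reflection times do not directly supply. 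Your outline would need to be reorganized along these lines, and supplemented with the reflection-coefficient/jump recovery above, to constitute a proof.
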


 Via a layer stripping approach, we will obtain local travel time data and lens relations at the current layer from $\mathcal F$ \footnote{The fact that the interfaces are not dense makes this possible theoretically in the sense that there will exist an open set of rays at the current layer that \emph{do not cross any interfaces} after a finite time when they are close to being tangent to the layer.}. To do this, we will employ an analogue of the microlocal scattering control construction appearing in \cite[section 5]{CHKUControl} to create specific $P$ or $S$ waves at the current, deepest layer to extract local travel time data and lens relations without having the internal multiples interfere with recovery of this data. Without such techniques, one would not be able to distinguish waves that contain this subsurface travel time data from internal multiples created from the conormal singularities of the Lam\'{e} parameters.\\

Since we use a scattering control parametrix in the above proof coupled with knowledge of the Lam\'{e} parameters on part of the interior, it becomes natural to address an old question, albeit quite different than the one posed above, on how much multiple scattering may one control and eliminate in the elastic setting having access \emph{only} to the \emph{outside measurement operator} and no knowledge of the Lam\'{e} parameters in the interior. That is, how much of blind scattering control addressed in the acoustic setting in \cite{CHKUControl} applies in the elastic setting? A partial answer for the elastic setting is in \cite{Wapenaar-Elastic}, where they assume knowledge of first arrival times corresponding to purely transmitted $P$ waves, $S$ waves, and certain mode converted waves. This is a strong assumption since a single wave packet entering $\Omega$ produces numerous scattered waves that one measures at the surface and one cannot \emph{a priori} associate travel times with a particular primary reflected wave versus a secondary reflected wave. We wish to dispense with such an assumption and in Appendix \ref{s: blind scatt control}, we show how with blind scattering control, one may control all the internal multiples within the $S$-domain of influence of the initial source, and certain additional multiples beyond this region. Essentially, any multiples that may be recreated with a tail behind the initial source may also be controlled and eliminated. It should be emphasized that this result is quite disparate from the main result of the paper since we do a time reversal procedure of \emph{all} the scattered data one obtains outside $\Omega$, while in the main theorem, one does a parametrix construction using very \emph{specific}, reflected $\PS$-waves to eliminate particular waves in the interior.
\\

\section{The data map}
We will use this section to give the basic definitions and setup for the main theorem.

\subsection{Geometric setup}
Let $\Omega$ be a bounded region in $\RR^3$ with smooth boundary. It represents a linearly elastic, inhomogeneous, isotropic object. We assume the Lam\'{e} parameters $\lambda(x)$ and $\mu(x)$ satisfy the \emph{strong convexity condition}, namely that $\mu>0$ and $3\lambda+2\mu >0$ on $\overline{\Omega}$. Also, assume the parameters $\lambda, \mu$ lie inside $L^{\infty}(\Omega)$ and that $\lambda, \mu$ are piecewise smooth functions that are singular only on a set of disjoint, closed, connected, smooth hypersurfaces $\Gamma_i$ of $\overline{\Omega}$, called \emph{interfaces}. We also set $\Gamma = \bigcup \Gamma_i$ to be the collection of all the interfaces.

 The two wave speeds are $c_P = \sqrt{(\lambda + 2\mu)/\rho}$ and $c_S = \sqrt{\mu/\rho}$, where $\rho$ is the density. In particular, this ensures that $c_P > c_S$ on $\overline{\Omega}$.
As in \cite{CHKUControl}, we will probe $\Omega$ with Cauchy data (an \emph{initial pulse}) concentrated close to $\Omega$ with a particular polarization, in some Lipschitz domain $\Theta \supset \Omega$. While we are interested in what occurs inside $\Omega$, the initial pulse region $\Theta$ will actually play a larger role in the analysis.

  Since we take measurements outside $\Omega$, let us extend the Lam\'{e} parameters to all of $\RR^n$ so that they are smooth outside $\overline{\Omega}$ and our wavefields are now well-defined there as well. We will denote by
\[
g_{\PS} = c_{\PS}^{-2}dx^2
\]
the two different metrics associated to the rays. As in \cite{CHKUControl}, we can define the distance functions $d_{\PS}(\cdot, \cdot)$ corresponding to the respective metrics by taking the infimum over all lengths of the piecewise smooth paths between a pair of points. Here and throughout the paper, a $\PS$ subscript indicates either $P$ or $S$ subscripts.

Now, define the \emph{$P$-depth} $d^*_{\Theta}(x)$ of a point $x$ inside $\Theta$:
\[
d^*_{\Theta}(x) = \begin{cases}
+d_P(x,\p \Theta), & x \in \Theta ,\\
-d_P(x, \p \Theta), & x \notin \Theta.
\end{cases}.
\]

We use the (rough) metric $g_P$ since finite speed of propagation for elastic waves is based on the faster $P$-wavespeed. We will add to the initial pulse a Cauchy data control (a \emph{tail}) supported outside $\Theta$, whose role is to remove multiple reflections up to a certain depth, controlled by a time parameter $T \in (0,\frac{1}{2}\text{diam}_P\Omega)$. This will require us to consider controls supported in a sufficiently large Lipschitz neighborhood $\Upsilon\subsetneq\RR^3$ of $\overline{\Theta}$ that satisfies $d_S(\partial \Upsilon, \overline{\Theta})>2T$ and is otherwise arbitrary. It will be useful to define $\Theta^\star=\set*{x \in \Upsilon}{d^*_{\Theta}(x)<0}$.

\subsection{Elastic waves}\label{s: basic setup}
 Recall the wave operator for elastodynamics $\Op$ discussed in the introduction given as $\Op=\rho \p^2_t - L$ with
\[
L = \nabla \cdot( \lambda \div \otimes \Id + 2\mu \widehat{\nabla}).
\] Let us also recall the characteristic set $\Op$ defined in \cite{Rach00} and \cite{HUPolarization}. It consists of two mutually disjoint sets $\Sigma_P,\Sigma_S \subset T^*\RR^3$ where $\Sigma_{\PS}$ are the characteristic sets for the scalar wave operators $c_{\PS}^{-2}\p^2_t - \Delta$.

Let $\tilde{\mathbf{C}}$ be the space of Cauchy data of interest:
\[
\tilde{\mathbf{C}} = H^1_0(\Upsilon;\mathbb{C}^3) \oplus L^2(\Upsilon; \mathbb{C}^3)
\]
although we will suppress the `$\mathbb{C}^3$' notation when it is clear from the context. We equip the space with the \emph{elastic energy inner product}
\[
\langle (f_0,f_1),(g_0,g_1) \rangle = \int_{\Omega} \left(f_1 \cdot \bar{g}_1  +
\lambda(x) \div(f_0) \div(\bar{g}_0)+ 2\mu(x)\widehat{\nabla}f_0 :\widehat{\nabla} \bar{g}_0\right)\,dx.
\]
Within $\tilde {\mathbf C}$, define the subspaces of Cauchy data supported inside and outside $\Theta_t$:
\begin{align*}
\mathbf H &= H^1_0(\Theta) \oplus L^2(\Theta), \qquad \qquad
\tilde{\mathbf H}^\star = H^1_0(\Theta^\star) \oplus L^2(\Theta^\star).
\end{align*}
Define the energy of Cauchy data $h=(h_0,h_1) \in \tilde{\mathbf C}$ in a subset
$W \subset \RR^3$:
\[
\mathbf E_W(h):= \int_W \left(
\lambda(x) |\div(h_0)|^2 + \mu(x)|\widehat{\nabla}h_0|^2
+ |h_1|^2
\right)dx.
\]
We need another definition, related to the harmonic extensions used in \cite{CHKUControl}.
\begin{definition}
We say that $(f,g) \in H^1(\RR^3) \oplus L^2(\RR^3)$ is \emph{stationary} on a domain $U$ if $Lf = g =0$ on $U$.
\end{definition}

Next, define $F$ to be the solution operator for the elastic wave initial value problem:
\begin{equation}
F: H^1(\RR^n)\oplus L^2(\RR^3) \to C(\RR, H^1(\RR^3))
\qquad
F(h_0,h_1)=u \text{ s.t. }
\begin{cases}
	\Op u &= 0,\\
				u\restriction_{t=0} &= h_0,\\
			\p_t u\restriction_{t=0} &= h_1.
	\end{cases}
\end{equation}
Let $R_s$ propagate Cauchy data at time $t=0$ to Cauchy data at $t=s$:
\begin{equation}
R_s = (F, \p_t F)\Big|_{t=s}:H^1(\RR^3) \oplus L^2(\RR^3) \to H^1(\RR^3)\oplus L^2(\RR^3).
\end{equation}
Now combine $R_s$, with a time-reversal operator $\nu: \tilde{\mathbf C}
\to \tilde{ \mathbf C}$, defining for a given $T$
\[
R = \nu \circ R_{2T}, \qquad \qquad \nu: (f_0,f_1) \mapsto (f_0,-f_1).
\]
In our problem, only waves interacting with $(\Omega,\mu,\lambda)$ in the time interval $[0,2T]$ are of interest. Consequently, let us ignore Cauchy data not interacting with $\Theta$, as follows.

Let $\mathbf G=\tilde{\mathbf H}^\star\cap\big( R_{2T}(H^1_0(\RR^3\setminus\overline\Theta)\oplus L^2(\RR^3\setminus\overline\Theta))\big)$ be the space of Cauchy data in $\tilde{\mathbf C}$ whose wave fields vanish on $\Theta$ at $t=0$ and $t=2T$. 
Let $\mathbf C$ be its orthogonal complement inside $\tilde{\mathbf C}$, and ${\mathbf H}^\star$ its orthogonal complement inside $\tilde{\mathbf H}^\star$. With this definition, $R_{2T}$ maps $\mathbf C$ to itself isometrically.

In Appendix \ref{s: blind scatt control} we show how much scattering one may control in the elastic setting without any knowledge of the parameters inside $\Omega$ based on unique continuation theorems that are much weaker due to the presence of multiple wave speeds. Despite not having full unique continuation, one is still able to construct parametrices for the elastic equation that are approximate solutions to the elastic equation. With such parametrices, we will use the principles from scattering control to obtain ``subsurface'' travel time data by eliminating certain scattered constituents microlocally. This is precisely the task we pursue in the remainder of the paper. First, we develop our main technical tool, which is the parametrix for elastic wave solutions.

\section{Unique recovery of piecewise smooth \textit{P}- and \textit{S}-wave speeds: A geometric point of view}

We consider the problem of showing that both the $P$ and $S$ wavespeeds are determined by the outside-measurement-operator under certain geometric conditions that give us access to all the requisite rays.

\subsection{Foliation condition}\label{s: convex foliation}

 First, since our proof of the main theorem will require recovery of all the parameters in a layer stripping argument, we make a simplifying assumption and assume the density \[\rho =1\] throughout the paper. We will explain in section \ref{s: discussion} how one might dispense with such an assumption.

Let us recall all the definitions from \cite{CHKUUniqueness}, adapted to the elastic setting. We start by extending the convex foliation condition to our piecewise smooth setting, keeping in mind that $\Gamma_i,\Gamma$ are the interfaces defined in section \ref{s: basic setup}.
\begin{definition}
$\rho: \overline \Omega \to [0,\tau_0]$ is a (piecewise) \emph{convex foliation} for $(\Omega,c_{\PS})$ (meaning for both $c_P$ and $c_S$ simultaneously) if the following conditions hold:
\begin{itemize}
\item
$\p \Omega = \rho^{-1}(0)$ and $\rho^{-1}(\tau_0)$ has measure zero;

\item
$\rho$ is smooth and $d\rho \neq 0$ on $\rho^{-1}((0,\tau_0))\setminus \Gamma$.

\item each level set $\rho^{-1}(t)$ is geodesically convex with respect to $c_p$ and $c_s$ when viewed from $\rho^{-1}((t,T))$, for $t \in [0,\tau_0)$;

\item the interfaces of $c_{\PS}$ are level sets of $\rho_i$, that is $\Gamma_i \subset \rho^{-1}(t_i)$ for some $t_i$;
\item $\rho$ is upper semicontinuous.
\end{itemize}
We say that $(c_P,c_S)$ satisfies the \emph{convex foliation condition} if there exists a convex foliation for $(\Omega, c_\PS)$.
\end{definition}

Having interfaces being part of the foliation allows for some unusual configurations. In addition, the leaves of the foliation may have intricate, non-trivial topologies and the geometry can be complicated as well, allowing conjugate points (see below Figure \ref{f: foliation pic}).
\begin{figure}[H]
\centering
             \includegraphics[scale=0.8]{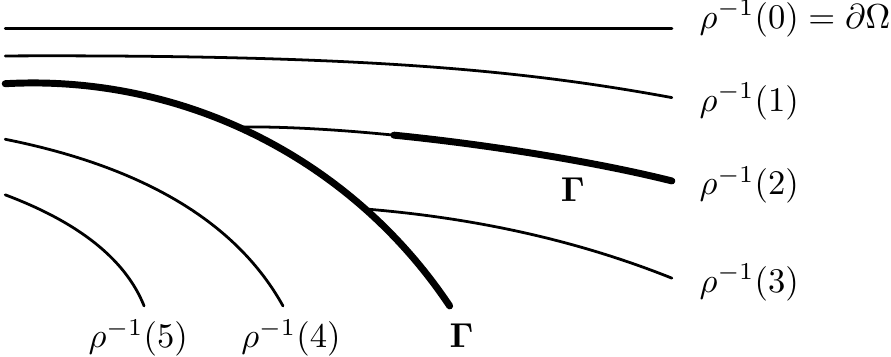}
\caption{An example of a piecewise convex foliation. Thick lines indicate the interfaces $\Gamma$; thin lines trace selected level sets of the foliation function $\rho$, which is allowed (but not required) to be singular at $\Gamma$.}
\label{f: foliation pic}
\end{figure}

From now on we assume
\begin{assumption}
$(\Omega, c_\PS)$ satisfy the convex foliation condition.
\end{assumption}

 Our main theorem is

\begin{theorem}\label{thm: main theorem}
Under the convex foliation condition, if $\mathcal F = \tilde {\mathcal F}$, then $c_{\PS} = \tilde c_{\PS}$.
\end{theorem}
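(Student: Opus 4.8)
The plan is to prove Theorem~\ref{thm: main theorem} by a \emph{layer stripping} induction on the leaves of the convex foliation $\rho$, reducing the piecewise-smooth global problem to the smooth local boundary rigidity results of \cite{UVLocalRay, SUVRigidity} one layer at a time. The inductive hypothesis is that $c_\PS = \tilde c_\PS$ (together with all the $\PS$-jet data at the interfaces) on the region $\rho^{-1}([0,t))$ already stripped away, i.e.\ in a neighborhood of $\partial\Omega$ down to depth $t$; we then show they agree on the next leaf $\rho^{-1}(t)$ and on a slightly deeper collar $\rho^{-1}([t, t+\delta))$. The base case is the recovery of the Lam\'e parameters and their jets at $\partial\Omega$, which follows from the boundary determination results of Rachele \cite{Rach00, RachBoundary} applied here via the outside measurement operator $\mathcal F$ (the jets at the boundary are determined by the short-time singularity structure of the data, which $\mathcal F$ sees).

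The technical heart of each inductive step is to extract, from the equality $\mathcal F = \tilde{\mathcal F}$, local travel-time and lens data for \emph{singly reflected} $P$- and $S$-rays at the current deepest leaf. First I would invoke the microlocal scattering control construction adapted from \cite[section 5]{CHKUControl} (as announced in the introduction) to synthesize, using Cauchy data supported in $\Upsilon$ outside $\Theta$, wave packets that travel as a pure $P$ or pure $S$ mode, reflect once off the leaf $\rho^{-1}(t)$, and return to $\overline\Omega^c$ without interference from internal multiples. The footnote's observation is essential here: because the interfaces are not dense, there is an open set of rays at the current layer that, when nearly tangent to the leaf, cross no further interface for a finite time; these are precisely the rays whose return data is uncontaminated. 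Since the parameters are already known above depth $t$ by the inductive hypothesis, one can back-propagate the known exterior data through the known region and read off the reflection/transmission behaviour at the leaf, thereby recovering the local lens relation and boundary distance function for $g_\PS$ restricted to the collar $\rho^{-1}([t,t+\delta))$, for \emph{both} speeds simultaneously.

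With local lens/travel-time data for $g_P$ and $g_S$ in hand on the collar, the next step is to convert this geometric data into pointwise equality of the speeds. Here I would apply the local boundary rigidity and local geodesic-ray-transform inversion of Stefanov--Uhlmann--Vasy \cite{UVLocalRay, SUVRigidity} (in the form used for the elastic setting in \cite{SUV-ElasticLocalRay}), which under exactly the geodesic convexity of the leaves guaranteed by the convex foliation condition yields local uniqueness: the lens data of a single metric near a strictly convex hypersurface determines the metric (hence the conformal factor $c_\PS^{-2}$, hence $c_\PS$) in a one-sided neighborhood. Applying this to $g_P$ and $g_S$ separately propagates the equality $c_\PS = \tilde c_\PS$ from depth $t$ down to depth $t+\delta$, and a standard continuity/connectedness argument along the foliation parameter $\tau \in [0,\tau_0]$ closes the induction and exhausts all of $\Omega$.

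The main obstacle, and the step requiring the most care, is the microlocal decoupling and multiple-scattering suppression in the second paragraph: unlike the acoustic case, a single incident packet at an interface produces reflected and transmitted $P$ and $S$ waves (mode conversion), so one must argue that the scattering-control parametrix genuinely isolates the singly-reflected $\PS$ contribution and that its returning singularity can be distinguished, at the level of wavefront sets in $\Sigma_P$ versus $\Sigma_S$, from all the mode-converted internal multiples. This is delicate precisely because, as emphasized in the introduction, sharp unique continuation (Tataru-type) fails with two speeds, so the argument cannot rely on exact cancellation of tails and must instead be carried out microlocally, controlling only the singularities that the foliation geometry guarantees are accessible. Verifying that the convex foliation condition supplies \emph{enough} such accessible rays to determine the speeds on the entire leaf---rather than merely a dense or open subset---is the crux on which the whole layer-stripping scheme rests.
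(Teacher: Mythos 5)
Your overall strategy---layer stripping along the foliation, microlocal scattering control to isolate singly reflected $\PS$-returns, then the local boundary rigidity of Stefanov--Uhlmann--Vasy---is the same as the paper's, which organizes the induction as a minimal-depth contradiction (take $\tau$ to be the minimum of $\rho$ on $\Omega_r\cap\supp\big(|c_P-\tilde c_P|^2+|c_S-\tilde c_S|^2\big)$) rather than an explicit collar-by-collar induction; no separate Rachele-type base case is needed in that formulation. The issue you single out as the crux---whether the foliation supplies enough accessible rays to determine the speeds on the \emph{entire} leaf rather than on a dense subset---is resolved cheaply: by continuity of the boundary distance function, agreement of $d_g$ and $d_{\tilde g}$ on a dense set of boundary point pairs suffices for the rigidity theorem (Corollary \ref{cor: rigidity from dense set of point}), and density of the usable (escapable, non-glancing) covectors is exactly Lemma \ref{l: mathcal S is big enough}. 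So that is not where the difficulty lies.

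The genuine gap is at leaves that coincide with interfaces. Your second paragraph asserts that one can ``read off the reflection/transmission behaviour at the leaf'' and thereby recover the lens relations below it for both speeds. But when $\Sigma_\tau\subset\Gamma$, the region just below the leaf is unknown territory for the second system: even if, via Proposition \ref{prop: internal source construction}, you arrange that $u$ is a pure $P$-wave on the underside of $\Sigma_\tau$, nothing in your argument guarantees that $\tilde u$ is also a pure $P$-wave there, so you cannot match the first returning singularity of $u$ (which encodes $l_{P,\tau}(x,\xi)$) with the corresponding $P$-return of $\tilde u$ rather than with an $S$-return or a mode-converted multiple of $\tilde u$. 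The paper closes this by first showing that $\mathcal F=\tilde{\mathcal F}$ determines the principal symbol of the reflection operator $M_R$ on the upper side of each interface (Lemma \ref{l: recover refl coeffs}), and then proving by an explicit computation in the traction formulation that $M_R\prin=\tilde M_R\prin$ forces $c_\PS=\tilde c_\PS$ on the far side $\Sigma_\tau^+$ (Lemma \ref{l: jumps from reflection coeff}, Appendix \ref{app: refl/trans}); only after that do the transmitted polarizations and travel times of the two systems correspond mode by mode, allowing the induction to cross the interface. Some ingredient of this kind---an argument that the data determines the infinitesimal jumps of \emph{both} speeds across the interface before you probe below it---is indispensable, and it is precisely the new elastic difficulty your proposal leaves unaddressed.
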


We note that the case where $c_P$ and $c_S$ have separate foliations does not add much more generality to the theorem (see Remark \ref{rem: weakening foliation}).

\begin{rem}\label{rem: weakening foliation}
We are assuming that the level sets of one function $\rho$ produces a convex foliation for both the $c_P$ and $c_S$ wave speed. One may wonder whether this is strictly necessary since we recover the wave speeds one at a time in the proof. Upon close examination of the main proof, it will be vital that the interfaces coincide with the leaves of the foliation so that we get the correct scattering behavior that ensures enough branches of a particular ray return outside $\Omega$, which is the measurement region. Thus, we may allow $c_P$ and $c_S$ to have different foliations, but the foliations must still coincide at and near each interface. Dealing with interfaces is the main novelty of the paper so allowing different foliations away from the interfaces does not present much novelty to our results.
\end{rem}

We note that convex foliation gives us crucial information on the reflected and transmitted waves emitted when an incident wave hits an interface. Indeed, define
\begin{align}
	&\begin{aligned}
		\Omega_\tau&=\rho^{-1}\big((\tau,\tau_0]\big),\\
		\Omega^\star_\tau &= \rho^{-1}\big((0,\tau)\big),
	\end{aligned}	
	&
	\Sigma_\tau&= \rho^{-1}(\tau).
\end{align}

Also, let $\Sigma_\tau^\pm$ denote the two sides of the interface, where $(-)$ refers to the outside of $\Omega_\tau$ (facing decreasing $\tau$) and $(+)$ the inside. We also fix such notation for the remainder of the paper. We have the corresponding sets of $\PS$ hyperbolic points $\mathcal H_{\PS}^\pm \subset T^*\Gamma_\pm$ (see \cite[section 4]{HUPolarization} for the relevant definitions). The convexity guarantees that $\mathcal H^+_P \subset \mathcal H_P^-$ with an analogous statement for the $S$ hyperbolic set. Thus, a $P$ wave hitting $\Sigma_\tau$ from below must produce a transmitted $P$ wave. In fact, it must produce a transmitted $S$ wave as well since $c_P > c_S$. The same holds for an $S$ wave hitting $\Gamma$ from below, but a mode conversion in the transmitted wave does not necessarily occur since mode conversions only occurs up to a critical angle. Thus, there is no total internal reflection from below the interfaces.

First, we need several definitions taken from \cite{CHKUUniqueness} extended to the elastic setting.

\begin{definition}\label{def: foliation downward}
A \emph{foliation downward} (resp. \emph{upward}) covector $(x,\xi)$ is one pointing in direction of increasing (resp. decreasing) $\rho$. Define $T^*_{\pm}\Omega$ to be the associated open sets:
\[
T^*_{\pm}\Omega = \set*{(x,\xi) \in T^*\Omega}{\pm \langle \xi, d\rho \rangle > 0}.
\]
Hence, we can speak of covectors $(x,\xi)$ pointing \emph{upward/downward} with respect to the foliation.
\end{definition}

\begin{definition}
A (unit-speed) \emph{broken geodesic} in $(\overline\Omega,c_{\PS})$ is a continuous, piecewise smooth path $\gamma: \RR \supset I \to \overline\Omega$ such that each smooth piece is a unit-speed geodesic with respect to either $g_P$ or $g_S$ on $\overline\Omega \setminus \Gamma$, intersecting the interfaces $\Gamma$ at discrete set of points $t_i \in I$. Furthermore, at each $t_i$ the intersection is transversal and Snell's law for reflection and refraction of elastic waves is satisfied. A \emph{broken bicharacteristic} is a path in $T^*\Omega$ of the form $(\gamma,\gamma'^\flat)$, the flat operation taken with respect to $g_P$ or $g_S$ as appropriate. Note that a broken geodesic defined this way may contain both $P$ and $S$ geodesic segments.

A \emph{transmitted broken geodesic} (a concatenation of smooth $P$ and $S$ geodesics) is a unit-speed broken geodesic experiencing only refractions; that is, the inner products of $\gamma'(t_i^-)$ and $\gamma'(t_i^+)$ with the normal to $\Gamma$ have identical signs at each $t_i$. A \emph{transmitted broken bicharacteristic} is then defined analogously.
\end{definition}

\begin{definition}
Let $(x,\xi) \in T^*_+ \overline \Omega \setminus 0$, and $\tau = \rho(x)$. If there exists a purely transmitted bicharacteristic $\gamma$ (with either only $P$ or only $S$ branches) and $\lim_{t \to 0^+}\gamma(t) = (x,\xi)$, we define the \emph{subsurface travel time} $l_{\PS,\tau}(x,\xi)$ as the unique $l>0$ for which $\gamma(l) \in T_-^*\Omega \cap T^* \Sigma_\tau$, and the (\emph{subsurface}) lens relation $L_{\PS,\tau}(x,\xi) = \gamma(l)$.

If $\mathcal{D}_{\PS}$ is the set of $(x,\xi)$ for which such $\gamma$ exists, extend $L_{\PS}$ to $(\overline{\mathcal D_{\PS}}\setminus 0) \setminus T^*\Omega |_{\Gamma}$ by continuity. On the interfaces $T^*\Omega|_\Gamma$, define $L_{\PS}$ by continuity from below.
\end{definition}

\begin{definition}
Let $\Omega_r\subseteq\Omega$ be the set of \emph{regular points}, where $x$ is regular if it is regular with respect to both $c_P$ and $c_S$, as defined in \cite[Definition 3.2]{CHKUUniqueness}.
\end{definition}
Essentially, $x \in \Omega_r$ means that there is a purely transmitted broken $P$ and $S$ geodesic that starts normal to $\p \Omega$ and passes through $x$.
We do not go into detail on the definition of $\Omega_r$ since due to the convex foliation assumption, it is a dense set in $\Omega$, which is all that we use in our proofs:
\begin{lemma}
If $\,\Omega$ is compact, then $\Omega_r$ is dense in $\Omega$ under the convex foliation assumption.
\end{lemma}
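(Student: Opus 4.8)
The plan is to reduce to a single wave speed and then recombine by a Baire-category argument. Write $\Omega_r^P$ (resp. $\Omega_r^S$) for the set of points regular with respect to $c_P$ (resp. $c_S$) alone, so that $\Omega_r=\Omega_r^P\cap\Omega_r^S$. Since $\Omega$ is locally compact and Hausdorff it is a Baire space; hence if I can show that each of $\Omega_r^P$ and $\Omega_r^S$ is open and dense, then their intersection $\Omega_r$ is automatically dense. This is exactly where the hypothesis that a single $\rho$ is a convex foliation for both speeds is used: it lets me run one and the same argument for each speed, the only change being whether the flow is taken with respect to $g_P$ or $g_S$.

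Fix one speed, say $c_P$, and consider the transmitted normal geodesics: for $p\in\p\Omega$ let $\gamma_p$ be the unit-speed $g_P$-geodesic issuing from $p$ conormally to $\p\Omega=\Sigma_0$ and continued as a purely transmitted broken geodesic. Openness of $\Omega_r^P$ is the easy half. If $x=\gamma_p(t_0)$ is reached transversally — meaning $x\notin\Gamma$, $d\rho\neq 0$ at $x$, all interface crossings along $\gamma_p$ up to time $t_0$ are transversal and strictly below the critical angle, and $t_0$ is not a conjugate time — then smooth dependence of the transmitted flow on initial data, together with the transversal non-tangential contact, shows that every point near $x$ equals $\gamma_{p'}(t')$ for nearby $(p',t')$; hence a whole neighborhood of $x$ lies in $\Omega_r^P$.

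The substance is density, which I would prove by a layer-stripping/continuity argument along the foliation parameter $\tau$. The convex foliation controls $\gamma_p$ through the function $f(t)=\rho(\gamma_p(t))$: strict convexity of the leaves forces every critical point of $f$ to be a strict maximum, so $f$ increases to a single turning depth and then decreases, while the no-total-internal-reflection property noted above ($\mathcal H_P^+\subset\mathcal H_P^-$) guarantees that a downward transmitted $P$-geodesic continues as a transmitted $P$-geodesic across each interface, and likewise on the way back up. I would then show, by induction on the finitely many layers cut out by the interfaces, that the transmitted normal geodesics sweep out $\Sigma_\tau\setminus\Gamma$ densely for every $\tau\in(0,\tau_0)$: near $\p\Omega$ this is the statement that the normal exponential map is a diffeomorphism onto a collar (semigeodesic coordinates), and convexity propagates reachability downward because a transversal crossing of $\Sigma_\tau$ persists for the nearby leaves $\Sigma_{\tau'}$. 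Removing the exceptional set — the interfaces $\Gamma$, the turning locus where $\gamma_p$ is tangent to a leaf, the critical-angle locus at interfaces, and the caustic set, which has measure zero by Sard's theorem applied to the (piecewise smooth) transmitted normal-geodesic map — leaves a dense set of reached points inside $\Omega_r^P$.

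The main obstacle is precisely this penetration statement. Because the foliation condition explicitly permits conjugate points and caustics, I cannot invoke a global diffeomorphism, and I must rule out an unreached ``core'', i.e. an open set missed by all normal geodesics. The intended mechanism is to use strict convexity to show that the set of depths reached densely by transversal crossings is both open and closed in $(0,\tau_0)$, hence all of $(0,\tau_0)$; the delicate points are that turning points are strict maxima, so the reachable depths cannot stall below $\tau_0$, and that the caustic, tangency, and critical-angle loci are genuinely of measure zero, so that density survives their removal. Once openness and density are established for each speed, intersecting the two dense open sets $\Omega_r^P$ and $\Omega_r^S$ completes the proof.
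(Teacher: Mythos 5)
Your overall architecture coincides with the paper's: the paper's entire proof is the single sentence ``apply Lemma 3.3 of the acoustic uniqueness paper to both the $P$ and the $S$ speed,'' i.e.\ it reduces to the single-speed density statement and intersects, exactly as you do. You go further in two ways, both to your credit: you reconstruct the content of the cited single-speed lemma rather than quoting it, and you notice that intersecting two dense sets is not enough --- one needs each of $\Omega_r^P,\Omega_r^S$ to contain an open (or at least residual) dense subset --- a point the paper's one-line proof silently elides.

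The one place your sketch rests on a false justification is the downward-penetration step. You invoke the inclusion $\mathcal H^+_\PS\subset\mathcal H^-_\PS$ to conclude that ``a downward transmitted $P$-geodesic continues as a transmitted $P$-geodesic across each interface.'' That inclusion says a covector hyperbolic on the deep side of a leaf is hyperbolic on the shallow side, which is why the paper uses it only for waves hitting an interface \emph{from below}: it rules out total internal reflection on the way \emph{up}. It gives no information about downgoing incidence; if $c_\PS$ jumps upward across an interface (the typical geophysical situation), a downgoing normal geodesic can arrive beyond the critical angle and fail to transmit, and if this happens over an open patch of the interface then the region beneath is missed by every nearby normal geodesic --- deleting a measure-zero exceptional set does not repair that. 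This is precisely the ``unreached core'' you flag as the main obstacle, and your open--closed argument in the depth parameter $\tau$ does not resolve it, since the set of densely reached depths can fail to be open exactly at an interface where downgoing total internal reflection occurs. Whatever closes this gap (e.g., working with the full family of nearly normal purely transmitted rays and using convexity of the leaves, which are themselves level sets of $\rho$, to show the transmitted family still sweeps out a dense set just below each interface) is the actual content of the cited Lemma 3.3, and it is the one ingredient your proposal asserts rather than proves.
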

The proof is by applying Lemma 3.3 in~\cite{CHKUUniqueness} to both $P$ and $S$ speeds.
\\

Since the proof of the main theorem is microlocal, we must first construct a parametrix for the elastic operator when the Lam\'{e} parameters are piecewise smooth.
\section{Elastic-wave parametrix with scattering}
In this section, we construct the elastic wave parametrix in the presence of singularities in the Lam\'{e} parameters. Most constructions are taken directly from \cite{CHKUControl} used in the acoustic setting and \cite{RachBoundary} in the elastic setting. Other references to construct such parametrices are \cite{Hansen-CPDEInverse} in an acoustic setting and \cite{Dencker-Polarization} for a general systems setting.

 Let us first recall the interfaces $\Gamma_i$, with $\Gamma = \bigcup \Gamma_i$. These hypersurfaces separate $\RR^3\setminus \Gamma$ into disjoint components $\{\Omega_j\}$. We assume each smooth piece of $\lambda$ and $\mu$ extends smoothly to $\RR^3$. In order to distinguish the sides of each hypersurface $\Gamma_i$, consider an \emph{exploded space} $Z$ in which the connected components of $\RR^3 \setminus \Gamma$ are separate. It may be defined in terms of its closure, as a disjoint union
\[
\bar Z = \bigsqcup_j \overline \Omega_j, \qquad \qquad Z= \bigcup_j \Omega_j \subset \overline Z.
\]
In this way, $\p Z$ contains two copies of each $\Gamma_i$, one for each adjoining $\Omega_j$.

When restricting to a particular $\Omega_j$, we may do a microlocal decomposition into the forward and backward propagators as in the acoustic case \cite{Rach00,RachBoundary}. This is because away from the interfaces, $-L$ is a positive elliptic operator with a pseudodifferential square root. See \cite{StolkThesis} for a microlocal construction of this square root. Hence, the construction in \cite[appendix A]{CHKUControl} applies, so for Cauchy data $(f_0,f_1)$ (time $t=0$ say), the Cauchy to solution map may then be decomposed as
\[
F(f_0,f_1) \equiv F^+g_++F^-g_-, \qquad \qquad \col{g_+\\ h_-}
\equiv C \col{f_0\\f_1}
\]
where $C$ is a microlocally invertible matrix $\Psi$DO. The Cauchy data $(g_+,g_-)$ may be interpreted as a single distribution $g$ on a doubled space $\mathbf Z = Z_+ \sqcup Z_-$. The corresponding layers are then $\Omega_{\pm,j}$.

Combining the elastic parametrix construction in Rachele \cite{Rach00} with the scalar wave parametrix in the presence of singularities in the sound speed \cite{CHKUControl}, we may construct a parametrix for $R_T$ in regions where no glancing occurs at an interface. We will describe it as a sum of graph FIOs on $\mathbf Z$ from sequences of reflections, transmissions, and $\PS$ mode conversions, along with operators propagating data from one boundary to another, or propagating the initial data to boundary data.\\

\subsection{Cauchy propagators} To begin, extend each restriction $\mu_j = \smash{\mu\big|_{\Omega_j}}$, $\lambda_j = \smash{\lambda\big|_{\Omega_j}}$ to a smooth function on $\RR^3$. Each $\eta \in T^*\Omega_{\pm,j}$ is associated with a unique $\PS$-bicharacteristic $\gamma^{\PS}_\eta(t)$ in $T^*\RR^3$ passing through $\eta$ at $t=0$, which may escape and possibly re-enter $\Omega_{\pm,j}$, as $t \to \pm \infty$.

To prevent re-entry of wavefronts, we introduce a pseudodifferential cutoff for $\PS$ rays, $\phi^{\PS}(t,x,\xi)$, omitting some details for brevity. Let $t^{\PS}_{e\pm},t^{\PS}_{r\pm}$ denote the first positive and negative escape and reentry times for the $\PS$-ray. We let $\phi^{\PS}(t,\gamma^{\PS}_\eta(t))$ be identically one on $[t^{\PS}_{e-},t^{\PS}_{e+}]$ and supported in $(t^{\PS}_{r-},t^{\PS}_{r+})$. One then modifies $\phi^{\PS}$ on a small neighborhood of $\RR \times T^*\p \Omega_{\pm,j}$ (the glancing $\PS$ rays) to ensure it is smooth.

We then recall the construction of the Cauchy propagators $E_j^{\pm}$ (with $\pm$ corresponding to ``forward'' and ``backward'' propagators) described in detail in \cite{Rach00}:
\begin{equation}\label{e: Cauchy propagator}
(E_j^{\pm} h)_l = \sum_{\PS}\sum_{m=1}^3 (2\pi)^{-3}\left[
\int e^{i\varphi^{\pm}_{\PS}}e^{lm}_{\PS,\pm}\hat h_{m,\pm}(\xi) d\xi
\right]
\end{equation}
with phase functions $\varphi_{\PS}(t,x,\xi)$ and vector-valued amplitudes $e^{lm}_{\PS,\pm}(t,x,\xi)$.

Finally, let $J\CtoS$ be the restriction of $\phi \circ E^{\pm}_j$
 defined by
 \[
 \phi \circ E^{\pm}_j
 :=\sum_{\PS}\sum_{m=1}^3 (2\pi)^{-3}
 \phi^{\PS}(t,x,D_x)\left[
\int e^{i\varphi^{\pm}_{\PS}}e^{lm}_{\PS,\pm}\hat h_{m,\pm}(\xi) d\xi
\right]
 \]
 to $\RR \times \Omega_{\pm,j}$; this is the desired reflectionless propagator.

We also require a variant, denoted $J_{\mathbf C \to \mathbf S+}$, of $J\CtoS$ in which waves travel only forward in time. For this, replace $\phi^{\PS}$ with some $\phi^{\PS}_+$ supported in $(t^{\PS}_{e-},t^{\PS}_{r+})$ and equal to $1$ on $[0,t_{e+}]$. Restricting $J_{\mathbf C \to \mathbf S+}$ to the boundary, we obtain the \emph{Cauchy-to-boundary} map $J\CtoB = J_{\mathbf C \to \mathbf S+}|_{\RR \times \p \mathbf Z}$. One may also construct the boundary-to-solution map, denoted $J\BtoS$, analogous to the above using the construction in \cite{RachBoundary} for the smooth Lam\'{e} parameter case.

 As in \cite[Appendix]{CHKUControl}, $J\BtoS,J_{\mathbf{C}\to \mathbf{S}+} \in I^{-1/4}(\mathbf Z \to \RR \times \mathbf Z)$, and $J\CtoB \in I^0(\mathbf Z \to \RR \times \p \mathbf Z)$. Also, $J\BtoS,J_{\mathbf C\to \mathbf{S}+}$ are parametrices for the elastic equation when applied to $u$ such that $\text{WF}(u)$ lies in an open set $\mathcal V \subset T^*\mathbf Z$ whose $\PS$-bicharacteristics are sufficiently far from glancing. The near-glancing covector set, denoted $\mathcal W$, is $T^*\mathbf Z \setminus \mathcal V$. \\

\subsection{$\PS$-Mode projectors} Since we are in the elastic setting, it will be useful to define microlocal projections $\Pi_{\PS}$ that microlocally project an elastic wavefield $u$ to the respective $P$ and $S$ characteristic sets. Locally and for small times, from (\ref{e: Cauchy propagator}), $u$ has a representation
\[
u = \sum_{\PS}\sum_{m=1}^3 (2\pi)^{-3}\left[
\int e^{i\varphi_{\PS}}e^{lm}_{\PS}\hat h_m(\xi) d\xi
\right]
\]
and so we define
\[
\Pi_P u = \sum_{m=1}^3(2\pi)^{-3}\int e^{i\varphi_{P}}e^{lm}_{P}\hat h_m(\xi) d\xi.
\]
$\Pi_S$ is defined analogously. These definitions can be made global, although it is technically not necessary in our case since our analysis is done near the characteristic set of the elastic operator.
\\

\subsection{Boundary propagators} Outgoing solutions from boundary data $f \in \mathcal D'(\RR \times \mathbf Z)$ may be obtained by microlocally converting boundary data to Cauchy data, then applying $J\CtoS$ as explained in \cite{CHKUControl}. We give a cursory overview of the construction, which translates easily to the elastic setting. The boundary-to-Cauchy conversion can be achieved by applying a microlocal inverse of $J\CtoB$, conjugated by the time-reflecting map $S_s\colon t \to s-t$ for an appropriate $s$. Let $x=(x',x_3)$ be boundary normal coordinates near $\p \Omega_{\pm,j}$. Near any covector $\beta = (t,x'; \tau, \xi') \in T^*\p\Omega_{\pm,j}$ in the hyperbolic region $|\tau| > c^{\PS}_j|\xi'|$, there exists a unique $\PS$-bicharacteristic $\gamma$ passing through $\beta$ and lying inside $\Omega_{\pm,j}$ in some time interval $[s,t), s<t.$\footnote{That is, its projection to $T^*\p \Omega_{\pm,j}$ when it hits $\p T^* \Omega_{\pm,j}$ is $\beta$, but we abuse notation.} Then $J\BtoS$ may be defined by $S_sJ\CtoS J^{-1}\CtoB S_s$ microlocally near $\beta$. The inverse can be seen to exist microlocally away from glancing by ``diagonalizing'' the Cauchy propagators as done in \cite{StolkThesis} and applying the same construction of the scalar wave setting in \cite{CHKUControl}.

On the elliptic region $|\tau|<c_j^{\PS}|\xi'|$ define $J\BtoS$ as a parametrix for the elliptic boundary problem. This may be constructed even in the systems setting as shown in \cite{Taylor75}. Applying a microlocal partition of unity, we obtain a global definition of $J \BtoS$ away from a neighborhood of both $\PS$ glancing regions $|\tau|=c^{\PS}_j|\xi'|$. It can be proven that $J\BtoS \in I^{-1/4}(\RR \times \p \mathbf Z \to \RR \times \mathbf Z)$. Its restriction to the boundary $r_\p \circ J\BtoS$ consists of a pseudodifferential operator equal to the identity on $\mathcal W$ and an elliptic graph FIO $J\BtoB \in I^0(\RR \times \p \mathbf Z \to \RR \times \p \mathbf Z)$ describing waves traveling from one boundary to another.
\\

\subsection{Reflection and transmission} It is well known that trasmitted and reflected waves arise from requiring a weak solution and its normal traction to be $C^0$ at interfaces. Given incoming boundary data $f \in \mathcal E'(\RR \times \p \mathbf Z; \mathbb{C}^3)$ (an image of $J\CtoB$ or $J\BtoB$) microsupported near $\beta$, we seek data $f_R,f_T$ satisfying the interface constraints
\begin{equation*}
f + f_R \equiv \iota f_T,
\end{equation*}
\vskip-3em
\begin{multline*}
\mathcal (\lambda\subin\div(vJ\BtoS vf + J\BtoS f_R))\text{Id}
+2\mu\subin \nabla_s(vJ\BtoS vf + J\BtoS f_R)) \cdot \eta|_{\RR \times \p \mathbf Z}\\
 \equiv \iota (\lambda\subout\div( J\BtoS f_T) Id +2\mu\subout\nabla_s J\BtoS f_T)\cdot \eta|_{\RR \times \p \mathbf Z}
\end{multline*}
Here, $v$ is time-reversal, so $v J\BtoS v$ is the outgoing solution that generated $f$. The map $\iota: \p \mathbf Z \to \p \mathbf Z$ reverses the copies of each boundary component within $\p \mathbf Z$, and $\eta$ denotes the unit normal vector to the interface in question. The subscripts \textit{in} and \textit{out} merely denote which side of the interface one is considering in the Lam\'{e} parameters.

The second equation above simplifies to a pseudodifferential equation
\[
N_1 f + N_R f_R \equiv N_T f_T
\]
with operators $N_1,N_R, N_T \in \Psi^1(\RR \times \p Z; \mathbb{C}^3)$ that may be explicitly computed. The system may be microlocally inverted to recover $f_R = M_R f, f_T = M_T f$ in terms of pseudodifferential reflection and transmission operators $M_R, \iota M_T \in \Psi^0(\RR \times \p \mathbf Z; \mathbb{C}^3)$. In order to compute these operators,
we will use the traction formulation of the interface conditions, which will allow us to use symplectic properties in order to compute and study $M_R$ and $M_T$.

\subsubsection*{Calculating reflection and transmission PsiDO's}

Here, we present the traction representation of the elastic PDE in order to simplify many of the computations, by using the symplectic properties to find inverses. A more general detailed construction may be found in \cite[section 3]{StolkThesis}.

Let us do our analysis locally near an interface $\Gamma$ with boundary normal coordinates such that $\Gamma$ is given by $x_3 = 0$. If we use the traction formulation, and the unit normal $\nu = [0\;\; \;0\;\;\; 1]^\top$, the traction components are
$$ \t_j = \mathbb{C}\nabla_s u \cdot e_j,$$
where $e_j$ are the Euclidean basis vectors, $\mathbb{C}$ is the elastic tensor, and the PDE reads
\begin{align*}
\p^2_t u &= \div \col{\t_1\\\t_2\\\t_2} = \p_{x_1}\t_1 + \p_{x_2}\t_2 + \p_{x_3}\t_3.
\end{align*}
 Since we are only interested in a principal symbol calculation (this is enough since obtaining the lower order terms is quite standard in the literature) we sometimes replace tangential derivatives, $\p_t,\p_{x_1},\p_{x_2}$ by $-i\tau,-i\xi_1,-i\xi_2$ respectively to ease the notation.
Then the PDE can be put into the form
\begin{equation}
\p_{x_3}\col{u\\\t_3} = A(x,D'_x,D_t)\col{u\\ \t_3} = \bmat{a_{11} & a_{12} \\ a_{21} & a_{11}^T} \col{u\\\t_3}.
\end{equation}
Here, we have principal symbols
\begin{align*}
 a_{11} &= \frac{1}{i}\bmat{0&0&\xi_1\\0&0&\xi_2\\\alpha \xi_1&\alpha \xi_2& 0},
\qquad a_{12} = \bmat{\mu^{-1} &0&0\\0&\mu^{-1}&0\\0&0&(\lambda + 2\mu)^{-1}},
 \\
a_{21} &= \bmat{ \beta_1\xi_1^2 + \mu \xi_2^2 - \tau^2 & \xi_1\xi_2\beta_2&0\\
\xi_1\xi_2 \beta_2 & \mu \xi_1^2 + \beta_1\xi_2^2-\tau^2 & 0 \\ 0 &0&-\tau^2}
\end{align*}
where
$$ \alpha = \frac{\lambda}{\lambda+2\mu}, \qquad \beta_1 = 4\mu \frac{\lambda+\mu}{\lambda+2\mu}, \qquad\beta_2 = \mu\frac{3\lambda+2\mu}{\lambda+2\mu}.$$

If $u^{(j)}$, $j=1,2$ represents $u$ on each side of the interface, then the interface conditions become simply
\begin{align*}
u^{(1)} &= u^{(2)} \quad \text{ on } \Gamma\\
\mathbf t_3^{(1)} &= \mathbf t_3^{(2)} \quad \text{ on } \Gamma.
\end{align*}
Let us denote $U= [u\;\;\; \mathbf t_3]^\top$.
As shown in \cite[section 3]{StolkThesis}\cite{HUPolarization}, there is an elliptic matrix pseudodifferential operator in $\Psi^0$ denoted $S(x,D',D_t)$ with microlocal inverse $S^-$ that diagonalizes $A$:
\[
A(x,D',D_t) = S(x,D',D_t) \text{diag}(C^+_p,C^+_s,C^-_p,C^-_s) S^-(x,D',D_t)
\]
where $C_{p}^\pm$ are scalar operators in $\Psi^1$ corresponding to the incoming and outgoing $P$ waves, and $C_s^\pm$ is a diagonal $2 \times 2$ pseudodifferential operator corresponding to the incoming and outgoing $s$ waves.

We can denote the columns of $S$ by $S_{\PS,\pm}$ in correspondence with the diagonal matrix, so that the modes are exactly $V_\mu := (S^-_{\mu a}U_a^{\ })_{a=1}^6$ where $S^-_{\mu a}$ is a parametrix for $S_{\mu a}$, $\mu$ will correspond to a $(\PS,\pm)$ pair and the subscript $\mu a$ refers to a matrix entry. Denote by $S^{(1)}, S^{(2)}$ the matrix $S$ on either side of the interface. We will refer to the plus $(+)$ as the in-modes, or modes for which the amplitude is known, that is, the incoming hyperbolic and the growing elliptic modes, which are the first $3$ columns of $S$. The minus $(-)$ will then be the outgoing modes and are the last $3$ columns. This indexing of $(\pm)$ here is unrelated to the $\pm$ indexing we used earlier to describe the opposite sides of an interface.

With this notation, the interface conditions read
\[
  S^{(1)}V^{(1)} = S^{(2)}V^{(2)}  \text{ on } \Gamma.
\]
 The first three components of $V^{(1)}$, say $v_I^{(1)}$, represent the incident $\PS$-modes, and the interface conditions become satisfied by choosing
\[
 V^{(1)} = \col{ v_I^{(1)}\\ R(x',D',D_t)v_I^{(1)}}, \qquad  V^{(2)} = \col{ T(x',D',D_t)v_I^{(1)}\\ 0 }
\]
for some ``reflection'' and ``transmission'' operators $R,T \in \Psi^0(\RR_t \times \Gamma; \mathbb{C}^3)$.

Thus, we obtain
\[
\col{ v_I^{(1)}\\ Rv_I^{(1)}} = (S^{(1)})^{-1}S^{(2)}\col{ Tv_I^{(1)}\\ 0 } =: Q\col{ Tv_I^{(1)}\\ 0 }.
\]
So writing $Q = \bmat{Q_{11} & Q_{12} \\ Q_{21}&Q_{22}}$ we obtain the two equations
\begin{align*}
I = Q_{11}T \text{ and } R = Q_{21}T
\end{align*}
If we can show that $Q_{11}$ is microlocally invertible, we would obtain $T = Q_{11}^{-1}$ and $R = Q_{21}Q_{11}^{-1}$. We will show in Appendix \ref{app: refl/trans} that $R$ and $T$ (and hence $M_R,M_T$ introduced before) are actually elliptic except on the joint $P$ and $S$ elliptic set on both sides of the interface. Note that $M_R,M_T$ only differ by $R,T$ by applications of elliptic operators, so ellipticity is not affected. Specifically, one has $M_R = S^{(1)} R (S^{(1)})^{-1}$ microlocally and likewise for $M_T$ using $S^{(2)}$.

\subsection*{Significance and list of operators}
Since we have so many symbols and operators, let us summarize them for quick reference.

\begin{center}
	\renewcommand\arraystretch{1.2}
    \begin{tabular}{c L{3.5cm} L{11.5cm}}
    \toprule
    \bfseries Operator & \bfseries Name & \bfseries Summary \\
    \midrule
    $J\CtoS$ &Cauchy to solution operator&
    Propagator mapping Cauchy data to the corresponding solution of the homogeneous elastic wave equation. \\
    $J_{\mathbf{C} \to \mathbf{S}+}$&
    forward Cauchy to solution operator&
    Similar to $J\CtoS$, but only propagates waves forward in time.
    \\
    $J\CtoB$&
    Cauchy to boundary map&
    Restriction of $J_{\mathbf C \to\mathbf S}$ to the boundary, which includes each side of an interface.
    \\
    $J_{\mathbf{C}\to \p+}$&
     forward Cauchy to boundary map&
     As $J\CtoB$, but with only waves that travel forward in time.
    \\
    $J\BtoS$ &
    boundary to solution map &
    Maps boundary data (associated with specific side of an interface) to a wave solution in the interior, traveling forward in time.
    \\
    $\Pi_{\PS}$ &
    $\PS$ projectors&
    Microlocal projectors of an elastic wavefield $u$ onto the $\PS$-characteristic set.
    \\
    $J\BtoB$ &
    boundary to boundary map&
    Restriction of $J\BtoS$ to the boundary (which includes interfaces). Hence, it propagates boundary data to the next boundary that the waves intersect.
    \\
    $M_{R/T}$ &
     reflection and transmission operators&
     Zeroth-order PsiDO's at the boundary that act as the reflection/transmission coefficients of the scattered wave from an incident field at an interface.
    \\
    \bottomrule
    \end{tabular}
\end{center}

The construction of the parametrix is now taken directly from \cite[Appendix]{CHKUControl}.\\

\subsection{Parametrix} \qquad First it will be convenient to define $M = M_R + \iota M_T$. With all the necessary components defined, we now set
\begin{align}
\tilde F &= J\CtoS + J\BtoS \sum_{k=0}^\infty(J\BtoB M)^kJ\CtoB \\
\tilde R_{2T} &= r_{2T} \circ \tilde F,
\end{align}
where $r_{2T}$ is restriction to $t=2T$. Again omitting the proof, it can be shown that $\tilde F \equiv F$ and $\tilde R_{2T} \equiv R_{2T}$ away from glancing rays. In the elastic case it means away from both $P$ and $S$ glancing rays; that is, for initial data $h_0$ such that every broken bicharacteristic originating in $\text{WF}(h_0)$ is sufficiently far from glancing. Recalling that $M = M_R + M_T$, we may write $\tilde R_{2T}$ as a sum of graph FIO indexed by sequences of reflections and transmissions:
\begin{align}
&\tilde R_{2T} = \sum_{ s \in \{R,T\}^k} \tilde R_s,
\qquad \qquad
\tilde R_{()} = r_{2T}J\CtoS \\
&\tilde R_{(s_1,\dots,s_k)} = r_{2T}J\BtoS M_{s_k}J\BtoB \cdots M_{s_2}J\BtoB M_{s_1}J\CtoB.
\end{align}
The solution operator $\tilde F$ likewise decomposes into analogous components $\tilde F_a$.

Now that we have a parametrix, we will use it in the next section to obtain certain ``subsurface'' travel time and lens relations.

\section{Proof of Theorem \ref{thm: main theorem}}
In this section, we will prove our main result on the uniqueness of elastic wave speeds under the foliation condition.

A key ingredient in the proof of uniqueness will be the following theorem proved by Stefanov, Uhlmann, and Vasy in \cite{UVLocalRay}.
\begin{theorem}\label{thm: UV local rigidity}
Choose a fixed metric $g_0$ on $\Omega$. Let $n= \text{dim}( \Omega) \geq 3$; let $c,\tilde c >0$ be smooth, and suppose $\p \Omega$ is convex with respect to both $g = c^{-2}g_0$ and $\tilde g = \tilde c^{-2}g_0$ near a fixed $p \in \p \Omega$. If $d_g(p_1,p_2)= d_{\tilde g}(p_1,p_2)$ for $p_1,p_2$ on $\p \Omega$ near $p$, then $c= \tilde c $ in $\Omega$ near $p$.
\end{theorem}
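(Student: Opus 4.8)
The plan is to reduce the conformal boundary rigidity statement to the injectivity of a \emph{local} geodesic X-ray transform near $p$, exploiting that $g$ and $\tilde g$ are both conformal to the fixed background $g_0$. First I would establish boundary determination: the equality $d_g = d_{\tilde g}$ for boundary pairs near $p$ forces the full Taylor jets of $c$ and $\tilde c$ to agree at $p$. This is because short geodesics hugging $\partial\Omega$ near $p$ have lengths that, order by order, encode the normal derivatives of the conformal factor at the boundary, so matching these lengths pins down $\partial^\alpha(c-\tilde c)|_p = 0$ for every multi-index $\alpha$. Consequently $c - \tilde c$ vanishes to infinite order along $\partial\Omega$ near $p$, and the two metrics may be treated as a smooth, boundary-flat perturbation of one another.

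Next I would invoke the pseudolinearization identity of Stefanov--Uhlmann. Writing the difference of the two Hamiltonian flows as an integral along a single reference geodesic, the equality of boundary distance functions becomes the statement that a geodesic X-ray transform of the symmetric $2$-tensor $g - \tilde g = (c^{-2}-\tilde c^{-2})\,g_0$ vanishes, up to an error that is quadratically small in $g-\tilde g$ and its derivatives and is controlled by the already-established agreement of the jets. The crucial simplification is that $g-\tilde g$ is \emph{conformal}: along a unit-speed $g$-geodesic $\gamma$ one has $(g-\tilde g)(\dot\gamma,\dot\gamma) = f\,|\dot\gamma|_{g_0}^2 = f\,c^2$, where $f := c^{-2}-\tilde c^{-2}$, so there is no solenoidal gauge ambiguity and the tensorial transform collapses to a weighted \emph{scalar} X-ray transform of $f$. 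The problem thus reduces to showing that $f$ is determined near $p$ by the vanishing of its weighted local X-ray transform over geodesics close to $\partial\Omega$.

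At this point I would apply the Uhlmann--Vasy local inversion. Using the convexity of $\partial\Omega$ at $p$, one cuts off a thin lens-shaped region $O \subset \Omega$ with $p \in \bar O$ by an artificial strictly concave cap, and works in Melrose's scattering calculus with an exponential weight $e^{-\varrho/x}$ in the defining function $x$ of the artificial boundary. One shows the conjugated, cutoff normal operator $N = I^\ast I$ is elliptic in the scattering calculus on $O$ — this is where $n \geq 3$ and the strict convexity enter — hence invertible modulo smoothing for a sufficiently thin cap. The smooth positive weight $c^2$ does not affect the principal symbol of $N$, so the weighted transform inherits the same ellipticity as the unweighted one; inverting $N$ recovers $f$ on $O$ from the vanishing data, giving $c = \tilde c$ there, i.e. in a neighborhood of $p$ inside $\Omega$.

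The hard part is the last step, and specifically the interplay between the nonlinearity of the pseudolinearization and the merely local, modulo-smoothing nature of the X-ray inversion. I must verify that the quadratic error terms are harmless: because $c$ and $\tilde c$ agree to infinite order at $\partial\Omega$ and the X-ray inversion yields a genuine coercive a priori estimate in scattering Sobolev spaces (not merely injectivity), the error can be absorbed by taking the cap thin enough and closing the argument through a contraction/bootstrap rather than a bare injectivity statement. A secondary subtlety is keeping the geodesics of $g$ and $\tilde g$ uniformly close so that a single reference foliation by convex curves near $p$ governs both transforms; the boundary-jet agreement from the first step is exactly what makes this possible.
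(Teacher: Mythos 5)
This theorem is not proved in the paper at all: it is imported verbatim from Stefanov--Uhlmann--Vasy \cite{UVLocalRay}, so there is no in-paper argument to compare your attempt against. Your outline --- boundary determination of the jets of $c,\tilde c$ at $\partial\Omega$, the Stefanov--Uhlmann pseudolinearization reducing the conformal problem to a weighted \emph{scalar} local geodesic X-ray transform (no solenoidal gauge issue), the Uhlmann--Vasy scattering-calculus inversion near a strictly convex boundary point for $n\ge 3$, and absorption of the quadratic error via the coercive stability estimate on a sufficiently thin lens --- is a faithful summary of the strategy of the cited proof, so it is the same approach as the source the authors rely on.
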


We write down a trivial corollary due to continuity of the distance function.
\begin{cor}\label{cor: rigidity from dense set of point}
Consider the same setup as in the above theorem. If $d_g(p_1,p_2)= d_{\tilde g}(p_1,p_2)$ for a dense set of points $p_1,p_2$ on some neighborhood of $P$ in $\p \Omega$, then $c= \tilde c $ in $\Omega$ near $P$.
\end{cor}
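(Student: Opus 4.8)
The plan is to reduce the statement directly to Theorem~\ref{thm: UV local rigidity} by showing that the hypothesis on a dense set of boundary pairs in fact forces equality of the two boundary distance functions on a \emph{full} neighborhood of $P$. The only analytic input needed is the continuity of the boundary distance functions $d_g$ and $d_{\tilde g}$ as functions on $\partial\Omega \times \partial\Omega$ near $(P,P)$, after which the conclusion is immediate from the theorem.

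First I would fix a neighborhood $U \subset \partial\Omega$ of $P$ small enough that, by the boundary convexity of $\partial\Omega$ with respect to both $g$ and $\tilde g$ near $P$, for any pair $p_1,p_2 \in U$ the distances $d_g(p_1,p_2)$ and $d_{\tilde g}(p_1,p_2)$ are realized by minimizing geodesics confined to a fixed compact neighborhood of $P$ in $\overline\Omega$; on such a set both $d_g$ and $d_{\tilde g}$ are continuous (indeed locally Lipschitz) in $(p_1,p_2)$. Let $D \subset U$ be the given dense subset on which the two distance functions agree. Since $D$ is dense in $U$, the product $D \times D$ is dense in $U \times U$.

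Now the two continuous functions $(p_1,p_2) \mapsto d_g(p_1,p_2)$ and $(p_1,p_2)\mapsto d_{\tilde g}(p_1,p_2)$ agree on the dense subset $D\times D$ of $U\times U$, hence they agree on all of $U\times U$: given arbitrary $p_1,p_2 \in U$, choose sequences in $D$ converging to them and pass to the limit using continuity. Thus $d_g(p_1,p_2)=d_{\tilde g}(p_1,p_2)$ for all $p_1,p_2$ near $P$, which is exactly the hypothesis of Theorem~\ref{thm: UV local rigidity}. Applying that theorem yields $c=\tilde c$ in $\Omega$ near $P$.

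There is essentially no genuine obstacle here, which is why the statement is labeled a trivial corollary; the only point deserving a word of care is the continuity of the boundary distance function near $P$, which is standard once the minimizing geodesics are confined to a compact set by the convexity hypothesis. If one prefers to sidestep the confinement argument, one may simply invoke the global continuity of the intrinsic Riemannian distance and observe that the hypotheses of the theorem are local to $P$, so the density argument applies verbatim on any sufficiently small neighborhood.
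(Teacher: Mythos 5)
Your argument is correct and matches the paper's reasoning exactly: the paper dismisses this as a ``trivial corollary due to continuity of the distance function,'' which is precisely the density-plus-continuity extension you carry out before invoking Theorem~\ref{thm: UV local rigidity}. Your additional care about confining minimizing geodesics to a compact set to justify continuity is a reasonable elaboration of the same idea, not a different route.
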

We need this since due to the multiple scattering in our setting, we will only be able to recover boundary travel times on a dense set of points and not a full neighborhood.

\subsubsection*{Summary of the proof of Theorem \ref{thm: main theorem}}

The proof of the main theorem is technical but the main argument is quite intuitive and geometric. Thus, we provide a summary of the proof that emphasizes the key ideas. Inductively, suppose that we have recovered the Lam\'{e} parameters above $\Sigma_\tau$, $\tau >0$, that is, inside $\Omega_\tau^c$, and let $z \in \Sigma_\tau$. Say we want to use Theorem \ref{thm: UV local rigidity} to recover $c_P$ near $z$ (a similar argument works for $c_S$). Viewing $\Sigma_\tau$ as the boundary of the domain $\Omega_\tau$, we would need to recover the local boundary distance function $d_P|_{\Sigma_\tau \times \Sigma_\tau}$ near $z$ to apply the above theorem. Let $x \in \Sigma_P$ near $z$ where $\Sigma_P$ is the $P$-characteristic set defined earlier, $(x,\xi) \in S_{\Sigma_\tau}^*\overline{\Omega}_\tau$ pointing downward, and $l_{P,\tau}(x,\xi)$ the corresponding boundary travel time that we would like to recover. 

Let $\gamma$ be a purely transmitted $P$-bicharacteristic, entering $\Omega$ at some time $t<0$ and passing through $(x,\xi)$ at time $t=0$. For convenience, let us view $\gamma$ as lying in $T^*(\RR^3 \times \RR_t)$. With appropriate Cauchy data $h_0$ supported outside $\Omega$, we can generate a microlocal $P$-wave whose wavefront set is initially along $\gamma$. Let us denote this wave solution by $u_{h_0}$. By propagation of singularities, $u_{h_0}$ will have each point along $\gamma$ in its wavefront set and two points in particular: $\gamma(0)$, which projects to $(x,\xi)\in T^* \RR^3$, and $\gamma(l_{P,\tau}(x,\xi))$. The problem is that due to the interface and the multiple scattering of $\PS$-waves in the interior that will also lie in $\text{WF}(u_{h_0})$, we cannot uniquely recover $\gamma(l_P(x,\xi))$ in this wavefront set. 
Hence, in addition to $h_0$, we must microlocally construct additional Cauchy data that eliminates this type of multiple scattering. The $h_0$ we construct will in fact be slightly more complicated since we do not want the mode converted transmissions resulting from the initial $P$-wave (see Figures \ref{f: subsurface ray} and \ref{f: multiples}). After this additional ``tail'' is constructed, we will be able to uniquely identify $\gamma(l_{P,\tau}(x,\xi))$ in the wavefront set.

We now turn to the multiple lemmas and propositions involved in proving the main theorem.\\

\begin{figure}
\centering
             \includegraphics[scale=0.8,page=2]{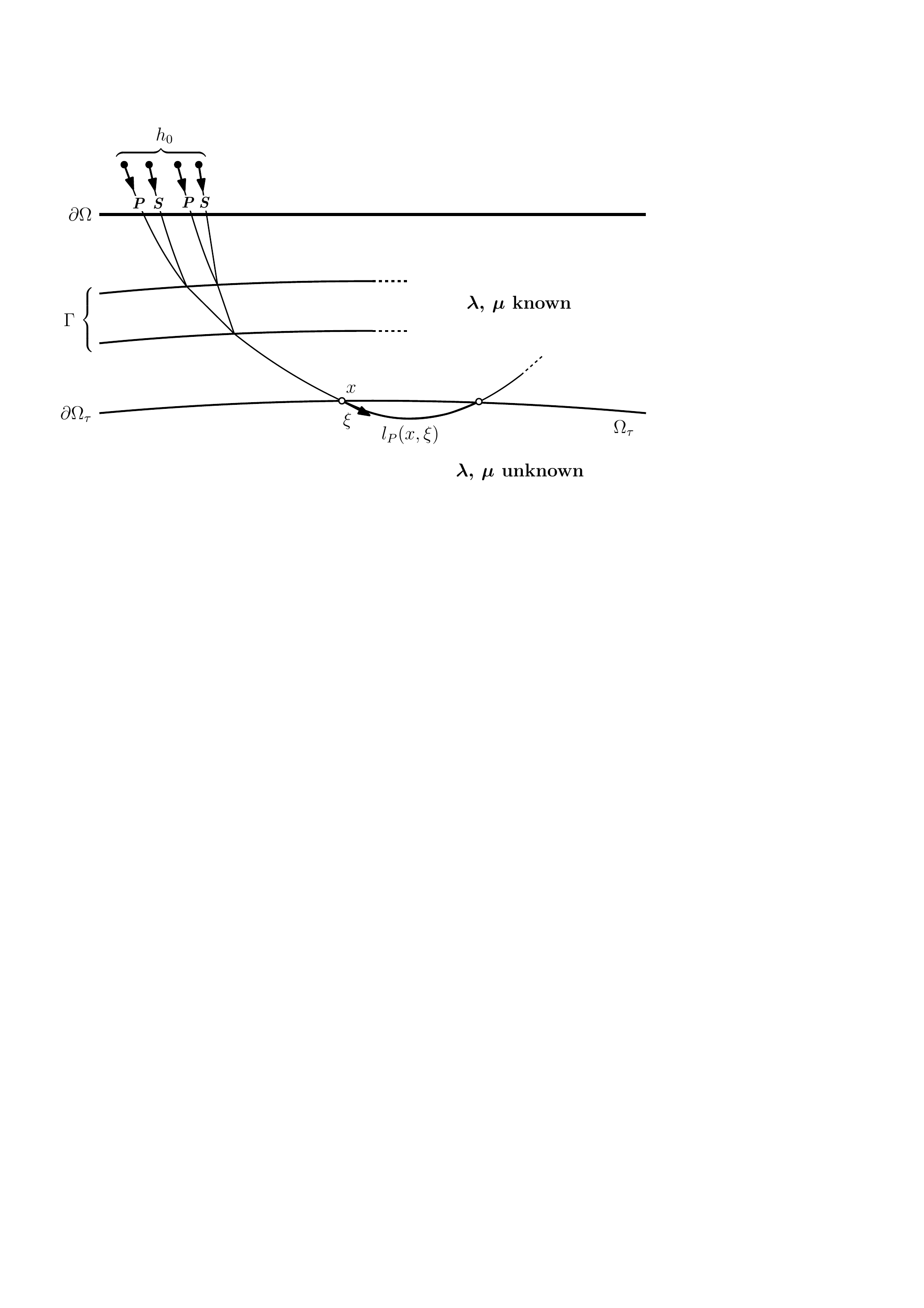}
\caption{With knowledge of the elastic parameters above $x$, it is possible to construct initial data $h_0$ that produces a single $P$ or $S$ ray at almost every covector $(x,\xi)$, here a $P$ ray. However, due to the presence of multiple reflected rays, it is not immediately possible to recover the length $l_{P,\tau}(x,\xi)$.}
\label{f: subsurface ray}
\end{figure}

\begin{figure}
\centering
             \includegraphics[scale=0.8,page=3]{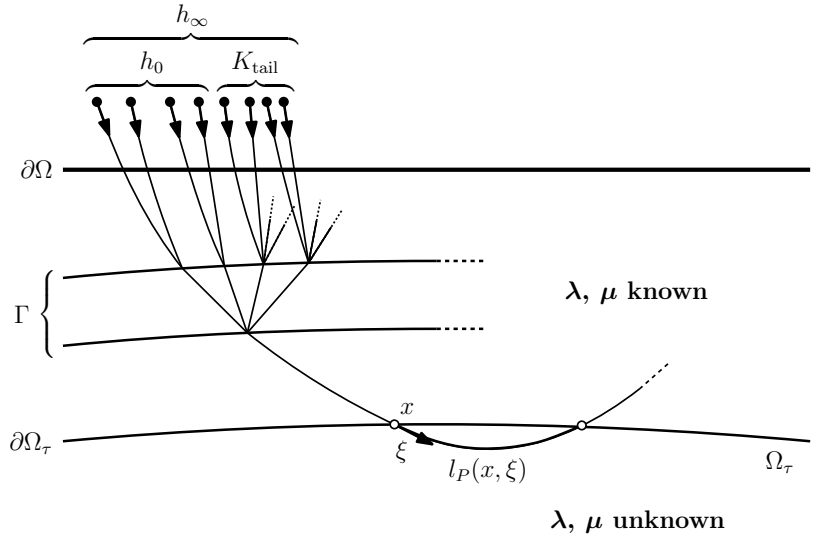}
\caption{By appropriately augmenting the initial data $h_0$ with extra initial data $K\tail$, producing total initial data $h_\infty$, multiple reflections can be suppressed, and $l_{P,\tau}(x,\xi)$ can be recovered from examination of the solution's wavefront set.}
\label{f: multiples}
\end{figure}

Let $\mathcal S \subset T^*\Omega$ be the set of $\xi$ such that every bad bicharacteristic through $\xi$ is \emph{(+)-escapable} (all definitions are in Appendix \ref{s: proving internal source construction}). The set $\mathcal S$ will be dense within an appropriate set, allowing us to work wholly inside $\mathcal S$ (see Lemma \ref{l: mathcal S is big enough}).
We will state a series of propositions and lemmas to prove the main theorem, some of which we prove in Appendix~\ref{s: proving internal source construction}. We first state the following crucial proposition that is at the heart of proving our main theorem and whose proof requires the microlocal analysis of scattering control.
\begin{prop}\label{prop: internal source construction}
Let $(x,\xi) \in \mathcal S$, $\tau=\rho(x)$, and let $v$ be a distribution whose wavefront set is exactly $(x,\RR_+\xi)$. Then there exists Cauchy data $h_\infty$ supported outside $\Omega$ and a large enough time $T>0$ such that $\text{WF}(R_T h_\infty) = \text{WF}(v)$ and
$\text{WF}(R_{T+s} h_\infty) = \text{WF}(R_s v)$ inside $\Omega_\tau$ for all $s \geq 0$.
Moreover, we may arrange that $\text{WF} (u_{h_\infty}) \subset \Sigma_P $ within $\Omega_\tau$ for times $t$ close enough to $T$. The same may be done with $\text{WF}(u_{h_\infty}) \subset \Sigma_S$ instead.
\end{prop}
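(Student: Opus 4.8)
The plan is to realize the single interior packet $v$ as the clean, focused output of a scattering-control construction driven entirely from outside $\Omega$, built on the parametrix $\tilde R_{2T}=\sum_{s}\tilde R_s$ together with the mode projectors $\Pi_{\PS}$. First I would fix $T>0$ large enough that the purely transmitted $P$-bicharacteristic arriving at $(x,\xi)$ at time $t=T$, traced backward through the already-recovered region $\Omega_\tau^c$ above $\Sigma_\tau$, lies outside $\overline\Omega$ at $t=0$. Since the Lam\'e parameters are known on $\Omega_\tau^c$, this backward transmitted branch is completely determined, so I can place conormal Cauchy data $h_0$, supported outside $\Omega$, whose forward evolution $R_T h_0$ contains a single $P$-packet focusing to $(x,\xi)$ at time $T$. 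Because $(x,\xi)\in\mathcal S$ and, by the density Lemma, we may assume we are working wholly inside $\mathcal S$, every bicharacteristic entering the construction can be taken uniformly away from both $P$- and $S$-glancing, so the parametrix of the previous section applies throughout and all wavefront statements are exact.

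Second, I would identify the spurious constituents. As the incoming wave crosses the interfaces of $\Omega_\tau^c$ on its way down, the operators $M_R,M_T$ generate, besides the desired transmitted $P$-branch, reflected branches and $P\!\leftrightarrow\!S$ mode-converted branches; expanding $R_T h_0$ through $\tilde R_{2T}=\sum_s\tilde R_s$ displays these as the terms indexed by nontrivial reflection/transmission words $s$. Exactly those constituents that would descend into $\Omega_\tau$ — in particular the mode-converted transmissions produced by the initial $P$-wave — are the \emph{bad} bicharacteristics that must be removed, while applying $\Pi_P$ (resp.\ $\Pi_S$) pins the focused packet to $\Sigma_P$ (resp.\ $\Sigma_S$), which is what yields $\text{WF}(u_{h_\infty})\subset\Sigma_P$ inside $\Omega_\tau$ for $t$ near $T$.

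Third, I would run the scattering-control iteration to cancel the bad constituents. The crucial point is that, by hypothesis, every bad bicharacteristic through $(x,\xi)$ is $(+)$-escapable: traced backward it leaves $\overline\Omega$ within the available time window, so there is exterior Cauchy data whose forward evolution reproduces precisely that constituent inside $\Omega_\tau$ and can therefore be subtracted. Adjoining a tail $K\tail$ supported outside $\Theta$ and solving the resulting fixed-point equation — the microlocal analogue of $(I-\pi^\star R_{-2T}\pi^\star R_{2T})h_\infty=h_0$ from \cite{CHKUControl} — by a Neumann series, each cancellation pushes the residual error to strictly longer scattering words, hence to packets that either escape $\Omega$ or reach $\Omega_\tau$ only at later generations; the series therefore terminates microlocally at any fixed order inside $\Omega_\tau$. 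Setting $h_\infty=h_0+K\tail$ gives $\text{WF}(R_T h_\infty)=\text{WF}(v)$, and since inside $\Omega_\tau$ the resulting field coincides microlocally with a single free $\PS$-packet governed by the true operator $\Op$, propagation of singularities yields $\text{WF}(R_{T+s}h_\infty)=\text{WF}(R_s v)$ in $\Omega_\tau$ for all $s\ge 0$.

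The main obstacle is the convergence of this iteration in the multi-speed setting: unlike the acoustic case of \cite{CHKUControl}, each interface crossing spawns both reflected and mode-converted descendants, so the bookkeeping of \emph{bad} words $s$ is combinatorially heavier, and one must verify that cancelling a given multiple does not regenerate contamination inside $\Omega_\tau$ at the same microlocal order. This is precisely where the $(+)$-escapability built into the definition of $\mathcal S$ is indispensable, and where the absence of sharp unique continuation forces the whole argument to stay microlocal. I expect the delicate verification that the escaping branches never re-enter $\Omega_\tau$ before the focused packet has delivered its subsurface travel-time information to be the technical heart of the matter, and it is deferred to Appendix~\ref{s: proving internal source construction}.
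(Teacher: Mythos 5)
Your overall strategy is the right one (backward-trace a purely transmitted ray, enumerate the bad branches, cancel them with exterior data along escapable opposite rays), but the cancellation engine you propose is not the one that works, and the paper deliberately avoids it. You solve ``the microlocal analogue of $(I-\pi^\star R_{-2T}\pi^\star R_{2T})h_\infty=h_0$ by a Neumann series.'' That equation is the \emph{blind} scattering control equation of Appendix~\ref{s: blind scatt control}, and its solution only annihilates the component of $R_Th_0$ lying in $\overline{\text{Im}(R_T\pi^\star)+\text{Im}(\nu R_T\pi^\star)}$; combined with elastic unique continuation (which is governed by $c_S$), this yields control only in the $S$-domain of influence $\{d_S(\cdot,\p\Theta)<T\}$. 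A $P$-packet focused at depth $\tau$ generally lies outside that set, which is precisely why the paper declares this route ``quite disparate'' from the main theorem and instead builds the tail \emph{directly}: recursively defined FIOs $\Xi^{I/O}_\pm$ that, at each interface hit by a bad branch, inject an incoming wave along the \emph{opposite} $(\mp)$-escapable bicharacteristic with boundary trace $-M_T^{-1}M_R y'$ (and the companion term $\Xi^I_+(M_T-M_RM_T^{-1}M_R)$ to absorb the newly created outgoing waves). Your claim that the iteration ``pushes the residual to longer words and terminates at any fixed order'' is exactly the statement that needs proof, and it is established not by Neumann-series bookkeeping but by condition (iii) of $(+)$-escapability — existence and non-glancing of \emph{both} $P$ and $S$ opposite escapable rays — together with the microlocal ellipticity of $M_T$ away from the joint elliptic set (Appendix~\ref{app: refl/trans}), neither of which your argument invokes. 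Without the $M_T^{-1}M_R$ opposite-ray construction, no cancellation has actually been performed.

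A second, smaller discrepancy: your $h_0$ is a single conormal packet on the backward-traced $P$-ray, whereas the paper's $h_0=J\CtoB^{-1}M_T^{-1}\prod_{i=1}^d\bigl(J^-\BtoB M_T^{-1}\bigr)v|_{\Gamma}$ carries up to $2^d$ covectors in its wavefront set, chosen so that the directly transmitted constituent $\mathbf{DT}^+_{k,P}h_0$ is exactly a pure $P$-wave at $(x,\xi)$ with no downgoing mode-converted transmissions. You instead fold the downgoing $P\!\to\!S$ transmissions into the ``bad'' set to be killed by the tail; that is admissible in principle, but these are the \emph{bad transmissions} handled by the $\Xi^O_-$, $\mathcal P_{a,\lambda,T}$ terms (new to the elastic setting), so it again lands you on the part of the construction you have deferred rather than supplied. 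Finally, note that the proposition also requires $\text{WF}(R_Th_\infty)=\text{WF}(v)$ with the correct polarization; this comes from the $\Pi_\PS$ projectors inside $\mathbf{DT}^+_{k,\PS}$ and the ellipticity of the relevant entries of $T=Q_{11}^{-1}$, which you assert but do not tie to the explicit computation in Appendix~\ref{app: refl/trans}.
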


\begin{rem}
The time $T$ is very concrete. It is essentially a scalar multiple of the $S$-distance from $x$ to $\p \Theta$. The reason is that we need access to all $S$ wave constituents starting near $x$ that produce branches that eventually return to the surface $\p \Theta$. The details will be made clear in the proof.
\end{rem}

\begin{rem}
The second part of the theorem means that not only can we generate Cauchy data to produce a certain singularity at a given depth, but we may even construct it to be a $P$ or an $S$ wave. This is essential for the uniqueness result since we must be able to recover subsurface lens relations for the $P$ and $S$ speeds separately.
\end{rem}
\emph{(Proof sketch)}
Let $\gamma$ be a purely transmitted $P$-bicharacteristic starting at $\p \Theta$ (when projected to the base space) for $t=0$  and $\gamma(T) = (x,\xi)$. Let $h_0$ be any Cauchy data supported in $\Theta \setminus \Omega$ with wavefront set containing $\RR_+\gamma(0)$ so that inside $\bar \Theta_T$,
$ \text{WF}(R_T h_0) = \text{WF}(v)$ by finite speed of propagation (see Figure \ref{f: subsurface ray}). 
The key now is to construct a tail that eliminates the multiple scattering and justify that such an $h_0$ above can be constructed. See Appendix \ref{s: proving internal source construction} for the remainder of the proof.\\

The next two lemmas are the main technical complications in the elastic setting. When we later show uniqueness via layer stripping, we will be able to layer strip past an interface if the wavespeeds of both Lam\'{e} systems infinitesimally match up just past the interface, even when we do not have direct access past such an interface. To do this, we rely on obtaining the principal symbols of reflection coefficients to recover the infinitesimal jumps in wave speeds past the interfaces. In terms of notation, any symbol with a tilde above it represents the corresponding symbol for the second set of Lam\'{e} parameters, and the superscript \emph{prin} denotes the principal symbol of a pseudodifferential operator.

\begin{lemma}\label{l: recover refl coeffs}
Suppose that $\Sigma_\tau \subset \Gamma$ and $c_{\PS} = \tilde c_{\PS}$ outside $\overline{\Omega}_\tau.$ Assume $\mathcal F = \tilde{\mathcal F}$. Then
\[M\prin_R = \tilde {M}\prin_R
\text{ on } T^*\Sigma^-_\tau.
\]
\end{lemma}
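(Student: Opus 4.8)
The plan is to probe the upper side of the interface $\Sigma_\tau$ with a clean, single incident $P$- or $S$-wave produced by Proposition~\ref{prop: internal source construction}, and to read off $M_R\prin$ from the amplitude of the \emph{primary} reflected wave as it returns to the measurement region through the already-recovered layer $\overline\Omega_\tau^c$. Concretely, fix a hyperbolic covector $\beta \in T^*\Sigma^-_\tau$ and a downward covector $(x,\xi)$ over $\beta$ with $x\in\Sigma_\tau$; using Lemma~\ref{l: mathcal S is big enough} we may assume $(x,\xi)\in\mathcal S$. Proposition~\ref{prop: internal source construction} then supplies Cauchy data $h_\infty$ supported outside $\Omega$ and a time $T$ with $\text{WF}(R_T h_\infty) = (x,\RR_+\xi)$, a single covector, with the arriving wave a pure $P$-mode. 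Because $c_{\PS}=\tilde c_{\PS}$ outside $\overline\Omega_\tau$, the tail only needs to cancel multiples generated in the recovered layer, so the \emph{same} $h_\infty$ serves for $\tilde{\mathcal F}$; moreover the principal symbol $a_{\mathrm{inc}}(\beta)$ of the incident arrival is computed by transmitting $h_0$ downward along a bicharacteristic lying in $\overline\Omega_\tau^c$, where the two media agree, and is therefore identical for both.

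Since $\text{WF}(R_T h_\infty)$ is a single covector, the forward evolution for $t>T$ is governed entirely by propagation of singularities from that covector: the incident wave strikes $\Sigma_\tau$ and produces, to leading order, reflected $P$- and $S$-waves whose symbols at $\beta$ are $M_R\prin(\beta)\,a_{\mathrm{inc}}(\beta)$ (the mode-converted pieces occupying distinct matrix blocks of the symbol). These reflected waves travel back up through $\overline\Omega_\tau^c$ and out to $\overline\Omega^c$. The only other leading contributions are the transmitted waves, which propagate downward into $\Omega_\tau$ and return, if at all, with strictly larger travel time; hence propagation of singularities identifies the primary reflection as the first singularity to arrive along the expected reflected bicharacteristic. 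Writing $\mathcal P_{\mathrm{up}}$ for the operator carrying the reflected Cauchy data just above $\Sigma_\tau$ to the data recorded in $\overline\Omega^c$ — a composition of the boundary propagators $J\BtoS,J\BtoB$ and the (elliptic) transmission operators $M_T$ of the recovered layer, hence an elliptic FIO along these non-glancing rays — we have $\mathcal P_{\mathrm{up}} = \tilde{\mathcal P}_{\mathrm{up}}$, as the coefficients agree there.

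Now $\mathcal F = \tilde{\mathcal F}$ gives $\mathcal F h_\infty = \tilde{\mathcal F} h_\infty$, so the two recorded primary reflections coincide; equating principal symbols yields $\mathcal P_{\mathrm{up}}\big(M_R\prin(\beta)\,a_{\mathrm{inc}}(\beta)\big) = \mathcal P_{\mathrm{up}}\big(\tilde M_R\prin(\beta)\,a_{\mathrm{inc}}(\beta)\big)$, and microlocal injectivity of the elliptic $\mathcal P_{\mathrm{up}}$ forces $M_R\prin(\beta)\,a_{\mathrm{inc}}(\beta) = \tilde M_R\prin(\beta)\,a_{\mathrm{inc}}(\beta)$. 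Repeating with pure incident $S$-waves of each polarization, again available from Proposition~\ref{prop: internal source construction}, lets $a_{\mathrm{inc}}(\beta)$ range over a basis of the incident modes, so the full matrix symbols agree: $M_R\prin(\beta) = \tilde M_R\prin(\beta)$. As $\beta$ was an arbitrary hyperbolic covector, and $M_R\prin$ is continuous and elliptic off the joint $P$–$S$ elliptic set by Appendix~\ref{app: refl/trans}, the equality extends to all of $T^*\Sigma^-_\tau$ by continuity. The main obstacle is precisely this isolation step: one must guarantee, via the scattering-control construction, that the primary reflected wave is neither masked by internal multiples nor contaminated by the unknown medium below $\Sigma_\tau$, and that its symbol genuinely realizes the reflection coefficient acting on the incident symbol — this is where the single-covector wavefront set of $R_T h_\infty$ and the ellipticity of $\mathcal P_{\mathrm{up}}$ are indispensable.
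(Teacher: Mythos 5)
Your argument is correct, and it rests on the same two pillars as the paper's: the media coincide above $\Sigma_\tau$, so both the downward transport of the incident symbol and the upward transport of the reflected symbol are realized by the \emph{same} elliptic FIOs for both media, and the primary reflection is separated from everything returning from below $\Sigma_\tau$ by strict positivity of travel time. The route differs in one genuine respect. The paper does not invoke Proposition~\ref{prop: internal source construction} here at all: it argues by induction on the interfaces above $\Sigma_\tau$, takes a bare single-covector source $h$ (no tail), and identifies the first-primary-reflection constituent in the graph-FIO decomposition of the parametrix, namely $M_R (J\BtoB M_T)^{k-1} J\BtoB J\CtoB h$; since $u_h\equiv \tilde u_h$ on $\Omega_\tau^\star$ by propagation of singularities and every factor except $M_R$ is common to both media, the factors are peeled off by microlocal ellipticity. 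The multiples generated in the known layer need not be cancelled because they are identical for the two media and drop out of the comparison $\mathcal F h=\tilde{\mathcal F}h$. Your use of the scattering-control tail is therefore heavier machinery than necessary, but it is legitimate: the tail for a target covector on $\Sigma_\tau$ is built entirely from reflection/transmission and propagation operators in $\Omega_\tau^\star$, so it is medium-independent and there is no circularity with the proposition's proof; it buys you a cleaner isolation of the incident arrival at the price of the escapability hypotheses ($(x,\xi)\in\mathcal S$, density via Lemma~\ref{l: mathcal S is big enough}) that the paper's version of this lemma does not need. Two small caveats: you correctly note that one must run the argument over a basis of incident modes ($P$ plus both $S$ polarizations) to get the full $3\times 3$ matrix symbol, which you handle more explicitly than the paper does; and the final "extension by continuity to all of $T^*\Sigma_\tau^-$" does not actually reach the joint elliptic region from the hyperbolic one, but the paper has the same limitation and only ever uses $M\prin_R$ at hyperbolic covectors, so nothing downstream is affected.
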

\begin{rem}
The proof actually shows that one may recover the full symbol, but it is unnecessary in our analysis.
\end{rem}
\proof This is essentially an inductive argument, whereby we recover the coefficients at each successive interface using appropriate sources. Let $\Sigma_{\tau_1}$ denote the first interface, and suppose $c_{\PS}=\tilde c_{\PS}$ outside $\Omega_{\tau_1}$.
Since $\rho = 1$, both elastic operators $\Op,\tilde \Op$ (see \S\ref{s: basic setup} for notation) agree on $\Omega_{\tau_1}^\star$. Combining this with $\mathcal F = \tilde{\mathcal F}$, propagation of singularities, and the convex foliation assumption to ensure no trapped rays, then $u_h \equiv \tilde u_h$ in $\Omega_{\tau_1}^\star$ for $h \in \mathbf C$.\footnote{In fact, we can use unique continuation to obtain the same result since we are allowed to measure outside $\Theta$ for an unlimited amount of time. Nevertheless, this is overkill for what we need here, which is a microlocal equivalence.} By taking a limit to $\Sigma_{\tau_1}^-$, we get $M_R J\CtoB h = \tilde{M}_R J\CtoB h$. By considering $h$ to be a $P$ wave and then picking $h$ to be an $S$ wave, we obtain the desired claim since we only need $M_R$ in the hyperbolic regions where $J\CtoB$ is elliptic and so we may generate microlocal $P$ and $S$ waves at the first interface. The argument is a direct analogue to the one in \cite[section 2.3]{RachBoundary}.

To proceed inductively, suppose $M_R$ is recovered for the first $k$ interfaces $\Sigma_{\tau_1},\dots,\Sigma_{\tau_{k-1}}$. Let $\Sigma_{\tau_k}$ be the $k^{\text{th}}$ interface and let $(y,\eta) \in \p^+S^*\Omega_{\tau_k}$ be a fixed covector. We assume $c_{\PS} = \tilde c_{\PS}$ in $\Omega_{\tau_k}^\star$ and so we may assume the transmission coefficients are recovered for these interfaces as well. We let $(x_0,\xi_0) \in T^*\Theta^\star$ lie on the same $P$-transmitted ray as $(y,\eta)$ which exists due to the convex foliation. We will repeat this construction for the $S$-transmitted ray too. Let $h$ be Cauchy data supported in $\Theta^*$ whose wavefront set in $S^*\RR^3$ is exactly $(x_0,\xi_0)$. The constituent of $\mathcal F h|_{\p \Theta}$ associated to the first primary reflection from $\Sigma_{\tau_k}$ is
\[
 M_R (J\BtoB M_T)^{k-1} J \BtoB J \CtoB h.
\]
Due to the convex foliation assumption, our assumptions on the wave speeds, and that $\mathcal F h = \tilde{\mathcal F}h$, we again have $u_h \equiv u_{\tilde h}$ on $\Omega^\star_\tau$ by propagation of singularities. Hence, the associated constituent for $\tilde {\mathcal F h}$ must be equal to this one at $\Sigma_\tau$ since we are not looking at what happens inside $\Omega_\tau$, as we are only considering a reflection. Since $M\prin_T$ are the same for both operators on $\Sigma_{\tau_j}$, $j=1,\dots,k-1$, by our assumption on the wavespeeds, then the same argument as before where we let $h$ generate $s$ waves associated to a purely transmitted $s$-ray through $(y,\eta)$ shows $M_R\prin(y,\eta',\tau) = \tilde M\prin_R(y,\eta',\tau)$ by applying the inverse of $M_T\prin$ and of $J\BtoB\prin$. Here, $(y,\eta')$ is the projection of $(y,\eta)$ to $T^*\Sigma_{\tau_k}$. We are using the fact that, since the Lam\'{e} parameters match on $\Omega^\star_{\tau_k}$, the operators $J\BtoB$ are equal as well for operators in this region. Also, these operators are elliptic near the hyperbolic point sets we are considering.
$\Box$

\begin{lemma}\label{l: jumps from reflection coeff}
Suppose that $\Sigma_\tau \subset \Gamma$, $c_{\PS} = \tilde c_{\PS}$ outside $\overline{\Omega}_\tau$, and denote $\Sigma_\tau^{\pm}$ for the two sides of $\Sigma_\tau$. Suppose that $M\prin_R = \tilde M\prin_R$ on $T^*(\RR_t\times\Sigma^-_\tau)$ for at least two linearly independent covectors. Then $c_{\PS} = \tilde c_{\PS}$ on $\Sigma_\tau^+$.
\end{lemma}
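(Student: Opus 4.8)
The plan is to convert the matched reflection symbol into an explicit algebraic constraint on the deep-side parameters and then invert it. Work in the boundary-normal coordinates of the interface construction, with $\Sigma_\tau = \{x_3 = 0\}$ and the shallow side $\Sigma^-_\tau$ (which lies outside $\Omega_\tau$) playing the role of medium $1$. By hypothesis $c_{\PS} = \tilde c_{\PS}$ on all of $\RR^3 \setminus \overline\Omega_\tau$, so the diagonalizing matrix $S^{(1)}$ of the traction system $A$ on the incident side coincides with $\tilde S^{(1)}$, as do the symbols $a_{11},a_{12},a_{21}$ there. Recalling $M_R = S^{(1)} R (S^{(1)})^{-1}$ with $R = Q_{21}Q_{11}^{-1}$ and $Q = (S^{(1)})^{-1}S^{(2)}$, the assumption $M_R\prin = \tilde M_R\prin$ on $T^*(\RR_t \times \Sigma^-_\tau)$ is then equivalent to $R\prin = \tilde R\prin$ there.

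The key reduction is that $R$ recovers the transmitted-mode subspace of the deep medium. Writing $S^{(2)}_+$ for the first three columns of $S^{(2)}$ (the transmitted $P$ and $S$ modes), the first three columns of $Q$ are $(S^{(1)})^{-1}S^{(2)}_+ = \binom{Q_{11}}{Q_{21}} = \binom{I}{R}Q_{11}$, using $Q_{21} = RQ_{11}$ and the invertibility of $Q_{11}$ (valid wherever $T = Q_{11}^{-1}$ exists). Hence at each covector $(\tau,\xi')$ in the hyperbolic region,
\[
W(\tau,\xi') := \mathrm{range}\,S^{(2)}_+(\tau,\xi') = S^{(1)}(\tau,\xi')\cdot \mathrm{range}\binom{I}{R(\tau,\xi')},
\]
a $3$-dimensional subspace of $\mathbb{C}^6$ depending only on the known $S^{(1)}$ and the observed $R$. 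Matching $R\prin = \tilde R\prin$ therefore forces $W = \tilde W$: the true and trial media must share the same transmitted-mode subspace at every admissible covector.

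Now $W(\tau,\xi')$ is exactly the invariant subspace of the structured matrix $A^{(2)}$ spanned by the eigenvectors whose eigenvalues are the downgoing vertical wavenumbers $\xi_3^{P,+} = \sqrt{\tau^2 c_{P,+}^{-2} - |\xi'|^2}$ and $\xi_3^{S,+} = \sqrt{\tau^2 c_{S,+}^{-2} - |\xi'|^2}$ (the latter with multiplicity two), the $P$ eigenvector having displacement part parallel to $(\xi',\xi_3^{P,+})$ and the $S$ eigenvectors displacement part orthogonal to it, with traction components fixed by $a_{11},a_{12},a_{21}$. Because $\rho = 1$, the only unknowns are $(\mu^+,\lambda^+) = (c_{S,+}^2,\, c_{P,+}^2 - 2c_{S,+}^2)$, and $W$ is an explicit real-analytic function of them and the covector. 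Reading off $\xi_3^{P,+}$ and $\xi_3^{S,+}$ from the polarization structure of $W$ and evaluating at two linearly independent covectors $(\tau_j,\xi'_j)$, $j=1,2$, yields the relations $\big(\xi_3^{\PS,+}(\tau_j,\xi'_j)\big)^2 + |\xi'_j|^2 = \tau_j^2\, c_{\PS,+}^{-2}$, which determine $c_{P,+}$ and $c_{S,+}$ uniquely, giving $c_{\PS} = \tilde c_{\PS}$ on $\Sigma^+_\tau$.

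The main obstacle is making the extraction in the final step rigorous and uniform. Concretely, one must (i) verify that $W$ genuinely separates the transmitted $P$ polarization from the $S$ polarizations — i.e. that the parameter-to-subspace map is injective — which requires the explicit eigenvector computation from $a_{11},a_{12},a_{21}$ and a non-degeneracy check of the resulting $2\times 2$ system at two linearly independent covectors, and (ii) restrict attention to covectors in the hyperbolic cone of the transmitted modes (near-normal incidence), where the wavenumbers are real and $W$ is sensitive to the deep parameters, rather than in a total-internal-reflection regime. This is the elastic analogue of Rachele's boundary-jet computation in \cite[section 2.3]{RachBoundary}, and the bookkeeping of the $3\times 3$ reflection matrix is where the real work lies.
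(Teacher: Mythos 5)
Your structural reduction is sound and is, in effect, an invariant repackaging of what the paper does by explicit computation. The identity $(S^{(1)})^{-1}S^{(2)}_+=\col{I\\ R}Q_{11}$, hence $\mathrm{range}\,S^{(2)}_+=S^{(1)}\cdot\mathrm{range}\col{I\\ R}$, is correct wherever $Q_{11}$ is microlocally invertible (which the paper establishes in Appendix \ref{app: refl/trans}), and together with $S^{(1)}=\tilde S^{(1)}$ on the known side it does show that matching $M_R\prin$ forces the two media to share the transmitted-mode subspace $W$. The paper instead computes the entries of $Q_{21}$ and $T$ outright: it reads off the SH--SH reflection coefficient $r_{33}=(\mu^{(1)}a_S^{(1)}-\mu^{(2)}a_S^{(2)})/(\mu^{(1)}a_S^{(1)}+\mu^{(2)}a_S^{(2)})$, inverts it to obtain the product $\mu^{(2)}a_S^{(2)}$, extracts $\mu^{(2)}$ from that product using the known tangential momenta, and then recovers $\lambda^{(2)}$ from the $2\times2$ P--SV minor. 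Your formulation buys a cleaner conceptual statement (the data is a point in a Grassmannian depending analytically on $(\mu^+,\lambda^+)$); the paper's buys the actual verification.

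That verification is the one genuine gap in your write-up: you defer precisely the step that constitutes the paper's proof, and your sketch of it is too optimistic in two places. First, you cannot ``read off $\xi_3^{S,+}$ from the polarization structure of $W$'' directly: the SH component of $W$ is a single line whose traction-to-displacement slope is $\mp i\mu^{(2)}a_S^{(2)}$, so $W$ hands you the \emph{product} $\mu^{(2)}a_S^{(2)}=\sqrt{(\mu^{(2)})^2|\xi'|^2-\mu^{(2)}\tau^2}$, not the wavenumber itself; disentangling $\mu^{(2)}$ from $a_S^{(2)}$ is exactly why the hypothesis demands two linearly independent covectors, and your final display (two copies of the dispersion relation) does not reflect this coupling. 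Second, injectivity of $(\mu^+,\lambda^+)\mapsto W$ on the P--SV block is a non-degeneracy claim that must be checked against the explicit eigenvector formulas $V_\pm$, $\tilde{\t}_3V_\pm$; it is plausible but not automatic. As written, your proposal proves the reduction and asserts the inversion.
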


The above lemma is essentially saying that the principal symbols of reflection coefficients are enough to recover the jumps in both wave speeds at an interface. This should not come as a surprise since the reflection coefficient would vanish identically if the speeds were actually continuous across the interface. Thus, two waves with identical reflections, must also have transmissions that correspond to the same covectors. Since the proof of the lemma is quite technical, we save it for the appendix (see Appendix \ref{appendix: proof of jumps from refl coeff}).

Both of these crucial lemmas are basically all that is necessary to recover subsurface travel times and lens relations for a particular covector.
We will show the following:
Let $\Gamma \subset \Sigma_\tau$ be relatively open and let $T>0$. Then the lens relations $(L_{P,\tau},l_{P,\tau})$ and $(L_{S,\tau},l_{S,\tau})$ are determined uniquely on the open sets of $(x,v)$ with $ x \in \Gamma$ so that the unit speed geodesic issued from $(x,v)$ at time $0$ in the metric $c_P^{-2}dx^2$, respectively $c_S^{-2}dx^2$, is transversal at $x$ and hits $\Sigma_\tau$ again, transversely, at a point in $\Gamma$ at a time not exceeding $T$ and without hitting any other interfaces. Since we assume that the interfaces are not dense, one may always ensure with $T$ or $\Gamma$ small enough that such rays do not hit another interface before returning to $\Sigma_\tau$.

Also, to recover the lens relation for a particular covector, we will need to use the microlocal scattering control in the form of Proposition \ref{prop: internal source construction}. This requires covectors belonging to $\mathcal S$ and we must ensure there are enough of them. The following lemma uses convex foliation to ensure that we have enough of them.

\begin{lemma}\label{l: mathcal S is big enough}
Let $x \in \Sigma_\tau$ for some $\tau$. Then there is a neighborhood $B_x \subset \Sigma_\tau$ such that $B_x \cap \p^+ S^* \Omega_\tau \cap \mathcal S$ is dense in $B_x \cap \p^+ S^* \Omega_\tau$.
\end{lemma}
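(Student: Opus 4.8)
The plan is to show that the complement of $\mathcal S$ inside $B_x \cap \p^+ S^*\Omega_\tau$ is contained in a closed, nowhere-dense set, from which density of $\mathcal S$ follows immediately. Recall that $(x,\xi)\in\mathcal S$ precisely when every \bad\ broken bicharacteristic branch issuing from $(x,\xi)$ is $(+)$-escapable, i.e.\ it can be traced upward through the already-recovered region and out into $\Theta^\star$, where a controlling tail may be placed. Thus a covector fails to lie in $\mathcal S$ only if at least one of its scattered branches cannot be made to escape. I would first reduce the problem to a finite one: because we only care about the solution up to the time horizon $T$ fixed in Proposition \ref{prop: internal source construction}, and because finite speed of propagation bounds how deep a branch can travel in time $T$, only finitely many interfaces $\Gamma_i$ and finitely many reflection/transmission events (including $P\leftrightarrow S$ mode conversions) are relevant. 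Hence the full scattering tree issuing from $(x,\xi)$ has uniformly bounded depth and branching for $(x,\xi)$ ranging in a fixed neighborhood.

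The central geometric input is the convex foliation condition, which I would use exactly as in the no-trapping arguments of \cite{UVLocalRay,SUVRigidity}: since every level set $\Sigma_s=\rho^{-1}(s)$ is strictly geodesically convex for both $c_P$ and $c_S$, any unit-speed geodesic branch entering $\Omega_\tau$ transversally attains a strict maximum of $\rho$ along its trajectory and thereafter returns toward $\p\Omega$, reaching it in finite time. Consequently no branch is trapped, and the only way a branch can fail to be $(+)$-escapable is if, somewhere along its trajectory, it meets an interface $\Gamma_i$ tangentially (a glancing intersection) rather than transversally; at a transversal intersection Snell's law produces genuine transmitted and reflected branches that continue to propagate and, by convexity, eventually escape. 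Therefore $(B_x\cap\p^+S^*\Omega_\tau)\setminus\mathcal S$ is contained in the set $\mathcal G$ of covectors whose scattering tree (up to time $T$) contains at least one glancing intersection with some $\Gamma_i$.

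It remains to argue that $\mathcal G$ is closed and nowhere dense. For a single branch meeting a single interface $\Gamma_i$, the tangency condition is $\langle\dot\gamma,\nu_i\rangle=0$, where $\nu_i$ is the unit normal to $\Gamma_i$; this is one smooth scalar equation, and away from previously-glancing covectors the branch depends smoothly on the initial covector (through the broken-bicharacteristic/FIO parametrix construction of the previous section), so the tangency locus is a smooth hypersurface, hence closed with empty interior. Because only finitely many interfaces and finitely many scattering events contribute, $\mathcal G$ is a finite union of such loci, and is therefore closed and nowhere dense; its complement is open and dense in $B_x\cap\p^+S^*\Omega_\tau$. Shrinking $B_x$ if necessary (so that branches issuing from nearby covectors behave uniformly and, as in the footnote, near-tangent primary rays avoid the other interfaces), we conclude that $\mathcal S$ is dense in $B_x\cap\p^+S^*\Omega_\tau$; this parallels the density of the regular set $\Omega_r$ obtained from the analogue of \cite[Lemma 3.3]{CHKUUniqueness}.

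The step I expect to be the main obstacle is the bookkeeping that makes the finite reduction and the smooth-dependence claim rigorous: one must verify that within the finite time horizon the number of scattered branches is genuinely bounded and that the map sending an initial covector to the position and direction of each branch at each interface crossing is smooth as long as no earlier glancing has occurred, so that glancing can be peeled off one interface at a time and each contributes only a nowhere-dense tangency set. The delicate points are branches that approach an interface asymptotically tangentially and the need to exclude the measure-zero locus where $\rho$ is singular on $\Gamma$; handling these carefully, rather than the genericity principle itself, is where the real work lies.
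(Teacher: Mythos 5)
Your overall strategy --- convexity of the foliation rules out trapping, the finite time horizon bounds the scattering tree, and the failure set is a finite union of thin tangency loci --- is the same skeleton the paper uses (it invokes Lemma \ref{l: purely transmitted rays are dense} for non-trapping and Lemma \ref{l: glancing rays are small} for the thinness of the glancing set). However, there are two genuine gaps.

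First, you reduce failure of membership in $\mathcal S$ entirely to ``some branch meets an interface tangentially,'' but the definition of $(+)$-escapability is recursive and its clause (iii) additionally requires that, wherever a cancellation is needed, \emph{both} the $P$ and $S$ \emph{opposite} bicharacteristics exist, are non-glancing, and are $(-)$-escapable (i.e.\ escape when traced backward in time); these are the rays along which the tail is sent in to kill the bad reflections. Your argument never verifies anything about the opposite rays. The paper's proof addresses this explicitly: at each transversal interface crossing it records not only the two upward-moving transmitted branches but also ``exactly two opposite branches\,\dots\,transverse\,\dots\,and mov[ing] upward in backward time,'' and it is the simultaneous genericity of the forward and opposite families that puts a covector in $\mathcal S$. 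A proof that only shows forward branches escape has not established $(+)$-escapability as defined.

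Second, your dichotomy ``transversal incidence $\Rightarrow$ genuine transmitted and reflected branches'' is too coarse in the elastic setting. A transversally incident $S$ ray can have its mode-converted transmitted $P$ branch exactly at, or beyond, the critical angle, so that the required branch is glancing or simply absent as a propagating ray even though the incident ray is nowhere tangent to $\Gamma_i$. This failure is not captured by the tangency condition $\langle\dot\gamma,\nu_i\rangle=0$ on the incident branch. The paper isolates precisely this issue (``the transmitted $P$ branch might be glancing where we have hit a critical angle'') and disposes of it by noting that the critical-angle locus is of lower dimension than the hyperbolic set, so a perturbation of the initial covector (possibly moving the base point within $B_x$) avoids it; convexity, via $\mathcal H^+_{\PS}\subset\mathcal H^-_{\PS}$, is what guarantees that transmissions from below always exist at all. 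To repair your argument you would need to enlarge your exceptional set $\mathcal G$ to include, for every node of the scattering tree, both the critical-angle loci of the mode-converted branches and the corresponding loci for the opposite $P$ and $S$ rays, and then run the same codimension count on this larger (but still finite) union.
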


\begin{proof}
The proof follows from the convex foliation condition and repeated application of Lemma \ref{l: purely transmitted rays are dense} and its proof.

Take a particular covector $(x,\nu) \in \p ^+S^* \Omega_\tau$ pointing upwards and let $\gamma_{\PS, \nu_x}$ be the associated smooth bicharacteristic starting at $(x,\nu)$. Considering $\gamma_{P,\nu_x}$ first, it will either glance or hit the next interface $\Sigma_{\tau_1}$ at time $t_1$, say, transversely. If the latter, the convex foliation guarantees that both the $P$ and $S$ transmitted branches continuing $\gamma_{P,\nu_x}$ will also be transverse to $\Sigma_{\tau_1}$ and move ``upward'' (decreasing $\rho$). Also, there will be exactly two opposite branches at $\gamma_{P,\nu_x}(t_1)$ that are transverse to $\Sigma_{\tau_1}$ and move upward in backward time. If it glances, then by Lemma \ref{l: glancing rays are small}, an arbitrary perturbation of $\nu$ avoids this. We can apply this analysis to each successive $P$ branch discussed and iterate; since the time $T_s$ in the definition of escapability is finite, there will be only finitely many branchings and so there will be a dense set of $\nu \in \p^+ S_x^*\Omega_\tau$ such that all the $P$-branches of $\gamma_{P,\nu_x}$ escape. The continued $S$ branches will be analyzed next.

Let us now consider $\gamma_{S,\nu_x}$ and use the same notation $t_1$ and $\Sigma_{\tau_1}$ as in the previous case. The analysis for $\gamma_{S,\nu_x}$ will apply just as well for the $s$ branches discussed in the previous paragraph. If $\gamma_{S,\nu_x}(t_1)$ does not glance, the convex foliation guarantees an $S$ transmitted branch that continues $\gamma_{P,\nu_x}$, is transverse to $\Sigma_{\tau_1}$, and moves ``upward'' (decreasing $\rho$). The issue is that the transmitted $P$ branch might be glancing where we have hit a critical angle. However, this glancing set is a dimension lower than the hyperbolic points and so we may perturb this $P$-branch to be transversal to $\Sigma_{\tau_1}$ and move upward. We may then continue this branch backward with an $S$ ray that starts on $\p^+ S^*\Omega_\tau$, is a slight perturbation of $\gamma_{S,\nu_x}$, and has a different base point.

 Hence, we now have both a transmitted $P$ and $S$ branch moving upwards by convex foliation, and an opposite $P$ and $S$ branch moving upwards backward in time. We then apply the analysis in the last paragraph and iterate the above for each successive interface. Hence, either $(x,\nu)$ or an open set of perturbations of it will be escapable.

Using Lemma \ref{l: glancing rays are small}, the above analysis shows there is a neighborhood $B_x \subset \Sigma_\tau$ of $x$ such that a dense set of $\p^+ S_{B_x}^*\Omega_\tau$ are escapable. Indeed, any covector that is not escapable can be perturbed by the above procedure.
\end{proof}

In the following series of proofs, we rely on the previous lemma to keep using Proposition \ref{prop: internal source construction} without explicitly saying so.

\begin{lemma}\label{l: lens relation recovery}
Let $(x,\xi) \in \p T^* \Omega \cap S^*_+\Omega$ as described above and assume the convex foliation condition. If $\mathcal F = \tilde {\mathcal F}$ and $\lambda = \tilde \lambda$, $\mu = \tilde \mu$ outside $\Omega_\tau$, then $c_{\PS}$ and $\tilde c_{\PS}$ have identical subsurface lens relations w.r.t. $\Sigma_\tau$ in a neighborhood of $(x,\xi)$ within $T_{\Sigma_\tau}^*\Omega$.
\end{lemma}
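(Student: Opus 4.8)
The plan is to recover the subsurface lens relation $(L_{P,\tau}, l_{P,\tau})$ (and similarly for $S$) by generating a clean, single, purely transmitted $P$-wave entering $\Omega_\tau$ at the covector $(x,\xi)$, tracing its propagation of singularities until it returns to $\Sigma_\tau$, and reading off the exit covector and travel time from the wavefront set of the measured solution. The fundamental obstruction — already flagged in the summary of the main proof — is that the measured solution's wavefront set is cluttered with internal multiples (reflected $P$ and $S$ waves and mode-converted transmissions) created when the initial wave strikes the interfaces above $\Sigma_\tau$. So the core of the argument is to invoke Proposition~\ref{prop: internal source construction} to build Cauchy data $h_\infty$, supported outside $\Omega$, whose propagated wavefront set inside $\Omega_\tau$ is \emph{exactly} the single bicharacteristic $\gamma$ through $(x,\xi)$, with $\mathrm{WF}(u_{h_\infty}) \subset \Sigma_P$ near the relevant time, so that no spurious singularities interfere.

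\emph{First} I would fix $(x,\xi)$ in the dense escapable set $\mathcal S$ guaranteed by Lemma~\ref{l: mathcal S is big enough}, take $v$ with $\mathrm{WF}(v) = (x,\RR_+\xi)$, and apply Proposition~\ref{prop: internal source construction} to obtain $h_\infty$ and a time $T$ with $\mathrm{WF}(R_T h_\infty) = \mathrm{WF}(v)$ and $\mathrm{WF}(R_{T+s}h_\infty) = \mathrm{WF}(R_s v)$ inside $\Omega_\tau$ for all $s \geq 0$, arranging the pure-$P$ polarization. \emph{Next}, by propagation of singularities for the elastic system — using the transmitted-bicharacteristic structure encoded in the parametrix of the previous section — the singularity follows $\gamma$ until, at the subsurface travel time $l_{P,\tau}(x,\xi)$, it reaches $\gamma(l) = L_{P,\tau}(x,\xi) \in T^*_-\Omega \cap T^*\Sigma_\tau$, exiting $\Omega_\tau$ transversally. \emph{Then} the key point is that the exit covector and the elapsed time are both \emph{observable}: since $\lambda = \tilde\lambda,\ \mu = \tilde\mu$ outside $\Omega_\tau$, the parameters (hence the ray geometry and the propagator $J\BtoS$) agree on $\Omega_\tau^\star$, so the portion of $u_{h_\infty}$ traveling from $\Sigma_\tau$ back out to $\partial\Theta$ is determined by the known operators, and its arrival covector and time at the measurement surface uniquely encode $(L_{P,\tau}, l_{P,\tau})$. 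Because $\mathcal F = \tilde{\mathcal F}$, the \emph{same} $h_\infty$ produces the identical measured data for the tilded medium.

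\emph{The crux} is to argue that the tilded wave $u_{\tilde h_\infty}$ must also consist of a single clean transmitted ray with the \emph{same} exit data. Here I would reason as follows: the tail construction of $h_\infty$ depends only on the geometry and operators in $\Omega_\tau^\star$, where the two media coincide, so $h_\infty$ is simultaneously a valid scattering-control source for both media, suppressing multiples identically down to $\Sigma_\tau$. Inside $\Omega_\tau$ the two media may differ, but we only need that the \emph{single} incoming $P$-singularity at $(x,\xi)$ propagates to a single outgoing singularity whose projection to $\Sigma_\tau$ and whose travel time are read off outside $\Omega$; since $\mathcal F h_\infty = \tilde{\mathcal F} h_\infty$, the measured wavefront sets coincide, forcing $L_{P,\tau}(x,\xi) = \tilde L_{P,\tau}(x,\xi)$ and $l_{P,\tau}(x,\xi) = \tilde l_{P,\tau}(x,\xi)$. \emph{Finally} I would repeat verbatim with the $S$-polarized version of Proposition~\ref{prop: internal source construction} to obtain equality of the $S$ lens relations, and then invoke density of $\mathcal S$ (Lemma~\ref{l: mathcal S is big enough}) to extend the equality from the escapable covectors to a full neighborhood of $(x,\xi)$ in $T^*_{\Sigma_\tau}\Omega$ by continuity of the lens relation.

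\emph{The main obstacle} I anticipate is verifying rigorously that the wavefront set returning to the measurement region unambiguously isolates $\gamma(l_{P,\tau})$ — i.e., that after the scattering-control tail eliminates the unwanted multiples, no residual singularity of $u_{h_\infty}$ arrives at $\Sigma_\tau$ at the same covector but a different time, and that the pure-$P$ arrangement survives the return trip through $\Omega_\tau^\star$ without the ambiguity between a directly transmitted ray and a twice-mode-converted ray. Controlling this requires careful bookkeeping of which broken bicharacteristics in the parametrix sum $\tilde R_{2T} = \sum_s \tilde R_s$ contribute to $\mathrm{WF}(u_{h_\infty})$ near the exit point, and leaning on the escapability hypothesis (the $(+)$-escapable condition defining $\mathcal S$) to guarantee the desired branch genuinely returns to $\partial\Theta$ while the unwanted ones have been cancelled.
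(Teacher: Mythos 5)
Your proposal reproduces the paper's argument faithfully for the case where $x$ does not lie on an interface: generate a pure, single $P$ (resp.\ $S$) singularity at $(x,\xi)$ via Proposition~\ref{prop: internal source construction}, use the agreement of the media on $\Omega_\tau^\star$ together with $\mathcal F=\tilde{\mathcal F}$ and propagation of singularities to conclude $u\equiv\tilde u$ there, and read off $l_{\PS,\tau}$ and $L_{\PS,\tau}$ as the first return singularity in $T^*_+\Sigma_\tau$. That part is essentially the paper's proof.

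There is, however, a genuine gap in the case where $x$ lies on an interface, i.e.\ $\Sigma_\tau\subset\Gamma$. Your key step asserts that ``the tail construction of $h_\infty$ depends only on the geometry and operators in $\Omega_\tau^\star$, where the two media coincide, so $h_\infty$ is simultaneously a valid scattering-control source for both media.'' This is false at $\Sigma_\tau$ itself: the transmission operator $M_T$ at $\Sigma_\tau$ depends on the Lam\'e parameters on \emph{both} sides of the interface, and the underside values lie in $\overline{\Omega_\tau}$, which is precisely the unknown region. A priori $\tilde c_\PS$ could jump differently across $\Sigma_\tau$, so the same $h_\infty$ could launch, for the tilded medium, a transmitted wave with a different covector (Snell's law changes), a different polarization content (a surviving mode-converted $S$ component), or could even make you match a $P$ travel time of $\Op$ against an $S$ travel time of $\tilde\Op$ --- an ambiguity you flag as an ``anticipated obstacle'' but do not resolve. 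The paper closes this gap with two lemmas you never invoke: Lemma~\ref{l: recover refl coeffs} shows $\mathcal F=\tilde{\mathcal F}$ forces $M_R\prin=\tilde M_R\prin$ on $T^*\Sigma_\tau^-$, and Lemma~\ref{l: jumps from reflection coeff} shows this equality of reflection symbols determines the one-sided limits $c_\PS=\tilde c_\PS$ on $\Sigma_\tau^+$, whence $\tilde M_T\equiv M_T$ near $(x,\xi)$ and the directly transmitted constituent on the underside of $\Sigma_\tau$ is a pure $P$-wave with the same initial covector for both media. Without that input your argument only shows the two measured wavefront sets agree outside $\Omega_\tau$, not that the first returning singularities are labelled by the same mode and hence encode the same lens relation.
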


\proof
Without loss of generality, under the convex foliation condition we may assume that $x$ is a regular point since otherwise, one may use a continuity/density argument described in \cite{CHKUUniqueness}. We will divide the proof into two cases, which have slightly different proofs.

\medskip

\textbf{The point \boldmath $x$ is not on an interface:}
We let $v \in \mathcal E'(\Omega)$ be such that $\text{WF}(v) = (x,\RR \xi)$ and let $h$ be as in Proposition \ref{prop: internal source construction} supported outside $\Omega$. We let $u = F(R_{-a}h)$ and $\tilde u = \tilde F(R_{-a} h)$ with an appropriately chosen $a$ based on the support of $h$ (see \cite{CHKUUniqueness} for details). Now, $u = \tilde u$ outside $\Omega$ and by unique continuation, $u= \tilde u$ outside $\Omega_\tau$ since the Lam\'{e} parameters coincide there. In fact all we need is that $u \equiv \tilde u$ in $\Omega_\tau^\star$ which follows by microlocal analysis. Indeed, any ray in this set has a branch that escapes $\Omega$ by the foliation condition. Thus, by propagation of singularities, $u=\tilde{u}$ inside $\Omega_\tau^\star$ modulo smoothing.

 Let $T$ denote the time the transmitted geodesic from $\p \Omega$ reaches $\xi$. By Proposition \ref{prop: internal source construction}, we can ensure $\text{WF}(u)$ restricted to $\RR_t \times \Omega_\tau$ is generated purely from the $P$-ray associated to $\xi$. We only consider those $\xi$ whose associated $P$-geodesic does not encounter any interface before reaching $\p \Omega_\tau$. This is always possible by the convex foliation condition and taking $\xi$ that are near tangent to $\p \Omega_\tau$. Since the Lam\'{e} parameters are smooth near $x$, then for a $\xi$ nearly tangential to $\p \Omega_\tau$, the first singularity of $u$ in $T^*_+ \Sigma_\tau$ occurs at time $T + l_{P,\tau}(x,\xi)$ and covector $L_{P,\tau}(x,\xi)$. This must be true for $\tilde u$ as well since $u=\tilde u$ outside $\Omega_\tau$. Hence, $l_{P,\tau}(x,\xi) = \tilde l_{P,\tau} (x,\xi)$ and $L_{P,\tau}(x,\xi) = \tilde L_{P,\tau} (x,\xi)$. We then repeat the above argument using Proposition \ref{prop: internal source construction} to generate a pure $s$-wave, singular precisely at $(x,\xi)$ when restricted to $\Omega_\tau$ at the appropriate time. This works since we can always restrict to rays which do not hit any interfaces before returning to $\Sigma_\tau$ by the convex foliation.
\medskip

\textbf{The point \boldmath $x$ is at an interface:}
First, without loss of generality, we may assume that $\Sigma_\tau$ actually coincides with the interface near $x$. Indeed, any point $z \in \Sigma_\tau$ near $x$ that is not at an interface implies that the Lam\'{e} parameters are smooth there. Hence we may apply the above result for the smooth case combined with Theorem \ref{thm: UV local rigidity} to show that the wavespeeds coincide near such points. Progressing in this fashion shows that both wave speeds in fact coincide near $x$ up to the interface that contains $x$, and so the wavefields coincide there as well. Hence, we may assume that $\Sigma_\tau$ is the interface.
\\

 Using Lemma \ref{l: jumps from reflection coeff}, we conclude that if $u$ is a pure $P$-wave for some time in $\Omega_\tau$, then $\tilde u$ is as well, both associated to $(x,\xi)$, even though inside $\Omega_\tau$ they could theoretically be quite different.

We then examine the construction of $h$ in Proposition \ref{prop: internal source construction} more closely.\footnote{The following argument is necessary to ensure that we match a $P$ travel time associated to $\Op$ with the corresponding one associated to $\tilde \Op$ rather than an $s$ travel time associated to $\tilde \Op$.} The $\PS$-directly transmitted component of $R_T h$ is $\mathbf {DT}^+_{k,\PS} h$ from definition \ref{def: microlocal direct trans}.
We make the decomposition $h= h_0 + K\tail$. We take any wavefield $v$, supported in $\Omega_\tau$ initially and whose wavefront set is exactly $\gamma^P_{\xi_x}$ and inside $\Sigma_P$, where $\gamma^P_{\xi_x}$ is a $P$-bicharacteristic whose initial covector is $(x,\xi) \in T^*\Omega_\tau$. With $\rho_\Gamma$ denoting restriction to $\Gamma$, we may view $\rho_{\Sigma_\tau} v$ as boundary data. The construction of $h_0$ and $K_{tail}$ in Proposition \ref{prop: internal source construction} ensures
\[
R_{T+t} h|_{\Omega_\tau} \equiv \mathbf{DT}_{k,p}^+h_0|_{\Omega_\tau}
\equiv v|_{\Omega_\tau}.
\]
That is, the directly transmitted constituent of $h$ inside $\Omega_\tau$ (the ``underside'' of $\Sigma_\tau$) is precisely a $P$-wave associated to $(x,\xi)$. The point is that the same initial data $h$ will also produce a pure $P$-wave with respect to $\tilde c_P$ on the underside of $\Sigma_\tau$ by Lemma \ref{l: jumps from reflection coeff} since that lemma implies that $\tilde M_T \equiv M_T$ at $T^*\Sigma_\tau$ near $x$.

Thus, since the transmission matrices of $u$ and $\tilde u$ coincide microlocally near $(x,\xi)$, then
\[
\rho_{\Sigma_\tau} v \equiv \rho_{\Sigma_\tau} u|_{\Omega_\tau} \equiv \mathbf{DT}^+_{k,P} h = \tilde{\mathbf{DT}}^+_{k,P} h
\equiv \rho_{\Sigma_\tau} \tilde u|_{\Omega_\tau}.
\]
We note that inside $\Omega_\tau$, $\tilde J \BtoS \rho_{\Sigma_\tau} v$ is indeed a pure $P$-wave associated to $(x,\xi)$, so $\tilde J \BtoS \rho_{\Sigma_\tau} \tilde u$ will be as well with speed $\tilde c_P$.
By our assumptions, $\text{WF}(u|_{\Sigma_\tau^-}) = \text{WF}(\tilde u|_{\Sigma_\tau^-})$. By Proposition \ref{prop: internal source construction}, if we consider the $t$-component of this wavefront set, then the first $t$ past $T$ in this wavefront set will be precisely $l_{P,\tau}(x,\xi)$ by our construction. By equality of the wavefields and since $\tilde u$ was also a pure $P$-wave at time $T$ in $\Omega_\tau$, then $l_{P,\tau}(x,\xi)=\tilde l_{P,\tau}(x,\xi).$ A similar argument lets us conclude $l_{S,\tau}(x,\xi) = \tilde l_{S,\tau}(x,\xi)$ as well.
$\Box$
\newline

We can combine the above lemma with Theorem \ref{thm: UV local rigidity} to obtain the key corollary. First, let $d^\tau_{\PS}$ denote the $\PS$-distance function restricted to $\overline\Omega_\tau \times \overline\Omega_\tau$.
\begin{cor}\label{cor: local uniqueness}
With the assumptions in the above lemma, $d_{\PS}^\tau\big|_{\Sigma_\tau \times \Sigma_\tau} = \tilde d_{\PS}^\tau\big|_{\Sigma_\tau \times \Sigma_\tau}$ in some neighborhood of $x$, and $c_{\PS} = \tilde c_{\PS}$ in some neighborhood of $x$.
\end{cor}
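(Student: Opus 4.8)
\emph{(Proof sketch.)} The plan is to regard $\Sigma_\tau$ as the boundary of the subdomain $\Omega_\tau$, to recast the subsurface lens data furnished by Lemma~\ref{l: lens relation recovery} as boundary distance data for $\Omega_\tau$, and then to invoke the Stefanov--Uhlmann--Vasy local rigidity result (Theorem~\ref{thm: UV local rigidity}) in its dense-set form (Corollary~\ref{cor: rigidity from dense set of point}). Since the argument is identical for the two speeds, I fix one of $P,S$ and suppress the $\PS$ subscript, writing $c$ for $c_\PS$ and $g=c^{-2}dx^2$.

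First I would convert lens relations into distances. For $(x,\xi)\in T^*_{\Sigma_\tau}\Omega$ with $\xi$ pointing downward into $\Omega_\tau$ and nearly tangent to $\Sigma_\tau$, the purely transmitted $\PS$-bicharacteristic issued from $(x,\xi)$ dips a short distance into $\Omega_\tau$ and returns to $\Sigma_\tau$ at the base point $y$ of $L_{\PS,\tau}(x,\xi)$ after time $l_{\PS,\tau}(x,\xi)$. The convex foliation condition makes $\Sigma_\tau$ geodesically convex with respect to both $g$ and $\tilde g$ when viewed from the deeper side $\Omega_\tau$, so for near-tangent $\xi$ this short arc is the unique minimizing geodesic of $\overline\Omega_\tau$ joining $x$ to $y$. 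Hence its length is the intrinsic boundary distance, $l_{\PS,\tau}(x,\xi)=d^\tau_\PS(x,y)$, and likewise $\tilde l_{\PS,\tau}(x,\xi)=\tilde d^\tau_\PS(x,\tilde y)$.

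Next, Lemma~\ref{l: lens relation recovery} yields $(L_{\PS,\tau},l_{\PS,\tau})=(\tilde L_{\PS,\tau},\tilde l_{\PS,\tau})$ near $(x,\xi)$, but only for covectors lying in $\mathcal S$ (as required to run the scattering-control construction of Proposition~\ref{prop: internal source construction}) whose transmitted geodesics return to $\Sigma_\tau$ without first meeting another interface. By Lemma~\ref{l: mathcal S is big enough} these covectors are dense in $\p^+S^*\Omega_\tau$ near $x$, and since the interfaces are not dense we may restrict to near-tangent covectors with short return; together they sweep out a dense set of endpoint pairs $(x_1,x_2)$ on $\Sigma_\tau$ near $x$. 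Combining with the previous paragraph gives $d^\tau_\PS(x_1,x_2)=\tilde d^\tau_\PS(x_1,x_2)$ for this dense set of pairs, and by continuity of the distance functions the equality propagates to a full neighborhood, proving the first assertion.

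Finally I would apply Corollary~\ref{cor: rigidity from dense set of point} with $\Omega_\tau$ in place of $\Omega$ and $\Sigma_\tau$ in place of $\p\Omega$. Its hypotheses are met: $n=3$, the speeds $c_\PS,\tilde c_\PS$ are smooth just inside $\Omega_\tau$ near $x$ (the only singularities lie on the interfaces, which are isolated leaves of the foliation), $\Sigma_\tau$ is convex for both metrics by the foliation condition, and the boundary distances agree on a dense set of pairs near $x$. The corollary then gives $c_\PS=\tilde c_\PS$ in a neighborhood of $x$ inside $\Omega_\tau$; combined with $c_\PS=\tilde c_\PS$ outside $\Omega_\tau$ (a standing hypothesis of the lemma) this yields equality in a full neighborhood of $x$. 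Carrying this out for both $\PS=P$ and $\PS=S$ completes the proof. The main obstacle is the first step: ensuring that the recovered lens arcs are genuinely distance-minimizing, so that their lengths really equal $d^\tau_\PS$. This is precisely where the convexity packaged into the foliation condition, together with the restriction to near-tangent covectors, is indispensable; the only other delicate point is passing from the dense set of admissible covectors (coming from $\mathcal S$ and the non-density of the interfaces) to a dense set of boundary point-pairs, which is exactly what forces the use of the dense-set rigidity corollary in place of its full-neighborhood version.
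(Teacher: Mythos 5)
Your proposal is correct and follows essentially the same route the paper intends: the paper states the corollary as an immediate combination of Lemma~\ref{l: lens relation recovery} with Theorem~\ref{thm: UV local rigidity} (in its dense-set form, Corollary~\ref{cor: rigidity from dense set of point}), and fleshes out exactly your steps — near-tangent covectors whose short transmitted arcs are minimizing by the foliation convexity, density of admissible covectors from Lemma~\ref{l: mathcal S is big enough}, and a ball small enough to avoid other interfaces — inside the proof of Theorem~\ref{thm: uniqueness final thm}. No substantive differences or gaps.
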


\begin{theorem}\label{thm: uniqueness final thm}
Under the convex foliation condition, if $\mathcal F = \tilde {\mathcal F}$, then $c_{\PS} = \tilde c_{\PS}$.
\end{theorem}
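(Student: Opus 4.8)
The plan is to prove the theorem by a continuous layer-stripping induction on the foliation parameter $\tau$, with the lemmas above supplying the single inductive step. Define
\[
\tau^*=\sup\set*{\tau\in[0,\tau_0]}{c_{\PS}=\tilde c_{\PS}\text{ on }\overline{\Omega^\star_\tau}};
\]
it suffices to show $\tau^*=\tau_0$, which gives $c_{\PS}=\tilde c_{\PS}$ on $\bigcup_{\tau<\tau_0}\Omega^\star_\tau$, i.e.\ on all of $\Omega$. The supremum is attained: because $\Omega^\star_{\tau^*}=\bigcup_{\tau<\tau^*}\Omega^\star_\tau$, agreement below every shallower level forces $c_{\PS}=\tilde c_{\PS}$ on $\Omega^\star_{\tau^*}$, and continuity of the speeds off the interfaces extends the match to the outer face $\Sigma^-_{\tau^*}$. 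The whole content is therefore to show that, whenever $\tau^*<\tau_0$, agreement can be pushed strictly below $\Sigma_{\tau^*}$, contradicting maximality. The induction starts at $\tau=0$: the level set $\Sigma_0=\p\Omega$ is smooth and the extended parameters already agree on $\RR^3\setminus\overline\Omega$, so the hypotheses needed below hold trivially.

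The inductive step is Corollary \ref{cor: local uniqueness}, applied at each $x\in\Sigma_{\tau^*}$. Its input is the equality of subsurface lens relations near $x$: invoking Lemma \ref{l: mathcal S is big enough} I obtain a dense set of escapable covectors in $\mathcal S$, feed them through Proposition \ref{prop: internal source construction} to manufacture pure $P$- and $S$-waves at depth $\tau^*$, and apply Lemma \ref{l: lens relation recovery} to conclude $(L_{\PS,\tau^*},l_{\PS,\tau^*})=(\tilde L_{\PS,\tau^*},\tilde l_{\PS,\tau^*})$ on that dense set, hence $d^{\tau^*}_{\PS}\big|_{\Sigma_{\tau^*}\times\Sigma_{\tau^*}}=\tilde d^{\tau^*}_{\PS}\big|_{\Sigma_{\tau^*}\times\Sigma_{\tau^*}}$ densely near $x$. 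The local rigidity Theorem \ref{thm: UV local rigidity}, in its dense-data form Corollary \ref{cor: rigidity from dense set of point}, then yields $c_{\PS}=\tilde c_{\PS}$ in a one-sided neighborhood just below $\Sigma_{\tau^*}$, contradicting the definition of $\tau^*$. When $\Sigma_{\tau^*}$ is not an interface the parameters are smooth across it and this step is immediate.

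The step I expect to be the main obstacle is the interface case, $\Sigma_{\tau^*}=\Gamma_i$, where there is no direct access below the interface and the internal multiples created at $\Gamma_i$ obscure the travel time in the wavefront set. Here the lens-relation recovery of Lemma \ref{l: lens relation recovery} must first match the scattering behaviour on the two faces of the interface: Lemma \ref{l: recover refl coeffs} (using $\mathcal F=\tilde{\mathcal F}$ together with the transmission coefficients already matched at the shallower interfaces) gives $M\prin_R=\tilde M\prin_R$ on $T^*\Sigma^-_{\tau^*}$ for both $P$- and $S$-incidence, and Lemma \ref{l: jumps from reflection coeff} converts equality of these reflection symbols at two linearly independent covectors into equality $c_{\PS}=\tilde c_{\PS}$ on the inner face $\Sigma^+_{\tau^*}$. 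With the jumps of both speeds across $\Gamma_i$ now matched, the layer immediately below is smooth with agreeing boundary speeds, so local rigidity applies there exactly as in the smooth case and agreement again extends strictly past $\Sigma_{\tau^*}$.

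Finally, because the interfaces $\Gamma_i$ are disjoint closed hypersurfaces sitting at distinct levels $\rho^{-1}(t_i)$, they are finite in number and do not accumulate, so only finitely many interface-crossings separate $\tau=0$ from $\tau=\tau_0$, with the smooth-advancing step covering the intervening intervals. Thus the admissible set of depths is nonempty, its supremum is attained, and it can always be strictly enlarged unless it already equals $\tau_0$; hence $\tau^*=\tau_0$ and $c_{\PS}=\tilde c_{\PS}$ throughout $\Omega$.
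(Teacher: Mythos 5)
Your overall strategy is the paper's: a layer-stripping induction over the foliation parameter, driven by exactly the same chain of lemmas (Lemma \ref{l: mathcal S is big enough}, Proposition \ref{prop: internal source construction}, Lemma \ref{l: lens relation recovery}, Lemmas \ref{l: recover refl coeffs} and \ref{l: jumps from reflection coeff}, then Theorem \ref{thm: UV local rigidity} and Corollary \ref{cor: rigidity from dense set of point}). The organizational difference is that you run a supremum argument over $\tau$, whereas the paper sets $f=|c_P-\tilde c_P|^2+|c_S-\tilde c_S|^2$, takes $\tau=\min\rho$ over $\Omega_r\cap\text{supp}\,f$, and derives a contradiction at a single point $x\in\Sigma_\tau$. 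However, there is a genuine gap in your smooth case. You assert that when $\Sigma_{\tau^*}$ is not an interface ``the parameters are smooth across it and this step is immediate.'' It is not: nothing established so far rules out that $\tilde c_\PS$ has an interface $\tilde\Gamma$ passing through $x$ even though $c_\PS$ is smooth there, in which case the transmitted $\tilde c_\PS$-ray is not the geodesic of a smooth metric and local rigidity cannot be applied to $\tilde c$. Since $c_\PS=\tilde c_\PS$ on $\Omega^\star_{\tau^*}$, such a $\tilde\Gamma$ would have to be tangent to the leaf $\Sigma_{\tau^*}$; the paper excludes it by using Proposition \ref{prop: internal source construction} to send a singularity through a covector $(x,\xi)$ transversal to the leaf and noting that the reflected bicharacteristic produced by $\tilde\Gamma$ would stay in $\Omega^\star_{\tau^*}$ for $t$ slightly greater than $T$, contradicting $u_h=\tilde u_h$ there. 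You need this argument (or an equivalent one) before you may treat both speeds as smooth near $x$.

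A secondary issue is created by your supremum formulation: Corollary \ref{cor: local uniqueness} yields agreement only in a neighborhood of each individual $x\in\Sigma_{\tau^*}$, so to contradict maximality you must extract a uniform $\epsilon>0$ with agreement on all of $\Omega^\star_{\tau^*+\epsilon}$; that requires a compactness argument over the whole leaf (with some care, since $\rho$ is only assumed upper semicontinuous) and a check that regular points, $\mathcal S$-covectors, and interface-avoiding short geodesics are available densely along the entire leaf, not just near the chosen $x$. The paper's min-of-support formulation sidesteps this, since a contradiction at the single point $x\in\Sigma_\tau\cap\Omega_r\cap\text{supp}\,f$ suffices. Finally, your claim that the interfaces are finite in number is neither assumed nor needed; what the paper actually uses is that the interfaces are not dense, so that the ball $B_x$ can be chosen to meet no interface other than (possibly) $\Sigma_\tau$ itself.
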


\begin{proof}
The proof is by contradiction. Suppose $c_P \neq \tilde c_P$ or $c_S \neq \tilde c_S$, and let $f= |c_P - \tilde c_P|^2 + |c_S - \tilde c_S|^2$. Now consider $S:= \Omega_r \cap \text{supp} f$, and take $\tau = \min_S \rho$: 
so $c_{P} = \tilde c_{P}$ and $c_S = \tilde c_S$ above $\Omega_\tau$, but by compactness there is a point $x \in \Sigma_\tau \cap S$. The condition that $\rho^{-1}(T)$ has measure zero rules out the trivial case $\tau = \tau_0$.

Let us now consider a small neighborhood of $x$, denoted $B_x$, and we consider the $\Sigma_\tau$-boundary distance function $d^\tau_{\PS}$ restricted to such neighborhoods. Since the interfaces are not dense, and we assume convex foliation, we may choose $B_x$ small enough so that all $P$ and $S$ rays corresponding to rays staying completely inside $B_x$ do not reach an interface; i.e. even the mode converted rays do not reach an interface. This insure that a $P$-wave that hits $B_x$, transmits a $P$ and $S$ wave, the $P$-wave returns to $\Sigma_\tau$ first before any other ray.

We now consider two cases, depending on whether $x$ is on an interface of $c_{\PS}$ or not.
\medskip

\noindent\textbf{\boldmath Smooth case:} $x \notin \Gamma$. As in \cite{UVLocalRay} we use the fact that $c_{\PS}$ and $\tilde c_{\PS}$ are equal above $\Omega_\tau$ to show they locally have the same lens relation on $\Sigma_\tau$. We can then apply Corollary \ref{cor: local uniqueness} to show that in fact $c_{\PS} = \tilde c_{\PS}$ near $x$, contradicting $x \in \text{supp} f$. The additional wrinkle is that we must ensure that $\tilde c_{\PS}$ is also smooth near $x$. 

Suppose on the contrary that $\tilde c_\PS$ were not smooth at $x$. Since $c_\PS=\tilde c_\PS$ on $\Omega^\star_\tau$, if $x\in\tilde\Gamma$, then $\tilde\Gamma$ must be tangent to the leaf $\Sigma_\tau$. Now let $\gamma$ be any bicharacteristic through a covector $(x,\xi)$ not tangential to the leaf $\Sigma_\tau$, and choose initial data $h$ by Proposition~\ref{prop: internal source construction} satisfying $\WF(R_T h)=(x,\RR_+\xi)$. Then $\tilde u_h(T)$ is singular on the reflected bicharacteristic to $\gamma$ at $x$. But this is impossible, since $u_h(T)=\tilde u_h(T)$ on $\Omega^\star_\tau$, and the reflected bicharacteristic is contained in $\Omega^\star_\tau$ for $t$ slightly greater than $T$, since $\tilde\Gamma$ is tangent to $\Sigma_\tau$.

From the argument above, we conclude $c_\PS, \tilde c_\PS$ are smooth in a sufficiently small $\epsilon$-ball $B_\epsilon(x)$. Next, there exists a smaller neighborhood $B_{\epsilon'}(x) \subset B_\epsilon(x)$ in which every two points have a minimal-length path between them that is contained in $B_\epsilon (x)$, and in particular does not intersect $\Gamma \cup \bar \Gamma$. This is true by the boundedness of $c_\PS$ and $\tilde c_\PS$. Namely, picking global bounds $0<m<c_\PS,\tilde c_\PS < M$ and taking $\epsilon' = \epsilon m/(m+2M+1)$, one can verify $d_{\PS}(y,\p B_\epsilon(x))>2 \text{diam}_P \ B_{\epsilon'}(x)$.

Finally, we apply Lemma \ref{l: lens relation recovery}, concluding that $c_{\PS}$ and $\tilde c_{\PS}$ have identical lens relations for covectors $(x,\xi) \in \p T^*\Omega_\tau \cap T^*_-\Omega$ whose bicharacteristics do not intersect any interfaces before returning to $\Sigma_\tau$. Note that the lemma is applied multiple times to recover the lens relation for each wave speed. This is true, in particular, for the geodesics connecting points in $U = B_{\epsilon'}(x) \cap \Sigma_\tau$. Hence, $d^\tau_{\PS} = \tilde d^\tau_{\PS}$ on $U \times U$. Applying local boundary rigidity (Theorem \ref{thm: UV local rigidity} and its corollary), we conclude $c_P = \tilde c_P$  and $c_S = \tilde c_S$ on some neighborhood of $x$, contradicting $x \in \text{ess supp}\ f$.
\newline

\noindent\textbf{\boldmath Interface case: }$x \in \Gamma$. This follows from the above case using Lemma \ref{l: jumps from reflection coeff} and Lemma \ref{l: lens relation recovery}. Indeed, it is those two lemmas that allow us to recover the lens relation on the underside of the interface $\Sigma_\tau$ for both $c_P$ and $c_S$. Similarly to the smooth case, the ball $B_\epsilon(x)$ is constructed to be disjoint from any other interface except for $\Sigma_\tau$ so that rays between points in $\Sigma_\tau$, starting on the underside of $\Sigma_\tau$ in $B_\epsilon (x)$ stay completely in $B_\epsilon(x)$ before returning to $\Sigma_\tau$.
\end{proof}

\section{Discussion}\label{s: discussion}
In the proof of the theorem, we needed to assume the density $\rho =1$ in order that we could recover all Lam\'{e} parameters during the layer stripping, giving us access to the full wave solution in the known layers. In \cite{RachDensity}, Rachele shows how one may use ``lower order polarization'' data to recover the density $\rho$ as well under certain conditions. However, this was in the smooth setting and the result was global since it utilized a global inversion result of an X-ray transform of tensor fields. Since that paper, Stefanov, Uhlmann, and Vasy in \cite{SUVlocaltensor} have shown that one may also obtain local inversion results of the X-ray transform on tensors. Hence, it may be possible to combine Rachele's argument to obtain local, lower order polarization data containing information on the density from the outside measurement operator combined with the result in \cite{SUVlocaltensor} on the local ray transform on tensors to recover the density $\rho$ during our layer stripping procedure. We will pursue this strategy in another work.

\appendix

\section{Computation of reflection and transmission PsiDO's}
\label{app: refl/trans}
Let us recall the traction formulation of the elastic equation, and for simplicity, we assume a flat interface $\{ x_3 = 0\}$. The non-flat case will not require much more work, and we provide details on this later.

Set the unit normal to interface $\Gamma_i$ by $\nu = \col{0\\0\\1}$. Then the traction components are
$$ \t_j = \mathbb{C}\nabla_s u \cdot e_j$$
and the PDE reads
\begin{align*}
\p^2_t u &= \div \col{\t_1\\\t_2\\\t_2} = \p_{x_1}\t_1 + \p_{x_2}\t_2 + \p_{x_3}\t_3 \\
& \Leftrightarrow \p_{x_3}\t_3 = -i \xi_1 \t_1 -i \xi_2 \t_2 - \tau^2 u
\end{align*}
in the fourier regime.
Then the PDE can be put into the form
\begin{equation}
\p_{x_3}\col{u\\\t_3} = A(t,x',D_t,D_x')\col{u\\ \t_3} = \bmat{a_{11} & a_{12} \\ a_{21} & a_{11}^T} \col{u\\\t_3}.
\end{equation}
Here, we have principal symbols
\begin{align*}
 a_{11} &= \frac{1}{i}\bmat{0&0&\xi_1\\0&0&\xi_2\\\alpha \xi_1&\alpha \xi_2& 0}
\qquad a_{12} = \bmat{\mu^{-1} &0&0\\0&\mu^{-1}&0\\0&0&(\lambda + 2\mu)^{-1}}
 \\
a_{21} &= \bmat{ \beta_1\xi_1^2 + \mu \xi_2^2 - \tau^2 & \xi_1\xi_2\beta_2&0\\
\xi_1\xi_2 \beta_2 & \mu \xi_1^2 + \beta_1\xi_2^2-\tau^2 & 0 \\ 0 &0&-\tau^2}
\qquad a_{22} = a_{11}^T
\end{align*}
where
$$ \alpha = \frac{\lambda}{\lambda+2\mu} \qquad \beta_1 = 4\mu \frac{\lambda+\mu}{\lambda+2\mu} \qquad\beta_2 = \mu\frac{3\lambda+2\mu}{\lambda+2\mu}.$$

Also, the eigenvectors of $A$ are easy to find. Indeed one may define a $3\times 3$ matrix $\tilde{\t}_3 = \tilde{\t}_3(\xi_3)$ so that

 $$\t_3 = \tilde{\t}_3 u.$$
 Indeed, one merely takes
 \[
 \tilde{\t}_3 = i\bmat{\mu \xi_3 & 0 & \mu \xi_1 \\
 0& \mu \xi_3 & \mu \xi_2 \\
 \lambda \xi_1 & \lambda \xi_2 & (\lambda + 2\mu)\xi_3 }.
 \]
 Indeed, let $v$ be an eigenvector of the principal symbol $p$ so that
\begin{align*}
pv &= (\tau^2-c^2_{\PS}|\xi|^2)v \\
&\Leftrightarrow (\tau^2 \Id + i\xi_1 \tilde{\t}_1 +i\xi_2 \tilde{\t}_2 + i\xi_3\tilde{\t}_3)v = (\tau^2-c^2_{\PS}|\xi|^2)v \\
&\Leftrightarrow (\tau^2 \Id + i\xi_1 \tilde{\t}_1 +i\xi_2 \tilde{\t}_2 \pm i\xi_{3,\PS}\tilde{\t}_3)v = 0
\text{ if we set }\xi_3 = \pm \xi_{3,\PS} \\
\pm i\xi_{3,\PS}\tilde{\t}_3 v &=-( \tau^2 \Id + i\xi_1 \tilde{\t}_1 +i\xi_2 \tilde{\t}_2)v = a_{21}v + a_{22}\tilde{\t}_3(v).
\end{align*}
Thus, $\col{v \\ \tilde{\t}_3(v)}$ is an eigenvector of $A$ with eigenvalues $\pm i\xi_{3,\PS}.$

It will be useful to denote these normal momenta components by
\[
a_{\PS} = \sqrt{|\xi'|^2 - c_{\PS}^{-2}\tau^2}
\]
where we later use a superscript to distinguish which side of the interface we are considering.
Hence, we may form the $6 \times 6$ matrix of eigenvectors
\[
 S = \bmat{ |&|&|&|&|&| \\s_{P,+} & s_{sH,+}& s_{sV,+} & s_{P,-} & s_{sH,-}& s_{sV,-} \\ |&|&|&|&|&|}
\]
and corresponding eigenvalue diagonal matrix $\Lambda = \text{diag}(i\xi_{3,P},i\xi_{3,S},i\xi_{3,S},-i\xi_{3,P},-i\xi_{3,S},-i\xi_{3,S}).$
By construction, one has
$$ AS = S\Lambda.$$
Then form
$$ K = \bmat{ 0_3 & I_3 \\ I_3 & 0_3}.$$
Then we find $KA = (KA)^T$ by the symmetries of $A$. Hence
\begin{align*}
S^TKA &= S^T(KA)^T = (KAS)^T = (KS\Lambda)^T = \Lambda S^T K
\\
&\Rightarrow S^TKA = \Lambda S^T K \\
& \Rightarrow S^TKS\Lambda = S^TKAS = \Lambda S^T K S
\end{align*}
So $S^TKS$ commutes with a invertible diagonal matrix and hence must be diagonal. The first 3 columns will be a positive eigenvalue while the last 3 will be negative. Hence, if we rescale the columns of $S$ and define
$J = \bmat{I_3&0_3\\0_3&-I_3}$ then $JS^TKS$ will be the identity. Hence, under the rescaling we obtain

$$ S^{-1} = JS^TK$$
Then labelling $V_{\pm}$ as $3 \times 3$ matrices of eigenvectors, we have
$$ S = \bmat{ V_+ & V_-\\ \tilde{\t}_3V_+ & \tilde{\t}_3(V_-)}.$$
Hence,
$$ S^{-1} = \bmat{ \tilde{\t}_3(V_+)^T & V_+^T \\ -\tilde{\t}_3(V_-)^T & -V_-^T}.$$

More explicitly, one has
$$
V_{\pm} = \bmat{\xi_1 & \pm \xi_1a_S & -\xi_2 \\
            \xi_2 & \pm \xi_2 a_S & \xi_1\\
            \mp a_P &|\xi'|^2 & 0}
$$
And
\[
\tilde{t}_3 V_{\pm} = i\bmat{\mp 2\mu \xi_1 a_P & -2\mu \xi_1 \chi & \pm \mu a_S \xi_2 \\
                            \mp 2\mu \xi_2 a_P &  -2\mu \xi_2 \chi &  \mp \mu a_S \xi_1\\
                            2\mu \chi        &  \mp 2\mu |\xi'|^2a_S &   0
}
\]
where $\chi = \frac{1}{2}\tau^2/\mu - |\xi'|^2$.
Hence, we have
\[
S = \bmat{\xi_1 &  \xi_1a_S & -\xi_2  &    \xi_1 &  -\xi_1a_S & -\xi_2 \\
          \xi_2 &  \xi_2 a_S & \xi_1&  \xi_2 &  -\xi_2 a_S & \xi_1\\
          - a_P &|\xi'|^2 & 0        &     a_P &|\xi'|^2 & 0 \\

           -2\mu \xi_1 a_P & -2\mu \xi_1 \chi &  \mu a_S \xi_2 &  2\mu \xi_1 a_P & -2\mu \xi_1 \chi & -\mu a_S \xi_2 \\
 -2\mu \xi_2 a_P &  -2\mu \xi_2 \chi &   -\mu a_S \xi_1 &  2\mu \xi_2 a_P &  -2\mu \xi_2 \chi &   \mu a_S \xi_1\\
                            2\mu \chi        &   -2\mu |\xi'|^2a_S &   0 &2\mu \chi        &   2\mu |\xi'|^2a_S &   0
}
\]
And
\[
S^TK= \bmat{
-2\mu \xi_1 a_P & -2\mu \xi_2 a_P & 2\mu \chi &    \xi_1 & \xi_2 & -a_P \\
-2\mu \xi_1\chi &-2\mu\xi_2 \chi & -2\mu|\xi'|^2a_S & \xi_1 a_S&\xi_2 a_S&|\xi'|^2\\
\mu a_S\xi_2 & -\mu a_S \xi_1 & 0 & -\xi_2 &\xi_1 & 0 \\

2\mu \xi_1 a_P & 2\mu \xi_2 a_P & 2\mu \chi &    \xi_1 & \xi_2 & a_P \\
-2\mu \xi_1\chi &-2\mu\xi_2 \chi & 2\mu|\xi'|^2a_S & -\xi_1 a_S&-\xi_2 a_S&|\xi'|^2\\
-\mu a_S\xi_2 & \mu a_S \xi_1 & 0 & -\xi_2 &\xi_1 & 0
}
\]
A quick calculation shows
\[
S^TKS = \text{diag}(-2\tau^2a_P, -2\tau^2|\xi'|^2a_S,-2\tau^2a_S,2\tau^2a_P,2\tau^2|\xi'|^2a_S,2\tau^2a_S)=D.
\]
Thus, $S^{-1} = D^{-1}S^TK.$

Next is useful to define
\[
\tilde{E} = \bmat{ \xi_1 &\xi_2&0\\
0&0&1\\
-\xi_2&\xi_1&0}, \qquad
\tilde{E}^{-1} = \frac{1}{|\xi'|^2}\bmat{ \xi_1 &0&-\xi_2\\
\xi_2&0&\xi_1\\
0&|\xi'|^2&0}.
\]
And set $E = \bmat{\tilde{E}&0\\0&\tilde{E}}.$ Then
\[
ES = \bmat{|\xi'|^2 &|\xi'|^2a_S&0&|\xi'|^2&-|\xi'|^2a_S&0\\
-a_P&|\xi'|^2&0&a_P&|\xi'|^2&0 \\
0&0&|\xi'|^2&0&0&|\xi'|^2\\
-2\mu|\xi'|^2a_P& -2\mu|\xi'|^2\chi&0& 2\mu|\xi'|^2 a_p &-2\mu|\xi'|^2\chi&0 \\
2\mu\chi & -2\mu|\xi'|^2a_S & 0 &2\mu \chi & 2\mu|\xi'|^2a_S&0\\
0&0& -\mu|\xi'|^2a_S& 0& 0& \mu|\xi'|^2a_S
}
\]
Then
\[
(ES)^{-1} = S^{-1}E^{-1} = D^{-1}S^TKE^{-1}
\]
and
\[
|\xi'|^2 S^TKE^{-1}=
\bmat{
-2\mu|\xi'|^2 a_P& 2\mu |\xi'|^2\chi&0&|\xi'|^2&-a_P|\xi'|^2&0\\
-2\mu|\xi'|^2\chi&-2\mu|\xi'|^4a_S&0&|\xi'|^2a_S&|\xi'|^4&0\\
0&0&-\mu|\xi'|^2a_S&0&0&|\xi'|^2\\
2\mu|\xi'|^2 a_P& 2\mu |\xi'|^2\chi&0&|\xi'|^2&a_P|\xi'|^2&0\\
-2\mu|\xi'|^2\chi&2\mu|\xi'|^4a_S&0&-|\xi'|^2a_S&|\xi'|^4&0\\
0&0&\mu|\xi'|^2a_S&0&0&|\xi'|^2
}.
\]

Note that if we are away from normal incidence so that $|\xi'|^2\neq 0$, we may use this as an elliptic factor to make $S$ and $E$ order $0$ as long as we make $A$ order $1$. Then, we may remove all instances of $|\xi'|$ appearing in the above formulas and replace the appearance of $\tau$ with $\hat{\tau} = \tau /|\xi'|$.
Denoting the interface as $\Gamma$, we denote $U^{(i)} = S^{(i)}V^{(i)}$ as microlocal solutions to the PDE
with interface conditions given by
\[
   S^{(1)}V^{(1)} = S^{(2)}V^{(2)} \text{ on }\Gamma
\]
Now the components of $V^{(1)}$ represent an incident ``downgoing'' wave and the reflected ``upgoing'' wave
\[
 V^{(1)} = \col{ v_I^{(1)}\\ Rv_I^{(1)}}, \qquad  V^{(2)} = \col{ Tv_I^{(1)}\\ 0 }
\]
Thus, we obtain
\[
\col{ v_I^{(1)}\\ Rv_I^{(1)}} = (S^{(1)})^{-1}S^{(2)}\col{ Tv_I^{(1)}\\ 0 } := Q\col{ Tv_I^{(1)}\\ 0 }.
\]
So writing $Q = \bmat{Q_{11} & Q_{12} \\ Q_{21}&Q_{22}}$, where each entry is a $3 \times 3$ block matrix, we obtain the two equations
\begin{align*}
I = Q_{11}T \text{ and } R = Q_{21}T
\end{align*}
So if we have $Q_{11}$ being microlocally invertible, we would obtain $T = Q_{11}^{-1}$ and $R = Q_{21}Q_{11}^{-1}$.
Notice that $(S^{(1)})^{-1}S^{(2)}= D^{-1}((S^{(1)})^TKE^{-1})(ES^{(2)})$ so it will suffice to show that
$[((S^{(1)})^TKE^{-1})(ES^{(2)})]_{11}$ (the first $3\times 3$ subblock) is invertible.

By looking at the structure of $ES$ and $S^TKE^{-1}$, namely that each of the four sublocks have a block structure consisting of a $2 \times 2$ matrix, and a $1 \times 1$ matrix, and the $1 \times 1$ pieces are trivial, it will suffice to analyze the remaining $2 \times 2$ constituents.
Then the first $2\times2$ minor of this matrix is given by the multiplication of
\[
\bmat{-2\mu_1a_P^{(1)}&2\mu_1 \chi^{(1)}&1&-a_P^{(1)} \\
-2\mu_1\chi^{(1)}&-2\mu_1a_S^{(1)}&a_S^{(1)}&1
}
\bmat{
1&a_S^{(2)}\\
-a_P^{(2)}&1\\
-2\mu_2a_P^{(2)}&-2\mu_2\chi^{(2)}\\
2\mu_2\chi^{(2)}&-2\mu_2a_S^{(2)}
}
= \bmat{t_{11}&t_{12}\\
t_{21}&t_{22}}.
\]
So
\begin{align*}
t_{11}&= -2\mu_1a_P^{(1)}-2\mu_1\chi^{(1)}a_P^{(2)}-2\mu_2a_P^{(2)} -2\mu_2\chi^{(2)}a_P^{(1)}
\\
&=-2\mu_1a_P^{(1)}-\htau^2a_P^{(2)}+2\mu_1a_P^{(2)}
-2\mu_2a_P^{(2)} -\htau^2a_P^{(1)}+2\mu_2a_P^{(1)}
\\
&= -\htau^2(a_P^{(1)}+a_P^{(2)})-2a_P^{(1)}(\mu_1-\mu_2)+2a_P^{(2)}(\mu_1-\mu_2)
\\
&= -\htau^2(a_P^{(1)}+a_P^{(2)})-2(a_P^{(1)}-a_P^{(2)})(\mu_1-\mu_2).
\end{align*}

Next
\begin{align*}
t_{21}&= -2\mu_1\chi^{(1)}+2\mu_1a_S^{(1)}a_P^{(2)}-2\mu_2a_P^{(2)}a_S^{(1)}+2\mu_2\chi^{(2)}
\\
&=-\htau^2+2\mu_1+2\mu_1a_S^{(1)}a_P^{(2)}
-2\mu_2a_P^{(2)}a_S^{(1)}+\htau^2-2\mu_2 \\
&= 2(\mu_1-\mu_2)+2a_S^{(1)}a_P^{(2)}(\mu_1-\mu_2)\\
&= (2+2a_S^{(1)}a_P^{(2)})(\mu_1-\mu_2).
\end{align*}
Next
\begin{align*}
t_{12}&= -2\mu_1a_P^{(1)}a_S^{(2)}+2\mu_1\chi^{(1)}-2\mu_2\chi^{(2)}+2\mu_2a_P^{(1)}a_S^{(2)}\\
&= -2\mu_1a_P^{(1)}a_S^{(2)}+\htau^2-2\mu_1-\htau^2+2\mu_2 +                                            2\mu_2a_P^{(1)}a_S^{(2)}\\
&= -2(\mu_1 - \mu_2)-2a_P^{(1)}a_S^{(2)}(\mu_1-\mu_2)\\
&=-(2+2a_P^{(1)}a_S^{(2)})(\mu_1-\mu_2).
\end{align*}
Then
\begin{align*}
t_{22}&= -2\mu_1\chi^{(1)}a_S^{(2)}-2\mu_1a_S^{(1)}-2\mu_2\chi^{(2)}a_S^{(1)}-2\mu_2a_S^{(2)}\\
&=-\htau^2a_S^{(2)}+2\mu_1a_S^{(2)}-2\mu_1a_S^{(1)}-\htau^2a_S^{(1)}+2\mu_2a_S^{(1)}
-2\mu_2a_S^{(2)}\\
&=-\htau^2(a_S^{(1)}+a_S^{(2)})+2a_S^{(2)}(\mu_1-\mu_2)-2a_S^{(1)}(\mu_1-\mu_2)\\
&=-\htau^2(a_S^{(1)}+a_S^{(2)})-2(a_S^{(1)}-a_S^{(2)})(\mu_1-\mu_2).
\end{align*}

It is worth noting here that $t_{21}$ and $t_{12}$ vanish when the parameters are equal, while the other two terms do not. This just means there is transmission of the $P$ and $S$ waves with no mode conversions, as to be expected when there are no interfaces.

So
\begin{align*}
\text{det} &= t_{11}t_{22}- t_{21}t_{12}\\
t_{11}t_{22}&= \htau^4(a_S^{(1)}+a_S^{(2)})(a_P^{(1)}+a_P^{(2)})\\
&\qquad +2\htau^2(a_S^{(1)}+a_S^{(2)})(a_P^{(1)}-a_P^{(2)})(\mu_1-\mu_2)
+2\htau(a_P^{(1)}+a_P^{(2)})(a_S^{(1)}-a_S^{(2)})(\mu_1-\mu_2)\\
&\qquad +4(a_S^{(1)}-a_S^{(2)})(a_P^{(1)}-a_P^{(2)})(\mu_1-\mu_2)^2\\
&= \htau^4(a_S^{(1)}+a_S^{(2)})(a_P^{(1)}+a_P^{(2)})\\
&\qquad +4\htau^2(a_P^{(1)}a_S^{(1)}-a_P^{(2)}a_S^{(2)})(\mu_1-\mu_2)\\
&\qquad +4(a_S^{(1)}-a_S^{(2)})(a_P^{(1)}-a_P^{(2)})(\mu_1-\mu_2)^2\\
&= \htau^4(a_S^{(1)}a_P^{(2)}+a_S^{(2)}a_P^{(1)})
+\textcolor{red}{\htau^4(a_S^{(1)}a_P^{(1)}+a_S^{(2)}a_P^{(2)})}\\
&\qquad +(\htau^2+2(\mu_1-\mu_2))^2a_P^{(1)}a_S^{(1)}+(\htau^2-2(\mu_1-\mu_2))^2a_P^{(2)}a_S^{(2)}\\
&\qquad \textcolor{red}{ -\htau^4 a_P^{(1)}a_S^{(1)}-4a_P^{(1)}a_S^{(1)}(\mu_1-\mu_2)^2
-\htau^4 a_P^{(2)}a_S^{(2)}-4a_P^{(2)}a_S^{(2)}(\mu_1-\mu_2)^2}\\
&\qquad +4(a_S^{(1)}-a_S^{(2)})(a_P^{(1)}-a_P^{(2)})(\mu_1-\mu_2)^2\\
&= \htau^4(a_S^{(1)}a_P^{(2)}+a_S^{(2)}a_P^{(1)})\\
&\qquad +(\htau^2+2(\mu_1-\mu_2))^2a_P^{(1)}a_S^{(1)}+(\htau^2-2(\mu_1-\mu_2))^2a_P^{(2)}a_S^{(2)}\\
&\qquad \textcolor{red}{ -4a_P^{(1)}a_S^{(1)}(\mu_1-\mu_2)^2
-4a_P^{(2)}a_S^{(2)}(\mu_1-\mu_2)^2}\\
&\qquad +4(a_S^{(1)}-a_S^{(2)})(a_P^{(1)}-a_P^{(2)})(\mu_1-\mu_2)^2\\
&= \htau^4(a_S^{(1)}a_P^{(2)}+a_S^{(2)}a_P^{(1)})\\
&\qquad +(\htau^2+2(\mu_1-\mu_2))^2a_P^{(1)}a_S^{(1)}+(\htau^2-2(\mu_1-\mu_2))^2a_P^{(2)}a_S^{(2)}\\
&\qquad \textcolor{red}{ -4a_P^{(1)}a_S^{(1)}(\mu_1-\mu_2)^2
-4a_P^{(2)}a_S^{(2)}(\mu_1-\mu_2)^2}\\
&\qquad \textcolor{red}{+4(a_P^{(1)}a_S^{(1)}+a_P^{(2)}a_S^{(2)})(\mu_1-\mu_2)^2}
\textcolor{blue}{-4(a_S^{(1)}a_P^{(2)}+a_S^{(2)}a_P^{(1)})(\mu_1-\mu_2)^2}\\
&= \htau^4(a_S^{(1)}a_P^{(2)}+a_S^{(2)}a_P^{(1)})\\
&\qquad +(\htau^2+2(\mu_1-\mu_2))^2a_P^{(1)}a_S^{(1)}+(\htau^2-2(\mu_1-\mu_2))^2a_P^{(2)}a_S^{(2)}\\
&\qquad \textcolor{blue}{-4(a_S^{(1)}a_P^{(2)}+a_S^{(2)}a_P^{(1)})(\mu_1-\mu_2)^2}
\end{align*}
Next, we have
\begin{align*}
-t_{21}t_{12} &= (2+2a_P^{(1)}a_S^{(2)})(2+2a_P^{(2)}a_S^{(1)})(\mu_1-\mu_2)^2\\
&=4(1+a_P^{(1)}a_P^{(2)}a_S^{(1)}a_S^{(2)})(\mu_1-\mu_2)^2
\textcolor{blue}{+4(a_S^{(1)}a_P^{(2)}+a_S^{(2)}a_P^{(1)})(\mu_1-\mu_2)^2}
\end{align*}
Thus, after cancelling the relevant terms, we obtain a nonzero determinant as long as $a^{(j)}_P$ and $a^{(j)}_S$ are not all complex:
\begin{align*}
\text{det}&= \htau^4(a_S^{(1)}a_P^{(2)}+a_S^{(2)}a_P^{(1)})\\
&\qquad +(\htau^2+2(\mu_1-\mu_2))^2a_P^{(1)}a_S^{(1)}+(\htau^2-2(\mu_1-\mu_2))^2a_P^{(2)}a_S^{(2)}\\
&\qquad +4(1+a_P^{(1)}a_P^{(2)}a_S^{(1)}a_S^{(2)})(\mu_1-\mu_2)^2.
\end{align*}
Next, notice that $t_{13},t_{23},t_{31},t_{32} = 0$. We may also calculate
\[
t_{33}= -\mu^{(2)}a_S^{(2)} - \mu^{(1)}a_S^{(1)} \neq 0
\]
away from glancing and this concludes our proof that $T$ is microlocally invertible in the relevant region.\\
\subsection*{Proof of Lemma \ref{l: jumps from reflection coeff}}

\label{appendix: proof of jumps from refl coeff}
We can now do the tedious computation required to prove Lemma \ref{l: jumps from reflection coeff}, which states that one may recover the infinitesimal jumps in wave speeds from the reflection coefficients.
\newline

\begin{proof}[Proof of Lemma \ref{l: jumps from reflection coeff}]
We would like to compute $R= Q_{21}T$ as well, or at the least check that it is invertible. As before, it suffices to check $[((S^{(1)})^TKE^{-1})(ES^{(2)})]_{21}$ is invertible. Then the first $2 \times 2$ minor of this matrix is given by the multiplication of

\[
\bmat{2\mu_1a_P^{(1)}&2\mu_1 \chi^{(1)}&1&a_P^{(1)} \\
-2\mu_1\chi^{(1)}&2\mu_1a_S^{(1)}&-a_S^{(1)}&1
}
\bmat{
1&a_S^{(2)}\\
-a_P^{(2)}&1\\
-2\mu_2a_P^{(2)}&-2\mu_2\chi^{(2)}\\
2\mu_2\chi^{(2)}&-2\mu_2a_S^{(2)}
}
= \bmat{z_{11}&z_{12}\\
z_{21}&z_{22}}.
\]

First, we have
\begin{align*}
z_{11}&=2\mu_1a_P^{(1)}-2\mu_1\chi^{(1)}a_P^{(2)}-2\mu_2a_P^{(2)}+2\mu_2 a_P^{(1)}\chi^{(2)} \\
&=
2\mu_1a_P^{(1)}-a_P^{(2)}(\hat \tau^2-2\mu_1)-2\mu_2a_P^{(2)}+a_P^{(1)}(\hat \tau^2-2\mu_2)\\
&=
\hat \tau^2(a_P^{(1)}-a_P^{(2)})+2a_P^{(1)}(\mu_1-\mu_2)+2a_P^{(2)}(\mu_1-\mu_2)\\
&= \hat \tau^2(a_P^{(1)}-a_P^{(2)})+2(a_P^{(1)}+a_P^{(2)})(\mu_1-\mu_2).
\end{align*}

Next, we have
\begin{align*}
z_{21}&= -2\mu_1\chi^{(1)}-2\mu_1a_P^{(2)}a_S^{(1)}+2\mu_2a_P^{(2)}a_S^{(1)} + 2\mu_2 \chi^{(2)}\\
&=
-(\hat \tau^2-2\mu_1) -2a_P^{(2)}a_S^{(1)}(\mu_1-\mu_2) + (\hat \tau^2-2\mu_2)\\
&=
2(\mu_1-\mu_2)-2a_P^{(2)}a_S^{(1)}(\mu_1-\mu_2)
\\
&=
(2-2a_P^{(2)}a_S^{(1)})(\mu_1-\mu_2).
\end{align*}
Continuing,
\begin{align*}
z_{12}&= 2\mu_1a_P^{(1)}a_S^{(2)}+2\mu_1 \chi^{(1)}-2\mu_2\chi^{(2)}-2\mu_2a_P^{(1)}a_S^{(2)}\\
&=2a_P^{(1)}a_S^{(2)}(\mu_1-\mu_2)-2\mu_1+2\mu_2\\
&=(-2+2a_P^{(1)}a_S^{(2)})(\mu_1-\mu_2).
\end{align*}
Lastly,
\begin{align*}
z_{22}&= -2\mu_1a_S^{(2)}\chi^{(1)}+2\mu_1a_S^{(1)}+2\mu_2 a_S^{(1)}\chi^{(2)}-2\mu_2a_S^{(2)}\\
&= -a_S^{(2)}(\hat \tau^2-2\mu_1)+2\mu_1a_S^{(1)}+a_S^{(1)}(\hat \tau^2-2\mu_2)-2\mu_2 a_S^{(2)}\\
&=\hat \tau^2(a_S^{(1)}-a_S^{(2)}) +2a_S^{(2)}(\mu_1-\mu_2)+2a_S^{(1)}(\mu_1-\mu_2)\\
&=\hat \tau^2(a_S^{(1)}-a_S^{(2)}) +2(a_S^{(1)}+a_S^{(2)})(\mu_1-\mu_2).
\end{align*}
We also have
\[
z_{33} =  -\mu^{(2)}a_S^{(2)} + \mu^{(1)}a_S^{(1)}
\]

It will be convenient to denote $R=\bmat{r_{11}&r_{12}&r_{13}\\
r_{21}&r_{22} & r_{23}\\r_{31}&r_{32}&r_{33}}$ the individual entries.
Next, notice that $r_{13},r_{23},r_{31},r_{32} = 0$ since the corresponding entries for $T$ and $Q_{21}$ are as well. Using the calculation for $T$, we may calculate
\[
r_{33}= \frac{\mu^{(1)}a_S^{(1)} - \mu^{(2)}a_S^{(2)}}{ \mu^{(1)}a_S^{(1)} + \mu^{(2)}a_S^{(2)}}.
\]
We may then compute $(r_{33}-1)/(r_{33}+1) = (\mu^{(2)}a_S^{(2)})/(\mu^{(1)}a_S^{(1)})$, so
since $\mu^{(1)}$ and $a_S^{(1)}$ is already determined, we recover $\mu^{(2)}a_S^{(2)} = \sqrt{\mu^{(2)}|\xi'|^2 - \tau^2}$. Since the tangential momenta $\xi',\tau$ are already determined, we recover $\mu^{(2)}(x_0)$ and thereby $a_S^{(2)}(x_0,\tau_0,\xi_0)$.

All we have left to determine is $a_P^{(2)}$ which would give us $\lambda^{(2)}(x_0)$. For this, we use the first $2\times 2$ minor of $T$ and $Q_{21}$, and after a tedious computation, since everything is known except $\lambda^{(2)}$, we get the recovery by similar arguments as above.

We assumed throughout these calculations that $|\xi'|$ lies away from zero. However, at $0$, the calculations are much simpler and follow the same arguments.
\end{proof}

\section{Blind Scattering Control}\label{s: blind scatt control}
In this section, we obtain results on how much scattering may be controlled and eliminated using only the outside measurement operator. This will determine how much of the main results of \cite{CHKUControl} may actually be extended to the elastic setting. We will show that in general, without more information, one does not have control in regions outside of where one has elastic unique continuation theorems. We emphasize that the results here are of a very different nature than in the main paper. In the main text, we layer strip and so we assume knowledge of the wave speeds in a certain portion of the medium. Here, we have no \emph{a priori} knowledge of the Lam\'{e} parameters inside the medium, and we aim to determine what type of probing may be done into the medium using time reversal methods.

\subsubsection*{Subsets}
We will need certain subsets of $\Theta$ determined by the boundary distance function.
For each $t$, define the open sets
\begin{align*}
\Theta_t &= \set*{x\in \Upsilon}{d^*_{\Theta}(x)>t},&
\Theta_t^\star&=\set*{x \in \Upsilon}{d^*_{\Theta}(x)<t}.
\end{align*}

There is some $M$ for which $\Theta_t$ is Lipschitz for each $t\leq M$, and we assume for this section that $\Theta_t$ is Lipschitz for the parameters $t$ we consider.
We use the superscript $\star$ to indicate sets and function spaces lying outside, rather than inside, some region. Sets $\Omega_t^{\ },\, \Omega_t^\star$ may be defined in the same way. We also have $\Theta^*=\Theta_0^*$.

\subsubsection*{Projections Inside and Outside $\Theta_t$}

The final ingredients needed for the iterative scheme are restrictions of Cauchy data inside and outside $\Theta$. While a hard cutoff is natural, it is not a bounded operator in energy space: a jump at $\p \Theta$ will have infinite energy. The natural replacement are Hilbert space projections. More generally, we consider projections inside and outside $\Theta$.
Let $\pi^\star$ be the orthogonal projections of $\mathbf C$ onto $\mathbf H^\star$; let $\bar{\pi} = \Id - \pi^\star$. For $u \in \mathbf C$, note that $\pi^\star u = u$ on $\Theta$ while outside $\Theta$ it is smooth since it solves an elliptic equation with smooth Lam\'{e} parameters.

\subsubsection*{Almost direct transmission}
Suppose we have Cauchy data $h_0\in\mathbf H$. We can probe $\Omega$ with $h_0$ and observe $Rh_0$ outside $\Omega$. In particular, the reflected data $\pi^\star R$ can be measured, and from these data, we would like to procure information about $c_{\PS}$ inside $\Omega$. However, multiple scattering caused by waves travelling into and out of $\Omega$ make $\pi^\star Rh_0$ difficult to interpret.

In this section, we construct a control in $\mathbf H^\star$ that eliminates multiple scattering in the wave field of $h_0$ up to a certain depth dependant on $T$ inside $\Theta$. More specifically, consider the \emph{almost direct transmission} of $h_0$:

\begin{definition}\label{def: almost dt}
	The \emph{almost direct transmission}, denoted $h_{\text{DT}}$, of $h_0\in\mathbf H$ at time $T$ is $\bar \pi_T R_Th_0$, where $\bar \pi_T$ is defined the same way as $\bar \pi$ but replacing $\Theta$ with $\Theta_T$. 
\end{definition}
We note since it will end up being impossible to actually recover the direct transmission using outside data, we do not give a more natural definition of $\pi_T$, but merely use a convenient one to illustrate a particular point in the next lemma.

By finite speed of propagation, $h\DT$ is equal to $R_Th_0$ inside $\Theta_T$; outside $\Theta_T$, its first component satisfies the elliptic Laplace equation, while the second component is extended by zero.

\subsubsection*{Scattering Control Series}
We want Cauchy data of the form
\[
h_{\infty} = K\tail + h_0
\]
with $K\tail$ supported outside of $\Theta$ such that $K\tail$ cancels multiples produced by $h_0$ and allows us to recover the directly transmitted constituent of $R_T h_{\infty}$ (even though we will show this to be impossible in the elastic setting without more information). More specifically, $K\tail$ will be in $\mathbf H^\star$ and it is the control discussed in the previous subsection.

Next, let us write down the \emph{scattering control equation} first introduced in \cite{CHKUControl}. With $h_0$ and $h_\infty$ as above, we say $h_\infty$ is a solution to the scattering control equation if the following holds
\begin{equation}
(I-\pi^\star R \pi^\star R)h_{\infty} = h_0.
\end{equation}

Using only finite speed of propagation, we can obtain a necessary condition for $K\tail$ in order to control multiple scattering.

\begin{lemma}\label{l: SC in necessary for K_tail}
Let $h_0 \in \mathbf{H}$ and $T \in (0,\frac{1}{2}\text{diam}_p\Theta)$. Then isolating the deepest part of the wavefield associated to $h_0$ implies $K\tail \in \mathbf H^\star$ satisfies the scattering control equation:
\begin{equation}\label{eq: scattering control}
R_{-T}\bar{\pi}R_{2T}h_{\infty} = h_{DT} \Longrightarrow (I-\pi^\star R \pi^\star R)h_{\infty} = h_0.
\end{equation}
\end{lemma}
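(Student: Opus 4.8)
The plan is to convert the implication into pure operator algebra on $\mathbf C$ together with a single finite-speed fact. First I would record the structural relations for $R=\nu\circ R_{2T}$. Since $\nu^2=\Id$ and $\nu R_s\nu=R_{-s}$ (time reversal conjugates forward into backward propagation), one has $\nu R=R_{2T}$, $R\nu=\nu R_{2T}\nu=R_{-2T}$, and $R^2=\nu R_{2T}\nu R_{2T}=R_{-2T}R_{2T}=\Id$, so $R$ is an involution on $\mathbf C$. Because $\nu$ merely flips the velocity component it is an isometry of the energy inner product preserving $\mathbf H^\star$, whence $\nu\pi^\star=\pi^\star\nu$ and $\nu\bar\pi=\bar\pi\nu$. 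Next I would note that $h_0\in\mathbf H$ is supported in $\Theta$ while $\mathbf H^\star$ consists of data supported in the complementary region $\Theta^\star$, so $h_0\perp\mathbf H^\star$ and $\pi^\star h_0=0$; combined with $\pi^\star K\tail=K\tail$ (as $K\tail\in\mathbf H^\star$) this gives $\pi^\star h_\infty=K\tail$. Since $h_\infty-h_0=K\tail$, the target equation $(I-\pi^\star R\pi^\star R)h_\infty=h_0$ is \emph{equivalent} to $\pi^\star R\pi^\star R\,h_\infty=K\tail$.

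\textbf{Using the hypothesis.} Applying $R_T$ to the hypothesis gives $\bar\pi R_{2T}h_\infty=R_T h_{DT}$. Then, using $\pi^\star=I-\bar\pi$ and $\nu\pi^\star=\pi^\star\nu$,
\[
\pi^\star R h_\infty=\nu\pi^\star R_{2T}h_\infty=\nu R_{2T}h_\infty-\nu\bar\pi R_{2T}h_\infty=R h_\infty-\nu R_T h_{DT}.
\]
Applying $R$ and using $R^2=\Id$, $R\nu=R_{-2T}$, $R_{-2T}R_T=R_{-T}$ yields
\[
R\pi^\star R h_\infty=h_\infty-R_{-T}h_{DT}.
\]
Finally applying $\pi^\star$ and inserting $\pi^\star h_\infty=K\tail$ gives
\[
\pi^\star R\pi^\star R h_\infty=K\tail-\pi^\star R_{-T}h_{DT}.
\]
Thus the entire lemma reduces to the single claim $\pi^\star R_{-T}h_{DT}=0$, i.e.\ that $R_{-T}h_{DT}$ is stationary on $\Theta^\star$.

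\textbf{The finite-speed step (the crux).} This is where the only genuine work lies, and the reason the result rests on finite speed of propagation. By definition $h_{DT}=\bar\pi_T R_T h_0$ equals $R_T h_0$ on $\Theta_T$ and is stationary (zero velocity, $L$-harmonic first component) off $\Theta_T$, so its non-stationary part is confined to $\Theta_T$. Fix $x\in\Theta^\star$, so $d^*_\Theta(x)\le 0$. Since $d^*_\Theta$ is $1$-Lipschitz for $d_P$, every $y$ with $d_P(x,y)\le T$ obeys $d^*_\Theta(y)\le d^*_\Theta(x)+T\le T$, so the closed $P$-ball $B_P(x,T)$ misses $\Theta_T$ entirely; hence $h_{DT}$ is stationary on $B_P(x,T)$. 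On this ball the static field $w(t)\equiv(h_{DT})_0$ solves the elastic equation (here $\rho=1$, $\partial_t^2 w=Lw=0$) with the same Cauchy data as $F h_{DT}$, so by uniqueness and finite propagation speed $F h_{DT}$ coincides with $w$ on the whole backward diamond over $B_P(x,T)$; in particular at $(t,x)=(-T,x)$ the velocity vanishes and the first component is $L$-harmonic. As $x\in\Theta^\star$ was arbitrary, $R_{-T}h_{DT}$ is stationary on $\Theta^\star$, which is exactly $\pi^\star R_{-T}h_{DT}=0$, finishing the proof.

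\textbf{Main obstacle.} The one subtlety is that $h_{DT}$ is \emph{not} compactly supported: it carries a stationary tail outside $\Theta_T$, so a naive support argument fails. The care is in comparing $F h_{DT}$ with the static solution on each backward $P$-cone and using the Lipschitz bound on $d^*_\Theta$ to guarantee that, for $x$ outside $\Theta$, this cone of height $T$ avoids the non-stationary core $\Theta_T$. Everything else is bookkeeping with the involution $R$ and the orthogonal projections $\pi^\star,\bar\pi$.
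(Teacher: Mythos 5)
Your proof is correct and follows essentially the same route as the paper's: both reduce the implication by the same operator algebra (using $R^2=\Id$, the commutation of $\nu$ with the projections, and $\bar\pi h_\infty=h_0$) to the single claim $\pi^\star R_{-T}h_{DT}=0$, which is then settled by finite speed of propagation. Your handling of that last step is in fact slightly more careful than the paper's, which simply asserts that $h_{DT}$ is ``supported inside $\Theta_T$'' even though, by Definition~\ref{def: almost dt}, its first component carries a stationary tail outside $\Theta_T$; your domain-of-dependence comparison with the static solution on the backward $P$-cone closes that small gap.
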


\proof
The proof is actually identical to the one in \cite{CHKUControl}, but we provide the argument here just for completeness. Assume $R_{-T}\bar{\pi}R_{2T}h_{\infty} = h_{DT}.$
Observe that due to the disjoint support of $K\tail$ and $h_0$ one has
$\bar \pi h_{\infty} = h_0$. Hence
\begin{align*}
(I-\pi^\star R \pi^\star R)h_{\infty}
&= (I - \pi^\star R (I-\bar \pi)R)h_{\infty}\\
&= h_{\infty}-\pi^\star R^2 h_{\infty} + \pi^\star R \bar \pi Rh_{\infty}\\
&= h_{\infty}-\pi^\star R_{-2T}R_{2T}h_{\infty} + \pi^\star \nu R_{2T}\nu \bar \pi R_{2T}h_{\infty}\\
&=h_{\infty}-\pi^\star h_{\infty} + \pi^\star R_{-2T}(R_T h_{DT})\\
&= h_0 + \pi^\star R_{-T} h_{DT}.
\end{align*}
Hence, we merely need the second term on the right to vanish. Notice $h_{DT}$ is supported inside $\Theta_T$ so by finite speed of propagation, $R_{-T}h_{DT}$ has support inside $\Theta$ since $T$ is based on the fast $P$-wavespeed. Hence, $\pi^\star R_{-T}h_{DT} = 0$. $\Box$

This was the ``easy'' direction. The converse of the above lemma is far from clear and will turn out false. Indeed, the converse would say that if $K\tail$ satisfies a certain equation outside $\Theta$, then we can retrieve information about $u(t) = Fh_{\infty}$ inside $\Theta$ or at least inside $\Theta_T^\star$, revealing the elimination of certain scattering within $\Theta^\star$ by time $T$. This would require a unique continuation theorem, but for the elastic equation, a unique continuation theorem can only give information within the $s$-domain of influence of $h_{\infty}$. Such a theorem would say absolutely nothing inside the remaining portion of the $P$-domain of influence of $h_{\infty}$. Thus, we deem the task of obtaining such a result in this exact framework impossible with known elastic unique continuation results. Nevertheless, we still have a lemma that will help characterize $R_T h_{\infty}$ even in the elastic setting, which also provides a general description of
 which multiply scattered wave constituents may be eliminated.

\begin{lemma}
With the same setup as in the previous lemma, assume
\[(I-\pi^\star R \pi^\star R)h_{\infty} = h_0.\]
Then in the energy inner product, one has
\[
R_{-T}\bar \pi R_{2T}h_\infty \perp \text{Im}(R_T \pi^\star)+\text{Im}( R_{-T} \pi^\star).
\]
Denote $\mathcal C:= \overline{\text{Im}(R_T \pi^\star)+\text{Im}(\nu R_T \pi^\star)}$
the controllable subspace, $\Pi_\mathcal C$ its orthogonal projection with respect to the energy inner product, and $\Pi_{\mathcal C^\perp}$ the orthogonal projection onto its orthogonal complement (the uncontrollable subspace).
Then
\begin{equation}\label{eq: exact proj scat control}
R_{-T}\bar \pi R_{2T}h_\infty = \Pi_{\mathcal C^\perp}R_T h_0.
\end{equation}
Conversely, if $h_{\infty}$ satisfies (\ref{eq: exact proj scat control}), then $h_\infty$ satisfies the scattering control equation.
\end{lemma}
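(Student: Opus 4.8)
The plan is to collapse the whole statement to a single scalar identity, namely that the scattering control equation is equivalent to $\pi^\star R_{-T}w=0$, where I abbreviate $w:=R_{-T}\bar\pi R_{2T}h_\infty$. Everything then drops out of the orthogonal-projection structure of the energy space. I would first record the structural identities that drive the argument: $R=\nu R_{2T}$ satisfies $R^2=\Id$ on $\mathbf C$ (since $\nu R_{2T}\nu=R_{-2T}$ and $R_{2T}$ is an isometric bijection of $\mathbf C$); each $R_s$ is unitary in the energy inner product, so $R_s^*=R_{-s}$; the time reversal $\nu$ commutes with the complementary orthogonal projections $\pi^\star,\bar\pi$; and $\bar\pi h_\infty=h_0$ because $K\tail$ and $h_0$ have disjoint supports. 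Using $\pi^\star=\Id-\bar\pi$ together with $R^2=\Id$ and $R\bar\pi R=\nu R_{2T}\nu\,\bar\pi R_{2T}=R_{-2T}\bar\pi R_{2T}$, I expand
\[
(I-\pi^\star R\pi^\star R)h_\infty=\bar\pi h_\infty+\pi^\star R\bar\pi R h_\infty=h_0+\pi^\star R_{-2T}\bar\pi R_{2T}h_\infty.
\]
Since $R_{-2T}\bar\pi R_{2T}h_\infty=R_{-T}w$, this exhibits the scattering control equation as equivalent to $\pi^\star R_{-T}w=0$.

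Next I would translate this into orthogonality. Because $\pi^\star$ is self-adjoint and $R_{-T}^*=R_T$, for any $g$ one has $\langle w,R_T\pi^\star g\rangle=\langle \pi^\star R_{-T}w,g\rangle$, so $\pi^\star R_{-T}w=0$ says exactly $w\perp\text{Im}(R_T\pi^\star)$. Moreover the orthogonality $w\perp\text{Im}(R_{-T}\pi^\star)$ holds with no hypothesis at all, since $R_Tw=\bar\pi R_{2T}h_\infty$ gives $\pi^\star R_Tw=\pi^\star\bar\pi R_{2T}h_\infty=0$. Assuming the scattering control equation, both orthogonalities hold, yielding the first claim $w\perp\text{Im}(R_T\pi^\star)+\text{Im}(R_{-T}\pi^\star)$; as $\mathcal C$ is the closure of this sum (here $\text{Im}(R_{-T}\pi^\star)=\text{Im}(\nu R_T\pi^\star)$ because $\nu$ commutes with $\pi^\star$ and bijects $\mathbf C$), we obtain $w\in\mathcal C^\perp$, i.e. $\Pi_{\mathcal C^\perp}w=w$.

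To upgrade this to the exact formula (\ref{eq: exact proj scat control}), I would show $w-R_Th_0\in\mathcal C$ and apply $\Pi_{\mathcal C^\perp}$. Writing $\bar\pi=\Id-\pi^\star$ gives $w=R_Th_\infty-R_{-T}\pi^\star R_{2T}h_\infty$, and since $h_\infty=h_0+K\tail$ with $K\tail=\pi^\star K\tail$,
\[
w-R_Th_0=R_T\pi^\star K\tail-R_{-T}\pi^\star R_{2T}h_\infty\in\text{Im}(R_T\pi^\star)+\text{Im}(R_{-T}\pi^\star)\subset\mathcal C.
\]
Hence $\Pi_{\mathcal C^\perp}w=\Pi_{\mathcal C^\perp}R_Th_0$, and combined with $\Pi_{\mathcal C^\perp}w=w$ this is exactly (\ref{eq: exact proj scat control}). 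For the converse, if $w=\Pi_{\mathcal C^\perp}R_Th_0$ then $w\in\mathcal C^\perp\subset(\text{Im}(R_T\pi^\star))^\perp$, so $\pi^\star R_{-T}w=0$, which by the equivalence established in the first step is precisely the scattering control equation.

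I expect the only genuinely delicate point to be the unitarity and adjoint relations $R_s^*=R_{-s}$ in the energy inner product, on which every orthogonality computation rests. These reduce to energy conservation inside $\Upsilon$, which is available here because $d_S(\partial\Upsilon,\overline\Theta)>2T$ prevents any wave from reaching $\partial\Upsilon$ within the time window $[-2T,2T]$ in play, so no energy escapes and the propagators act unitarily on $\mathbf C$. Once that is granted, the proof is pure operator bookkeeping, and the real content is simply isolating the equivalence \emph{scattering control equation} $\Longleftrightarrow \pi^\star R_{-T}w=0$ and reading the two required orthogonalities off of it.
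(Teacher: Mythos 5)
Your proof is correct and follows essentially the same route as the paper: unitarity of $R_s$ in the energy inner product, the reformulation of the scattering control equation as $\pi^\star R_{-2T}\bar\pi R_{2T}h_\infty=0$, the two orthogonality computations, and the decomposition $w-R_Th_0=R_T\pi^\star K\tail-R_{-T}\pi^\star R_{2T}h_\infty\in\mathcal C$ followed by applying $\Pi_{\mathcal C^\perp}$. The only (welcome) differences are cosmetic: you derive the equivalence with $\pi^\star R_{-T}w=0$ explicitly rather than citing it, and your observation that $\pi^\star R_Tw=\pi^\star\bar\pi R_{2T}h_\infty=0$ gives the second orthogonality unconditionally is a slightly cleaner version of the paper's computation with $\nu$.
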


\begin{rem}
The above lemma shows that the scattering control equation produces a tail that eliminates any constituent of the wavefield at time $t=T$ that may be recreated by $R_T \pi^\star$. That is, the wavefield produced by an internal source can be split (microlocally) into a directly transmitted component (experiencing no reflections), singly-reflected components, and multiply-reflected components. Any such multiples that may be artificially created using Cauchy data supported outside $\Theta$ (which is precisely the image of $R_T\pi^\star$) are precisely what get eliminated. Of course, Cauchy data supported in $\Theta^\star$ can never recreate the directly transmitted portion of $h_0$ simply due to finite speed of propagation; hence the directly transmitted piece may never be eliminated, as was obvious \emph{a priori}. Likewise, the single reflections cannot be recreated by Cauchy data in $\mathbf H^\star$ and so those do not get eliminated either.
\end{rem}

\begin{proof}
The proof will take advantage of $R_T$ being a unitary operator with respect to the energy inner product. Next, note that the scattering control equation is equivalent to $\pi^\star R_{-2T}\bar \pi R_{2T}h_\infty = 0$ as shown in \cite{CHKUControl}.
Take any Cauchy data $k \in \mathbf C$. One has
\begin{align*}
\langle R_{-T}\bar \pi R_{2T}h_\infty , R_T\pi^\star k \rangle
&= \langle R_{-2T}\bar \pi R_{2T}h_\infty , \pi^\star k \rangle
\\
&= \langle \pi^\star R_{-2T}\bar \pi R_{2T}h_\infty , \pi^\star k \rangle
+ \langle \bar \pi R_{-2T}\bar \pi R_{2T}h_\infty , \pi^\star k \rangle
\\
&= 0,
\end{align*}
where the first quantity is $0$ by the scattering control equation $(\pi^\star R_{-2T}\bar \pi R_{2T}h_\infty = 0)$ and the second quantity is $0$ because $\pi^\star$ and $\bar\pi$ are orthogonal projectors. For $\nu R_T \pi^\star$, one instead has
\begin{align*}
\langle R_{-T}\bar \pi R_{2T}h_\infty , \nu R_T\pi^\star k \rangle
&=
\langle R_{-T}\nu R_{-T}\bar \pi R_{2T}h_\infty , \pi^\star k \rangle
\\
&=
\langle \nu R_TR_{-T}\bar \pi R_{2T}h_\infty , \pi^\star k \rangle
=
\langle \nu \bar \pi R_{2T}h_\infty , \pi^\star k \rangle = 0.
\end{align*}

Let $\Pi_\mathcal C$ and $\Pi_{\mathcal C^\perp}$ be the orthogonal projections onto $\mathcal C:= \overline{\text{Im}(R_T \pi^\star)+\text{Im}(\nu R_T \pi^\star)}$ and its orthogonal complement, respectively..
By continuity, the above proof shows \[R_{-T}\bar \pi R_{2T}h_\infty = \Pi_{\mathcal C^\perp}R_{-T}\bar \pi R_{2T}h_\infty.\]
On the other hand, one has
\begin{align*}
R_{-T}\bar \pi R_{2T}h_\infty
&=- R_{-T} \pi^\star R_{2T}h_\infty + R_Th_{\infty}
\\
&= -\nu R_T \pi^\star R h_\infty + R_T \pi^\star K\tail
+ R_Th_0
\end{align*}
Notice that $\Pi_{\mathcal C^\perp}$ applied to the first two terms vanishes.
Thus, we have
\[
R_{-T}\bar \pi R_{2T}h_\infty = \Pi_{\mathcal C^\perp}R_Th_0
\]
as desired.

For the last part, suppose that $R_{-T}\bar \pi R_{2T}h_\infty = \Pi_{\mathcal C^\perp}R_T h_0$. Then $R_{-T}\bar \pi R_{2T}h_\infty \perp \text{Im}(R_T\pi^\star)$. Since $R_T$ is unitary in the energy inner product, this implies
\[R_{-2T}\bar \pi R_{2T}h_\infty \perp \text{Im}(\pi^\star).\]
Thus, $R_{-2T}\bar \pi R_{2T}h_\infty$ paired with any Cauchy data supported in $\Theta^\star$ is $0$. Thus, it must be that
\begin{align*}
\pi^\star R_{-2T}\bar \pi R_{2T}h_\infty &=0\\
\Leftrightarrow (I-\pi^\star R \pi^\star R)h_\infty &= 0.
\end{align*}
\end{proof}

\subsection*{Elastic Unique Continuation}
We have the unique continuation principle for smooth parameters $\lambda,\,\mu$, described in \cite{KNTAT} and proved by Belishev in \cite{BelLameType}.
\begin{theorem}
Suppose that $(\p_t^2 - L)u=0$ in $\RR \times \RR^3$ and $u = 0$ in a neighborhood of $(t_0,t_1) \times \{x_0\}$. Then, $u=0$ for all $(t,x)$ inside the double cone
\[
\left\{
(t,x): d_S(x,x_0) +
\left| t- \frac{t_0+t_1}{2}\right| \leq
\frac{t_1 - t_0}{2}
\right\}.
\]
\end{theorem}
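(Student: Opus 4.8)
The plan is to treat $P := \p_t^2 - L$ as a second-order system with smooth, \emph{time-independent} coefficients and to read the geometry of the uniqueness region off its principal symbol. Up to sign, $\det\sigma_2(P)(\tau,\xi) = (\tau^2 - c_S^2|\xi|^2)^2(\tau^2 - c_P^2|\xi|^2)$, so the characteristic variety is $\Sigma_P \cup \Sigma_S$ and the slowest sheet is $\Sigma_S$. The boundary of the claimed region, the level set $\{\psi = h\}$ of $\psi(t,x) := d_S(x,x_0) + |t-\bar t|$ with $\bar t := \tfrac{1}{2}(t_0+t_1)$ and $h := \tfrac{1}{2}(t_1-t_0)$, is swept out by surfaces moving at the slow speed $c_S$; since $c_S^2|\n_x d_S|^2 = 1$, its conormal lies in $\Sigma_S$, so the double cone is exactly the region bounded by the slowest characteristic surfaces issuing from the endpoints of the segment $\{x_0\}\times(t_0,t_1)$. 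This is the sharp shape one expects, and it explains why $d_S$ and not $d_P$ appears: the slow $S$-constituent of $u$ can carry non-vanishing into any point just outside the slow cone while $u$ still vanishes near the segment, so no larger region can be forced to vanish.

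First I would reduce the global statement to a local unique-continuation-across-a-hypersurface statement and foliate the open cone $\{\psi < h\}$ by strictly pseudoconvex, non-characteristic hypersurfaces sweeping outward from a leaf contained in the neighborhood where $u$ already vanishes. The essential structural input is that $\lambda,\mu$ (and $\rho\equiv 1$) are independent of $t$, hence real-analytic in $t$. This places the problem in the partially-analytic-coefficient unique continuation framework of Tataru, Robbiano--Zuily and H\"ormander, whose theorem crosses any hypersurface pseudoconvex with respect to the \emph{non-analytic} (spatial) codirections and delivers the sharp domain of uniqueness bounded by the characteristic cone. It is precisely this analyticity in $t$ that allows the vanishing set to reach the characteristic $S$-cone and hence to fill the \emph{closed} region in the statement; a naive smooth-coefficient Carleman estimate, which only crosses strictly non-characteristic pseudoconvex surfaces, could never reach the $S$-characteristic boundary.

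The analytic core is the pseudoconvexity verification for the elastic \emph{system}. On the sweeping foliation I would check H\"ormander's pseudoconvexity condition against each scalar factor $\tau^2 - c_\PS^2|\xi|^2$ of $\det\sigma_2(P)$; the binding factor is the $S$-factor, and the condition for it implies the one for the $P$-factor at points strictly inside the $S$-cone. Because $\sigma_2(P)$ is diagonalizable with constant multiplicities away from glancing (as in the parametrix section, via the operator $S(x,D)$), the scalar pseudoconvexity for $\det\sigma_2(P)$ upgrades to a genuine unique continuation statement for the coupled system; the lower-order coupling is harmless, since the hypotheses depend only on the principal symbol. Propagating the vanishing set leaf by leaf across the foliation via H\"ormander's connectedness argument fills $\{\psi < h\}$, and continuity of $d_S$ together with the analytic (Tataru) input yields $u = 0$ on the closed cone $\{\psi \le h\}$.

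The main obstacle is exactly the two-speed system structure together with the requirement to reach the \emph{characteristic} slow cone: one cannot avoid either establishing the pseudoconvexity machinery for the coupled elastic operator rather than a scalar wave operator, or invoking the time-analyticity of the coefficients in place of a purely smooth-coefficient argument, and the crux is verifying that the binding pseudoconvexity constraint is governed by the slow speed $c_S$. This is what makes the resulting unique continuation strictly weaker than in the scalar case. Alternatively, since the coefficients are smooth here, one may simply invoke Belishev's unique continuation theorem for the Lam\'e system \cite{BelLameType}, which establishes exactly this $c_S$-governed statement and is the route indicated in \cite{KNTAT}.
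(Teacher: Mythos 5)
The paper does not actually prove this theorem: it is stated as a known result, attributed to Belishev \cite{BelLameType} and described in \cite{KNTAT}, and the subsequent piecewise-smooth version is explicitly taken as an \emph{assumption} (with a pointer to \cite{EINT02} for how one would prove it). So your closing remark --- ``simply invoke Belishev's unique continuation theorem'' --- is precisely the route the paper takes, while the body of your proposal supplies a proof sketch the paper deliberately omits. The sketch itself is a faithful outline of how the result is established in the literature: time-independence of $\lambda,\mu$ makes the coefficients analytic in $t$, the Tataru/Robbiano--Zuily/H\"ormander partially analytic unique continuation theorem crosses surfaces pseudoconvex only in the spatial codirections, and sweeping the double cone by such surfaces yields the sharp region bounded by the \emph{slow} characteristic cone, which is exactly why $d_S$ rather than $d_P$ appears and why the elastic result is weaker than the scalar one (a point the paper emphasizes in its introduction). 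What your route buys is an actual argument and a clear explanation of the geometry; what the paper's route buys is brevity, since the theorem is only used as a black box in Appendix B.

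One step in your sketch deserves a caveat. The claim that microlocal diagonalizability of $\sigma_2(P)$ away from glancing ``upgrades'' scalar pseudoconvexity for the factors of $\det\sigma_2(P)$ to unique continuation for the coupled system is the weakest link: unique continuation is not a microlocal property, and Carleman-estimate arguments do not commute with the microlocal conjugation by $S(x,D)$. The standard way this is actually handled for isotropic elasticity (in \cite{BelLameType} and \cite{EINT02}) is the exact reduction via $\div u$ and $\rot u$, which satisfy scalar wave equations with speeds $c_P$ and $c_S$ coupled only through lower-order terms; one then applies the scalar Carleman/partial-analyticity machinery to that principally decoupled system. If you replace the diagonalization remark by the div--curl reduction, the sketch is a correct account of the known proof.
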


Just as in the scalar wave setting, it is believed that the unique continuation principle holds for Lam\'{e} parameters with certain conormal singularities as well. This can be proven using the method of Eller-Isakov-Nakamura-Tataru for isotropic elasticity \cite{EINT02} and the modifications for a discontinuous sound speed as developed in reference \cite{SU-TATBrain}.
Thus, for the purposes of this work, we merely assume that the Lam\'{e} parameters satisfy the above unique continuation principle, without requiring smoothness.
\\

\noindent{\bf Assumption.} $\lambda,\mu$ satisfy the above unique continuation principle.
\\

We then have the following corollary whose proof is identical to \cite[Lemma 3.5]{KNTAT}:
\begin{cor}
Assume $u$ is a solution to the elastic equation in free space, $T>0$ and $u= 0$ on $[-T,T] \times \Omega^\star$. Then by the unique continuation principle, $u(t,x) = 0$ for all $(t,x)$ such that $d_S(x,\p \Omega) + |t| \leq T$.
\end{cor}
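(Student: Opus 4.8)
The plan is to deduce the corollary from the elastic unique continuation principle stated above by sweeping out a family of double cones emanating from exterior points and taking their union, exactly as in \cite[Lemma 3.5]{KNTAT}. First I would fix an arbitrary point $x_0$ in the open set $\Omega^\star$. Since $\Omega^\star$ is open and $u$ vanishes on $[-T,T]\times\Omega^\star$, $u$ vanishes on a full spacetime neighborhood of $(-T,T)\times\{x_0\}$. Applying the unique continuation theorem with $t_0=-T$, $t_1=T$ (so that the midpoint is $0$ and the half-width is $T$), I conclude that $u=0$ on the double cone
\[
\mathcal K_{x_0}:=\set*{(t,x)}{d_S(x,x_0)+|t|\le T}.
\]

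Next I would take the union of these cones over all $x_0\in\Omega^\star$ and argue that it exhausts, up to its boundary, the claimed region $\set*{(t,x)}{d_S(x,\p \Omega)+|t|\le T}$. Fix $(t,x)$ with $d_S(x,\p \Omega)+|t|<T$. If $x\in\Omega^\star$ we may simply take $x_0=x$. Otherwise $x\in\overline\Omega$, and every path from $x$ to an exterior point crosses $\p \Omega$, so $d_S(x,x_0)\ge d_S(x,\p \Omega)$ for all $x_0\in\Omega^\star$; conversely, choosing a near-minimizing boundary point $p\in\p \Omega$ with $d_S(x,p)$ close to $d_S(x,\p \Omega)$ and taking $x_0\in\Omega^\star$ close to $p$, the triangle inequality for $d_S$ gives $d_S(x,x_0)\le d_S(x,p)+d_S(p,x_0)$, which can be made strictly smaller than $T-|t|$. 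Hence $(t,x)\in\mathcal K_{x_0}$ for this $x_0$, and therefore $u(t,x)=0$.

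This shows $u=0$ on the open set $\set*{(t,x)}{d_S(x,\p \Omega)+|t|<T}$. Since this open set is dense in the closed region $\set*{(t,x)}{d_S(x,\p \Omega)+|t|\le T}$ and $u\in C(\RR;H^1)$ is in particular locally integrable, $u$ vanishes almost everywhere on the closed region as well, which is the stated conclusion.

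The only point requiring care—and what I would flag as the main obstacle—is that the unique continuation theorem demands vanishing on a full neighborhood of $\{x_0\}\times(t_0,t_1)$, which forces $x_0$ to lie strictly in the open exterior $\Omega^\star$ rather than on $\p \Omega$ itself. Consequently the union of cones generally covers only the open region and not its boundary (the infimum $\inf_{x_0\in\Omega^\star}d_S(x,x_0)=d_S(x,\p \Omega)$ is not attained by an interior-exterior point), so one cannot reach the closed cone directly. This is precisely why the argument first produces the open region from exterior points and then passes to the closure by continuity, using the triangle inequality for the rough $S$-distance $d_S$ to control $d_S(x,x_0)$ as $x_0\to\p \Omega$.
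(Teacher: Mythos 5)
Your argument is correct and is essentially the proof the paper intends: the paper simply defers to \cite[Lemma 3.5]{KNTAT}, whose proof is exactly this sweep of double cones $\set*{(t,x)}{d_S(x,x_0)+|t|\le T}$ over exterior base points $x_0\in\Omega^\star$ followed by passage to the closure. Your additional care about the boundary of the region and the triangle inequality for $d_S$ is a harmless refinement of the same argument.
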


We may now prove a corollary that essentially states that
\[
R_{-T}\bar \pi R_{2T}h_\infty\Big|_{\{d_S(x,\p \Theta) <T\}} \approx 0,
\]
that is, all waves in the time $T$ $s$-domain of influence of the initial data is controlled.
\begin{cor}
Assume $h_0,h_\infty$ and $T$ are as in Lemma \ref{l: SC in necessary for K_tail} and $h_\infty$ satisfies the scattering control equation.
For the elastic equation, all waves inside the slow domain of influence $\{d_s(x,\p \Theta) <T\}$ are controlled:
\begin{align*}
j_2R_{-T}\bar \pi R_{2T}h_\infty|_{\{d_S(x,\p \Theta) <T\}} &= 0,\\ Lj_1R_{-T}\bar \pi R_{2T}h_\infty|_{\{d_S(x,\p \Theta) <T\}} &= 0,
\end{align*}
where $j_{1/2}$ are projections to the first or second components of the Cauchy data respectively. That is, $R_{-T}\bar \pi R_{2T}h_\infty$ is stationary on the slow domain of influence.
\end{cor}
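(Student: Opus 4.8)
The plan is to convert the two pointwise identities into a single energy-orthogonality statement, to read the scattering-control equation as an \emph{approximate exterior-controllability} statement, and to close the argument with the elastic unique continuation corollary. First I would record the elementary but decisive observation that a pair $(f_0,f_1)\in\mathbf C$ is stationary on an open set $O$ (that is, $f_1=0$ and $Lf_0=0$ on $O$) if and only if it is orthogonal, in the energy inner product, to every Cauchy datum supported in $O$; this is just integration by parts against the elastic form together with the $L^2$ pairing of the velocity components. Writing $v:=R_{-T}\bar\pi R_{2T}h_\infty$ and $O:=\{x:d_S(x,\partial\Theta)<T\}$, the two displayed equations are therefore \emph{equivalent} to the single assertion $v\perp_E\mathbf C_O$, where $\mathbf C_O=H^1_0(O)\oplus L^2(O)$ is the space of Cauchy data supported in the slow neighbourhood of $\partial\Theta$.

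Next I would invoke the preceding lemma, which shows that a solution $h_\infty$ of the scattering-control equation satisfies $v\perp_E\mathcal C$, with $\mathcal C=\overline{\text{Im}(R_T\pi^\star)+\text{Im}(\nu R_T\pi^\star)}$ the controllable subspace. Consequently it suffices to prove the inclusion $\mathbf C_O\subseteq\mathcal C$: every Cauchy datum supported in the slow neighbourhood of $\partial\Theta$ must be an energy-limit of time-$T$ wavefields generated, forward and time-reversed, by data supported outside $\Theta$. In other words, the whole statement reduces to an approximate controllability of the slow domain of influence from the exterior, and it is precisely here that unique continuation enters.

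To establish $\mathbf C_O\subseteq\mathcal C$ I would argue by duality. If $z\perp_E\mathcal C$, then unitarity of $R_T$ together with the fact that $\pi^\star w=0$ holds exactly when $w$ is stationary outside $\Theta$ translate the two orthogonalities into the statement that the free wave $Z(s)=R_sz$ is stationary outside $\Theta$ at the two times $s=\pm T$; equivalently $\zeta:=\partial_t Z$ solves the elastic system and its full Cauchy data vanish on $\Theta^\star$ at $s=\pm T$. Granting vanishing of $\zeta$ on the whole exterior slab $[-T,T]\times\Theta^\star$, the elastic unique continuation corollary then forces $\zeta=0$ on the slow double cone $\{d_S(x,\partial\Theta)+|s|\le T\}$; at $s=0$ this says exactly that $z=Z(0)$ is stationary on $O$, i.e. $z\perp_E\mathbf C_O$. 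Since $z\in\mathcal C^\perp$ was arbitrary, $\mathbf C_O\subseteq\overline{\mathcal C}=\mathcal C$, which combined with the previous paragraph yields the corollary. The complementary far region $\{d_P(x,\partial\Theta)>T\}$ I would dispatch trivially by finite speed of propagation, reserving unique continuation for the near-boundary and interior part of $O$.

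The hard part is the unique-continuation step, and it is exactly the point at which the elastic setting is genuinely weaker than the acoustic one. The orthogonality relations only produce vanishing of $\zeta$ on $\Theta^\star$ at the two \emph{endpoints} $s=\pm T$, whereas the corollary requires vanishing on the entire exterior spacetime slab $[-T,T]\times\Theta^\star$; bridging this gap is the crux, and it is where the iterative (time-distributed) structure of the scattering-control series—which effectively supplies exterior data at the successive times $0,2T,4T,\dots$—must be exploited, in the spirit of the acoustic argument of \cite{CHKUControl}. Moreover, because Belishev's elastic unique continuation propagates only at the slow speed $c_S$ (no sharp Tataru-type statement being available for two speeds), the controllable region is the \emph{slow} double cone rather than the fast one; this is precisely why the conclusion is confined to $\{d_S(x,\partial\Theta)<T\}$ and cannot be upgraded to the full $P$-domain of influence.
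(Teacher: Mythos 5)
Your first two steps are sound and in fact repackage cleanly what the paper's preceding lemma already gives: the identification of ``stationary on $O$'' with energy-orthogonality to $H^1_0(O)\oplus L^2(O)$ is correct, and the lemma does supply $R_{-T}\bar\pi R_{2T}h_\infty=\Pi_{\mathcal C^\perp}R_Th_0$, so the corollary would indeed follow from the inclusion $\mathbf C_O\subseteq\mathcal C$. The genuine gap is exactly where you flag it, and it is not a removable technicality in your set-up: your duality argument produces, for an arbitrary $z\in\mathcal C^\perp$, only that $R_{T}z$ and $R_{-T}z$ are stationary on $\Theta^\star$, i.e.\ vanishing of the Cauchy data of $\zeta=\partial_tZ$ on $\Theta^\star$ at the \emph{two time slices} $s=\pm T$. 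Neither the unique continuation corollary (which needs $\zeta=0$ on the full slab $[-T,T]\times\Theta^\star$) nor finite speed of propagation (which only propagates vanishing \emph{into} $\Theta^\star$, not across $\partial\Theta$) can bridge two slices to a slab: a wave constituent can perfectly well sweep through $\Theta^\star$ at intermediate times while being absent there at $s=\pm T$. Worse, your proposed repair --- exploiting the iterative, time-distributed structure of the scattering-control series --- is unavailable in your own framework, because the dual element $z\in\mathcal C^\perp$ is an arbitrary vector with no associated series, tail, or multi-time data. You have also quietly strengthened the statement: $\mathbf C_O\subseteq\mathcal C$ is a genuine approximate-controllability claim about \emph{all} data supported in the slow collar, whereas the corollary only concerns the particular element $R_{-T}\bar\pi R_{2T}h_\infty$; since $\mathcal C$ encodes exterior information at the two times $\pm T$ only, there is no reason to expect the stronger inclusion to be provable (or even true) with the stated unique continuation input.

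For contrast, the paper argues directly on the specific solution rather than dually on $\mathcal C^\perp$: it sets $v=FR_{-2T}\bar\pi R_{2T}h_\infty$, observes that $\mathbf v(2T)=\bar\pi R_{2T}h_\infty$ is stationary on $\Theta^\star$ by the definition of $\bar\pi$, and that $\mathbf v(0)=(I-R\pi^\star R)h_\infty$ is stationary on $\Theta^\star$ because the scattering control equation forces $\pi^\star(I-R\pi^\star R)h_\infty=0$; it then passes to $w=\partial_tv$ and applies the unique continuation corollary centered at $t=T$, importing the slab-vanishing step from the proof of Theorem~2.2 of the acoustic paper. If you want to salvage your route, you would either have to enlarge $\mathcal C$ so that its orthogonal complement genuinely encodes exterior stationarity over the whole interval $[0,2T]$ (at which point the duality buys you nothing over the direct argument), or abandon the inclusion $\mathbf C_O\subseteq\mathcal C$ and work, as the paper does, with the additional structure carried by the particular vector $R_{-T}\bar\pi R_{2T}h_\infty$.
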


\begin{rem}
We note that the elliptic equation is the stationary elastic equation, where the parameters do have singularities. If one wanted to avoid this, then instead of using projections $\pi^ \star$, one could instead use smooth cutoffs, and obtain actual vanishing for the first component at the cost of an epsilon-type loss in the $s$-domain of influence set. Since the second part of the paper is more fundamental, we do not give more details on this.
\end{rem}

\proof
Indeed, suppose $h_{\infty}$ satisfies
\[
(I-\pi^\star R \pi^\star R)h_{\infty} = h_0.
\]
Let $v = FR_{-2T}\bar \pi R_{2T}h_{\infty}$, and let $\mathbf v(t)=(v(t),\partial_t v(t))\in\mathbf C$. Then $\mathbf{v}(2T) = \bar \pi R_{2T}h_{\infty}$ implies the second component of $\mathbf{v}(2T)$ is zero on $\Theta^\star$ while the first component is smooth. That is, $\mathbf v(2T)$ is stationary on $\Theta^\star$. Also,
\[
\mathbf{v}(0)= R_{-2T}\bar \pi R_{2T}h_{\infty}
= h_{\infty}-R_{-2T} \pi^\star R_{2T}h_{\infty}
= (I-R\pi^\star R)h_{\infty}
\]
which is stationary on $\Theta^\star$ using our assumption. Thus, as in \cite[proof of theorem 2.2]{CHKUControl}, if we replace $v$ by $w= \p_t w$, then $\mathbf w = 0$ on $\Theta^\star$ and $w$ also satisfies the elastic equation (it is a distributional solution). By unique continuation, $\mathbf w$ vanishes for $(t,x)$ such that $d_S(x,\p \Theta) + |T-t| \leq T$. In particular, the second component of $\mathbf v(T) = 0$ in $ \{x \in \Theta: d_S(x,\p \Theta)\leq T\}$ corresponding to the slow domain of influence, while the first component solves an elliptic equation by definition of $w$.
$\Box$
\\

It is now clear that understanding $\text{Im}(R_T\pi^\star)$ tells us exactly which scattered waves get eliminated through scattering control. Notice that the acoustic analog of the above corollary says that one has full control over the time $T$ domain of influence of $h_0$ as was already proven in \cite{CHKUControl}. Since we do not have full unique continuation in the elastic setting, we instead rely on microlocal analysis to solve a geometric problem.
\\

\section{Proof of proposition \ref{prop: internal source construction}}\label{s: proving internal source construction}
Before proving this proposition, it will be useful to have a notion of the microlocal counterpart to the almost direct transmission (see Definition \ref{def: almost dt}) from the previous appendix.

\subsubsection*{Microlocal almost direct transmission}

As in \cite[section 3]{CHKUControl}, we are interested in isolating the \emph{microlocal almost direct transmission} since this will be the main tool necessary to prove the main theorem in the presence of multiple scattering. Intuitively, it is the microlocal restriction of the solution at time $T$ (say) to singularities in $T^*\Theta$ whose $P$-distance from the surface $\p T^*\Theta$ is at least $T$. Formally, first let $(T^*M)_t$ be the set of covectors of depth greater than $t$ in a manifold $M$:
\[
(T^*M)_t=\{ \xi \in T^*M| d^*_{T^*M}(\xi) >t \}
\]
where $d^*_{T^*M}(\xi)$ is defined as in \cite[section 3]{CHKUControl} using $d_P$.

 Instead, we consider smooth cutoffs, and for choose nested open sets $\Theta',\Theta''$ between $\Omega$ and $\Theta$:
\[
\Omega \subset \Theta' \subset \overline{\Theta'} \subset \Theta'' \subset \overline{\Theta''} \subset \Theta.
\]

A \emph{microlocal almost direct transmission} of $h_0$ at time $T$ is a distribution $h\MDT$ satisfying
\[
h\MDT \equiv R_T h_0 \qquad \text{on }(T^*\Theta)_T \qquad \qquad \text{WF}(h\MDT) \subset \overline{(T^*\Theta'')_T}.
\]

The key to prove this proposition is to isolate $h\MDT$. For this, we need access outside of $\Omega$ to all scattered rays related to $h\MDT$ (\emph{bad} bicharacteristics defined below), which certainly includes all possible $S$-rays as well.

\begin{definition}

\begin{enumerate}
\item[(a)]
 Let $\gamma :[0,t^*] \to T^*\RR^3$ be a purely transmitted, broken, $P$ or $S$ ray that starts outside $\Omega$ and hits $k$ interfaces at discrete times $t_1,\dots,t_k >0$ say. At each such point, $\gamma$ will have one or two reflecting branches and one possible mode converted branch according to Snell's law.
Such branches that are reflections are called \emph{bad reflecting rays} associated to $\gamma$. A transmitted branch that is a mode conversion associated to $\gamma$ will be called a \emph{bad transmission}. A wave associated with such a ray will be called a bad wave and we may refer to either branch as a bad ray or bad branch. For the proof of the proposition, such bad reflecting waves are precisely the ones that will create waves (upon their next interaction with an interface) that need to be eliminated. We must also ensure that these bad mode-conversion transmissions are eliminated as well.

\item[(b)]
Bicharacteristics $\gamma_1,\gamma_2$ are \emph{connected} if their concatenation $\gamma_1 \cup \gamma_2$ is a broken bicharacteristic.
Note that mode conversions are allowed (e.g., a $P$ ray may be connected to an $S$ ray) if their tangential momenta match.
A bicharacteristic $\gamma_1$ terminating at an interface may have one or two (totally reflected with $\PS$ mode conversions), or two, three, or four (reflected and transmitted) connecting bicharacteristics there. If $\gamma_1$ has a transmitted bicharacteristic, there exists an \emph{opposite} bicharacteristic $\gamma$ sharing $\gamma_1$'s connecting bicharacteristics. There can be up to two opposite bicharacteristics (one for $P$ and one for $S$). Note that $\gamma_1$ or $\gamma_2$ (or both) may be glancing at an interface (see \cite{CHKUControl} for definitions). If it is not, we say that it is \emph{non-glancing}.
\\
\item[(c)] Fix a large time $T_s >0$ (see below). A bicharacteristic $\gamma:(t_-,t_+) \to T^*(\RR^3\setminus \Gamma)$ is ($\pm$)-\emph{escapable} if either:\\
    \begin{enumerate}
    \item[i.] it has \emph{escaped}: $\gamma$ is defined at $T\pm T_s$
    and $\gamma(T\pm T_s) \notin T^*\Theta$,
    \\
    \item[$\bullet$]
    or recursively, after only finitely many recursions, either
    \\
    \item[ii.] all of its connecting bicharacteristics at $t_{\pm}$ are ($\pm$)-escapable and are non-glancing at the interface;
        \\
    \item[iii.]
    All of its reflecting bicharacteristics (both $P$ and $S$ if they exist) are $(\pm)$ escapable and non-glancing
    and both $P$,$S$ opposite bicharacteristics are $(\mp)$ escapable and non-glancing.
\\
    \end{enumerate}
\end{enumerate}

Let us explain the choice of $T_s$ further. The idea is that we want a large enough time so that any returning bicharacteristic, even a concatenation of pure slow rays, will return to $\Theta^\star$ by time $T_s$. This will ensure that there is enough time for all wave constituents of a particular $u_h$ associated to returning bicharacteristics eventually return to $\Theta^\star$ by time $T_s$. This avoids the problems encountered in the lack of control in the previous appendix. We also note that $T_s$ is not used to discriminate between certain rays and we can even allow it to vary for different rays since we are allowed infinite time to take exterior measurements of the wavefield. Rather, it just makes the notation less cumbersome to have a fixed time beyond which other rays are irrelevant and will not affect the construction of the ``tail''

The idea is that $(+)$ escapable singularities are ones we do not worry about since they escape and do not enter the directly transmitted region. Once there are connecting rays that are not $(+)$ escapable, then those need to be eliminated. Thus, the third condition guarantees that corresponding to these non-escapable rays, there are corresponding \emph{opposite} $(-)$ escapable rays that we use to send in waves to eliminate waves associated to rays that do not escape. We will also refer to such rays or waves that are non-escapable and enter the directly transmitted region as \emph{bad} rays, resp. \emph{bad} waves. The previous definition of bad rays are exactly the ones that create (through geometric optics concatenation) the bad non-escapable rays just described.

In the final case, if the ($\pm$)-escapable connecting bicharacteristic is a reflection, then we require that there are both $P$ and $S$ opposite bicharacteristics that escape. They must be there so that we can construct an incoming wave parametrix, singular on such opposite rays, that eliminates a particular scattered wave. They must also escape so that we obtain all the necessary information not just from the fast moving $P$-waves, but the $S$-waves as well. This is because we must construct $S$ waves in addition to $P$ waves for the tail, even if one is merely trying to eliminate a single $P$ wave.
\end{definition}

\begin{rem}
The definition of escapability merely ensure that we obtain access in our measurement region of all backscattered rays caused by the direct transmission. In addition, the recursive definition and the notion of \emph{opposite} rays ensures that for any new bad scattered waves created by an appropriate tail, one will be able to eliminate them as well if they enter the direct transmission's domain of influence. The conditions should be compared to the linear problem of obtaining an observability inequality with partial data. There as well, one needs access to all the relevant rays in the measurement region.
\end{rem}

\begin{rem}
Note that the definition of $(\pm)$-escapable rays can be made more general to deal with more general geometries than our convex foliation case. Since we only need to eliminate waves associated to rays that are not escapable, this only requires having the associated number of opposite escapable rays. Thus, we need the number of opposite $(\mp)$-escapable rays to equal the number of not $(\pm)$-escapable rays that are reflecting or transmitting.
\end{rem}

Let us recall $\mathcal S \subset T^*\Omega$ as the set of $\xi$ such that every bad bicharacteristic through $\xi$ is \emph{(+)-escapable}. Note that the definition of escapable ensures that the mode conversion are non-glancing as well. For example, if a purely transmitted $P$ bicharacteristic starts outside $\Omega$ and passes through $\xi$, then all the transmitted $S$, mode-converted connecting rays are non-glancing. An analog holds for a purely transmitted $S$ bicharacteristic.

We will also need the directly transmitted component of the forward elastic wave propagator $R_T^+$.
\begin{definition}[The microlocal direct transmission]\label{def: microlocal direct trans}
Fix $(x,\xi) \in \mathcal S$ and let $\gamma_\PS$ be a purely transmitted $P$ or $S$ ray, starting outside $\Omega$ at $t=0$ and passing through $(x,\xi)$ at some time $t= T$. Assume $\gamma_\PS$ intersects $\Gamma$ exactly $k$ times. Define the \emph{directly transmitted wave constituent} FIO
\begin{equation}
\mathbf{DT}^+_{k,\PS} = \begin{cases}
r_T \Pi_\PS J\CtoS,  & k=0,\\
r_T \Pi_\PS J\BtoS \iota M_T(J\BtoB \Pi_\PS \iota M_T)^{k-1} J_{\mathbf S \to \p} \Pi_\PS J\CtoS,   & k>0
\end{cases}
\end{equation}
By construction, if $h_0$ is a distribution of Cauchy data with $\text{WF}(h_0)=\RR_+ \gamma_\PS(0)$, then $\mathbf{DT}^+_{k,\PS}h_0$ will be a distribution whose wavefront set is equal to $(x,\RR_+\xi)$. Moreover, it will be a polarized $\PS$ wave.
\end{definition}

(Motivation of the proof) In the proof we will define FIO's $\Xi_{\pm}$ that will produce the correct tails (controls outside $\Theta$) associated to $\pm$-escapable bicharacteristics to eliminate certain \emph{bad} waves.
Suppose $y= y(t,x)$ is a microlocal wave constituent ($P$ or $S$ wave) of some wavefield $u_h$ associated to a returning bicharacteristic $\gamma$. It may be represented locally by an FIO analogous to (\ref{e: Cauchy propagator}). Let $\Gamma$ be the next interface $y$ hits. Let $y' = \rho_\Gamma y$ where $\rho_\Gamma$ is the restriction operator. Now, $y$ will produce $P$ and $S$ reflected waves determined by $M_R y'$. If these waves will eventually interfere with the direct transmission, they must be eliminated (these are waves associated to non-escapable rays connected to $\gamma$). Note that if both reflected and transmitted waves from $y$ don't cause waves that interfere with the directly transmitted region, this is case $(ii)$ in the definition and nothing would need to be done. Otherwise, let $\gamma^{\PS}$ be the two opposite rays from $\gamma$, thought of as lying inside $T^*(\RR \times \Theta)$. Thus, we must send in a waves associated to the $\gamma^{\PS}$ that will eliminate $M_R y'$. That is, we want to produce an incoming wave $z$ on the side of $\Gamma_-$, such that $z':=\rho_{\Gamma}z = -M_T^{-1}M_Ry'$. The outgoing waves on the side of $\Gamma_-$ are then $M_T y'$ and $-M_RM_T^{-1}M_Ry'$ while our construction ensures there are no outgoing waves on the side of $\Gamma_+$ that will pollute the directly transmitted region. Notice that the outgoing waves $M_T y'$ and $-M_RM_T^{-1}M_Ry'$ might produce more bad waves later on (propagating forwards $(+)$) so $\Xi_+$ will need to take care of these. The new incoming wave $-M_T^{-1}M_Ry'$ might also produce bad waves in the backward direction $(-)$ so $\Xi_-$ will have to take care of those. Thus, $\Xi_{+}$ needs to be defined recursively at this step as
\[
-\Xi_-M_T^{-1}M_Ry' + \Xi_+(M_T-M_RM_T^{-1}M_R)y'.
\]
Similarly, if the transmitted part $M_Ty'$ would cause bad waves, then the definition would be
\[
-\Xi_-M_R^{-1}M_Ty' + \Xi_+(M_R-M_TM_R^{-1}M_T)y'.
\]
In this case, we take care of all possible branches (and multi-branches) that may be associated with $\gamma$ and produce bad waves.

Lastly, before beginning the proof, we cannot construct our usual parametrices near glancing rays. Fortunately, the convex foliation condition will guarantee that ``most'' (in a sense to be described soon) broken bicharacteristics (that travel for a fixed finite time $T$ say) will not glance at an interface.

\begin{lemma}\label{l: glancing rays are small}
Let $\mathcal G = \mathcal G_T \subset T^*\Theta$ be the set of $(y,\eta) \in S^*\Theta$ such that a broken bicharacteristic of length $T$ contains a glancing point. $\mathcal G$ is a manifold and under the convex foliation assumption, and has dimension at most $2n-2$. Thus, the set of covectors $(y,\eta) \in S^*\Theta$ where all broken rays of length $T$ passing through $(y,\eta)$ never glance is dense in $S^*\Theta$.
\end{lemma}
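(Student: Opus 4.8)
The plan is to prove the statement by a dimension count, showing that $\mathcal G$ is contained in a finite union of smooth submanifolds of $S^*\Theta$, each of dimension at most $2n-2$. Since $\dim S^*\Theta = 2n-1$, such a union is closed, nowhere dense, and of measure zero, so its complement is open and dense; the density of the non-glancing covectors then follows immediately. I would set this up in the unit cosphere bundle, noting that $S^*\Theta$ has base dimension $n$ and fibre dimension $n-1$, for a total of $2n-1$.

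First I would describe the glancing locus over a single interface. Fix an interface $\Gamma_i$, a smooth hypersurface of dimension $n-1$, with conormal $\nu$. Inside $S^*\Omega\big|_{\Gamma_i}$ a covector $(z,\zeta)$ is glancing for $c_\PS$ precisely when $\zeta$ is tangent to $\Gamma_i$, i.e. $\langle \zeta,\nu\rangle = 0$. This is one scalar condition on the unit cosphere fibre $S^{n-1}$, so the glancing covectors form a submanifold $\mathcal G_i \subset S^*\Omega\big|_{\Gamma_i}$ of dimension $(n-1)+(n-2) = 2n-3$ (base point on $\Gamma_i$ together with a tangential unit direction in $S^{n-2}$). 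Carrying this out for both $c_P$ and $c_S$ only doubles the number of such submanifolds.

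Next I would flow this locus out along the broken bicharacteristic flow. Away from glancing, Snell's law for reflection, transmission, and $\PS$ mode conversion preserves the tangential momentum and determines the normal momentum through the characteristic equation; by the implicit function theorem each such branch map is a smooth local diffeomorphism of $T^*\Gamma_i$, and the geodesic arcs between interfaces are smooth. Hence, for each fixed \emph{itinerary} (a finite sequence recording which interface is met and which branch — reflected, transmitted, or mode-converted — is taken at each interaction), the broken flow $\Phi$ is a smooth map on its domain, and the set of $(y,\eta)\in S^*\Theta$ whose forward broken flow along that itinerary reaches $\mathcal G_i$ within time $T$ is the image of $[0,T]\times \mathcal G_i$ under $\Phi$ run backward. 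This image has dimension at most $(2n-3)+1 = 2n-2$. The convex foliation assumption is exactly what guarantees that any broken bicharacteristic of length $T$ meets the interfaces transversally at finitely many points, with a uniformly bounded number of branchings, so that only finitely many itineraries occur; consequently $\mathcal G$ is a finite union of such images and $\dim \mathcal G \le 2n-2$.

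Finally, since a finite union of submanifolds of dimension $\le 2n-2$ is nowhere dense and of measure zero in the $(2n-1)$-dimensional manifold $S^*\Theta$, its complement — the covectors all of whose broken rays of length $T$ avoid glancing — is dense, proving the claim. The main obstacle, and the only place the geometry genuinely enters, is the bookkeeping of the branchings: I must verify that the branch maps are honest local diffeomorphisms off the glancing set (so that dimension is preserved under the flow, using that the normal momentum stays bounded away from zero there) and that the convex foliation bounds the number of interface interactions uniformly in $T$, ruling out a pathological accumulation of near-tangential reflections that would prevent $\mathcal G$ from being expressed as a finite union with the stated dimension.
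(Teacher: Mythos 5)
Your proposal is correct and follows essentially the same route the paper intends: the paper states this lemma without proof, but the argument it gestures at (in the proof of Lemma~\ref{l: purely transmitted rays are dense}, where the glancing flowout is noted to have measure zero because $\dim T^*\Gamma = \dim T^*\Omega - 2$ and the broken flow is piecewise smooth) is precisely your dimension count: the glancing locus over each interface has dimension $2n-3$ in $S^*\Theta$, flowing out along each finite itinerary adds one dimension, and the finite union of $(2n-2)$-dimensional images is nowhere dense in the $(2n-1)$-dimensional $S^*\Theta$. You correctly flag the only delicate point (finiteness of itineraries, i.e.\ no accumulation of near-tangential reflections within time $T$), which the paper also leaves implicit.
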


We also have a series of lemmas to demonstrate that under the convex foliation condition, we have enough covectors lying in $\mathcal S$.

\begin{lemma}\label{l: mono increase}
Let $\gamma$ be a transmitted geodesic with respect to some wave speed $c.$ Then $\rho \circ \gamma$ either monotonically decreases, strictly monotonically increases, or strictly decreases then strictly increases.
\end{lemma}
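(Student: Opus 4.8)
The plan is to study the scalar function $f(t) = \rho(\gamma(t))$ along the transmitted geodesic $\gamma$ and show that its only possible interior critical points are strict local minima; the three listed behaviors then follow from an elementary one-variable argument. Throughout, $\gamma$ is a unit-speed geodesic for the metric $g = c^{-2}dx^2$ on the smooth pieces, refracting transversally at the interfaces, which are level sets of $\rho$.

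First I would analyze $f$ on each smooth (interface-free) segment of $\gamma$. There $\gamma$ is a genuine $g$-geodesic, so $\nabla_{\gamma'}\gamma' = 0$ and differentiating twice gives $f'(t) = d\rho(\gamma'(t))$ and $f''(t) = \mathrm{Hess}_g\,\rho(\gamma'(t),\gamma'(t))$. At a critical point $t_0$, the condition $f'(t_0) = 0$ says exactly that $\gamma'(t_0)$ is tangent to the level set $\Sigma = \rho^{-1}(f(t_0))$. The convex foliation condition --- that each level set is geodesically convex when viewed from the region of larger $\rho$ --- is precisely the statement that $\mathrm{Hess}_g\,\rho$ is positive definite on vectors tangent to the level set (equivalently, that the second fundamental form with respect to the normal $\nabla\rho$ is positive with the normalization making tangent geodesics curve back toward increasing $\rho$). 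Hence $f''(t_0) > 0$, so every interior critical point of $f$ is a strict local minimum, and in particular $f$ has no interior local maximum.

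Next I would handle the interface crossings. At an interface $\Gamma_i \subset \rho^{-1}(t_i)$, the definition of a transmitted geodesic requires the crossing to be transversal and the normal components of $\gamma'(t_i^-)$ and $\gamma'(t_i^+)$ to have the same sign. Since the conormal to $\Gamma_i$ is parallel to $d\rho$, this means $f'(t_i^-)$ and $f'(t_i^+)$ are both nonzero with the same sign: the direction of monotonicity of $f$ is preserved across each interface, and no critical point (nor glancing) occurs there. Thus all critical points of $f$ lie in the interior of smooth segments and are strict local minima.

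Finally I would assemble the conclusion. A continuous function whose only critical points are strict local minima, and across whose finitely many interface kinks the sign of $f'$ is preserved, can have at most one critical point: two strict local minima would force an intermediate interior local maximum, which is excluded. If $f$ has no critical point it is strictly monotone (strictly increasing or strictly decreasing); if it has exactly one, it strictly decreases and then strictly increases. This gives the three stated cases. The main technical point --- and the only place real geometry enters --- is the identification in the second paragraph of the convexity hypothesis with the positivity of $\mathrm{Hess}_g\,\rho$ on the tangent space, which must be carried out with the correct metric $g = c^{-2}dx^2$ (for which, unlike the Euclidean case, tangent geodesics curve toward increasing $\rho$). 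A minor caveat is that $\rho$ may be nonsmooth on $\Gamma$, but this affects only the kinks, where the transmitted-geodesic condition already controls the sign of $f'$ directly and geometrically.
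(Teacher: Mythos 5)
Your argument is correct in outline, but it reaches the conclusion by a genuinely different route from the paper. The paper argues synthetically and by contradiction: if $\rho\circ\gamma$ turned around the forbidden way at some $t=b$ with $\tau=\rho(\gamma(b))$, one extracts (by the intermediate value theorem) a short geodesic chord $\gamma|_{[a',d']}$ with both endpoints on a nearby level set $\Sigma_{\tau'}$ and interior on the wrong side of $\Sigma_{\tau'}$, which directly violates the (strict) geodesic convexity of that level set as stated in the definition of the foliation; the interface case is dismissed in one line because a \emph{transmitted} geodesic crosses $\Gamma$, which is locally a level set, so the two adjacent segments lie on opposite sides and no turning can occur there. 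You instead take the infinitesimal route: $f=\rho\circ\gamma$ has $f''=\mathrm{Hess}_g\rho(\gamma',\gamma')$ on smooth segments, critical points force tangency to a level set, convexity forces $f''>0$ there, and a one-variable bookkeeping argument (no interior local maximum, at most one critical point, sign of $f'$ preserved across interfaces) yields the trichotomy. The paper's version has the advantage of using the synthetic definition of geodesic convexity exactly as stated, with no translation step; your version buys a cleaner and more quantitative local statement (every critical point is a nondegenerate minimum, hence isolated) and makes the "at most one turning point" count explicit, which the paper leaves implicit.

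Two caveats. First, the step you yourself flag as the main technical point --- the equivalence between "each level set is geodesically convex when viewed from $\rho^{-1}((t,T))$" and strict positivity of $\mathrm{Hess}_g\rho$ on the tangent space of the level set --- is asserted rather than proved, and it is precisely where the sign conventions are delicate; the paper avoids this translation entirely by arguing with chords. Second, both your proof and the paper's need \emph{strict} convexity (you need $f''(t_0)>0$, not $\geq 0$; the paper's chord argument likewise invokes "strict convexity" even though the definition of a convex foliation only says "geodesically convex"), so you are implicitly making the same strengthening of the hypothesis that the paper does. Neither point is a gap relative to the paper's own standard of rigor, but the first one is the part of your write-up that would actually have to be carried out in detail.
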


\proof Suppose, on the contrary, that $\rho \circ \gamma$ is nondecreasing on $[a,b]$ then nonincreasing on $[b,d]$ for some $a<b<d$. Let $\tau = \rho(b)$. If $c$ is smooth near $\gamma(b)$ then there is a neighborhood $(a',d') \subset [a,d]$ of $b$ such that $\rho(\gamma(a')) = \rho(\gamma(d')) = \tau' \geq \tau$. Then $\gamma|_{[a',d']}$ is a geodesic between points on $\Sigma_{\tau'}$ outside of $\Omega_{\tau'}$, contradicting the strict convexity of $\p \Omega_{\tau'}$. Conversely, if $c$ is discontinuous at $\gamma(b)$, then $\gamma((a,b))$ and $\gamma((b,d))$ are on opposite sides of $\Gamma$, which is locally given by $\Sigma_\tau$, by the definition of a transmitted geodesic. This is a contradiction as well. $\Box$

The next lemma states that upward-travelling geodesics are not trapped.

\begin{lemma}\label{l: purely transmitted rays are dense}
The set of $(x,\xi) \in T^*_-\Omega$ for which there exists a purely transmitted geodesic $\gamma: [a,b] \to \overline\Omega$ with $\gamma'(0)^\flat = (x,\xi)$ and $\gamma(a),\gamma(b) \in \p \Omega$ is open and dense in $T^*_-\Omega.$
\end{lemma}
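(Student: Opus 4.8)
The plan is to trace, for each $(x,\xi)\in T^*_-\Omega$, the maximal purely transmitted geodesic $\gamma$ with $\gamma(0)=x$ and $\gamma'(0)^\flat=(x,\xi)$, and to show that for a dense open set of such covectors $\gamma$ meets $\partial\Omega=\rho^{-1}(0)$ in both the forward and backward directions. The whole argument is driven by Lemma~\ref{l: mono increase}: along any transmitted geodesic $\rho\circ\gamma$ has no interior minimum, hence is either monotone or has a single interior maximum. Since $(x,\xi)\in T^*_-\Omega$ is upward-pointing, $\langle\xi,d\rho\rangle<0$ and so $\tfrac{d}{dt}\rho(\gamma(t))<0$ at $t=0$; this pins down the qualitative shape of $\rho\circ\gamma$ on either side of $0$ and reduces everything to a no-trapping statement.

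Forward direction. Because $\rho\circ\gamma$ is strictly decreasing at $0$ and can have no interior minimum, it stays strictly decreasing for all $t>0$ until $\rho=0$. Every interface met while $\rho$ decreases is crossed from the deep side $\{\rho>t_i\}$, i.e.\ ``from below,'' where by the convex foliation there is no total internal reflection, so $\gamma$ continues as a genuine transmitted geodesic through each such crossing. Strict convexity prevents trapping: $\rho\circ\gamma$ cannot asymptote to a positive level, since near a would-be tangency convexity forces a definite sign on $\tfrac{d^2}{dt^2}\rho\circ\gamma$; thus $\rho$ reaches $0$ in finite time and $\gamma$ exits transversally through $\partial\Omega$ at some $b>0$. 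This step goes through for \emph{every} upward covector and is the robust half.

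Backward direction. Here $\rho\circ\gamma$ is strictly increasing as $t$ decreases from $0$, so the geodesic dives. By Lemma~\ref{l: mono increase} it turns at most once, and strict convexity again forbids trapping, so for every direction except the lower-dimensional family aimed at the measure-zero core $\rho^{-1}(\tau_0)$ the geodesic reaches a single turning point $\rho_{\max}<\tau_0$, after which $\rho\circ\gamma$ decreases monotonically back to $0$ and exits $\partial\Omega$ at some $a<0$. The delicate issue is that on the diving portion the geodesic meets interfaces from the shallow side, where in principle total internal reflection could halt the transmitted ray; I deal with this by arranging the turning point to lie in a smooth leaf rather than on an interface and by excising the glancing (critical-angle) locus, which by Lemma~\ref{l: glancing rays are small} has codimension at least one, exactly as in the proof of Lemma~\ref{l: mathcal S is big enough}.

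Openness follows because transversal non-glancing crossings, a transversal exit at $\partial\Omega$, and a turning point in a smooth leaf are all open conditions, and the broken transmitted geodesic flow depends smoothly on its initial covector off the glancing locus, so the number of interface crossings is locally constant; a full neighbourhood of any good covector is therefore good. Density follows by deleting the nowhere-dense glancing set of Lemma~\ref{l: glancing rays are small} together with the lower-dimensional family whose backward geodesic runs into $\rho^{-1}(\tau_0)$. I expect the main obstacle to be precisely the backward leg: proving, from strict convexity alone, that the diving geodesic is not trapped and does turn around, while keeping it purely transmitted across the interfaces it meets from above, with the exceptional critical-angle behaviour confined to the nowhere-dense glancing locus.
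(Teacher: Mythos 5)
Your forward (upward) leg is essentially the paper's argument: $\rho\circ\gamma$ has no interior minimum by convexity of the leaves, the ray crosses each interface from the deep side where $\mathcal H^+_\PS\subset\mathcal H^-_\PS$ guarantees a transmitted branch of the same mode, trapping is excluded by strict convexity (the paper does this by a compactness argument on $\inf\rho\circ\gamma$ rather than your second-derivative estimate at a would-be tangency, but the content is the same), and glancing initial covectors are confined to the flowout of $T^*\Gamma$, which has positive codimension. You also supply an openness argument that the paper's proof does not spell out. All of this is fine, and you are using the geometrically correct form of the monotonicity lemma (no interior minimum of $\rho\circ\gamma$), which is what its proof actually establishes.

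The genuine gap is the backward leg, exactly where you say you expect trouble, and your proposed fix does not close it. On the diving portion the ray crosses interfaces from the shallow (slow) side into the deep (fast) side: the containment $\mathcal H^+_\PS\subset\mathcal H^-_\PS$ means $c^+\ge c^-$ across each interface, and when that inequality is strict the set of upward covectors whose backward continuation meets the interface beyond the critical angle is \emph{open}, not contained in the codimension-one critical-angle locus. Concretely, take concentric spherical leaves with $c=1$ outside radius $1/2$ and $c=2$ inside; an upward covector at radius $0.7$ with impact parameter $p\in(1/4,1/2)$ has a backward straight-line continuation that reaches the interface transversally with $\sin\alpha=2p>c^-/c^+$ and totally internally reflects, so no purely transmitted $\gamma$ with $\gamma(a)\in\partial\Omega$ exists, and this happens on an open subset of $T^*_-\Omega$. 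Excising a nowhere-dense set therefore cannot rescue density of the backward leg. For comparison, the paper's own proof sidesteps rather than solves this: it dismisses the second endpoint with ``by symmetry,'' and its exclusion of total internal reflection (``$c$ is smaller on the side opposite'') is valid only for upward crossings; the lemma as it is actually used later (escapability, Lemma~\ref{l: mathcal S is big enough}) only ever requires the upward-escaping branches. So the honest conclusion is that your forward argument reproduces the paper's proof, but the two-endpoint statement needs either a restriction to the forward endpoint or an additional hypothesis excluding supercritical backward incidence; it does not follow from deleting a lower-dimensional exceptional set.
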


\proof Our restriction to foliation upward covectors is needed to avoid total internal reflections, which would prevent $\gamma$ from reaching the boundary.

By symmetry, it suffices to show that we can find $\gamma$ with one endpoint, say $b$, on $\p \Omega$. Let $\gamma: T \to T^* \RR^3$ be the unique maximal purely transmitted bicharacteristic with $\gamma(0) = (x,\xi)$, and let $\gamma$ be its (continuous) projection onto $\RR^3$. If $\gamma(b) \in \p T^*\Omega$ we are done, so assume this does not hold, and let $s = \text{sup } I$.

If $s < \infty$, then $\gamma(s) \in \Gamma$ since otherwise the geodesic could be continued. There are two possibilities: $\gamma$ glances off $\Gamma$ ($\gamma(s) \in T^*\Gamma$), or there is total internal reflection. In the first case, note that $\gamma$ is in the flowout of $T^*\Gamma \setminus 0$ under $\Phi$; this has measure zero in $T^*\Omega$ because $\Phi$ is piecewise smooth and $\text{dim} T^* \Gamma = \text{dim}T^*\Omega -2$.

In the second case, $c$ is smaller on the side of $\Gamma$ opposite $\gamma(s^-)$ by \ref{l: mono increase} convexity of the interfaces, as noted in the proof of Theorem \ref{thm: uniqueness final thm}. This rules out internal reflection, so $\gamma$ can be continued past $s$, a contradiction.

Let us suppose now that $s= \infty$. By lemma \ref{l: mono increase}, $\rho \circ \gamma$ is increasing on $(0,\infty)$. Let $\rho^* = \inf_\gamma \rho$, and choose a sequence $s_j \to \infty$ such that $\rho(\gamma(s_j)) \to \rho^*$. By compactness, $s_j$ has a subsequence (which we may again label $s_j$) such that $\gamma(s_j)$ converges to some point $(x,\xi) \in T^*\Omega$, and by continuity $(x,\xi) \notin T^*_+\Omega$. However, by strict convexity the geodesic starting at any $(x,\xi) \in T^*_+\Omega$ immediately leaves $T^*\Omega \setminus T^*\Omega_\tau$. This is true even if $x \in \Gamma$. By continuity, this is true if we replace $(x,\xi)$ by any sufficiently close covector and in particular $\gamma(s_j)$ for sufficiently large $j$ (as noted above, total internal reflection cannot occur). Hence $\rho^*$ cannot be the infimum of $\rho$ on $\gamma$, a contradiction. $\Box$
\\

The density allows us to just recover the wavespeeds at points where all possible rays through the point never glance.
\\
\\

\emph{Proof of Proposition \ref{prop: internal source construction}}
Because any broken ray intersects only finitely many interfaces in the time interval $t \in [0,2T]$, the condition of being $(\pm)$-escapable is open, and in particular $\mathcal S$ is open.
\subsubsection*{Construction of $h_0$} Let us give a brief argument on one possible construction of the $h_0$ described in the proof just after the statement of Proposition \ref{prop: internal source construction}. First define $J^-\BtoB=vJ\BtoB v$, which is like $J\BtoB$ but propagating backwards in time. Let $v$ be distribution with wavefront set $\mathbb{R}(x, \xi)$ and let $d$ be the number of interfaces between $x$ and $\Omega^c$. Then define $h_0= J\CtoB^{-1}M_T^{-1}\Pi_{i=1}^d(J\BtoB^{-}M_T^{-1})v|_{\Gamma}$. The wavefront set of $h_0$ (viewed in the cosphere bundle) will consists of up to $2^d$ covectors (see Figure \ref{f: subsurface ray}).

\subsubsection*{Construction of $K_{tail}$}
We first define FIO's $\Xi^I_{\pm},\Xi^O_{\pm}:\mathcal C^\infty(\RR \times \p Z) \to \mathcal D'(\mathbf Z)$ of order $0$ producing tails outside $\Theta$ for $(\pm)$-escapable bicharacteristics. The $\Xi^{I/O}_+$-constructed tail for a singularity on a $(+)$-escapable bicharacteristic ensures this singularity escapes $\Theta$ by time $T+T_s$, without generating any singularities in $h\MDT$'s $\PS$-domain of influence where $h\MDT$ is associated to a purely transmitted $\PS$-ray. The $\Xi^{I/O}_-$-constructed tail generates a given singularity on a $(-)$-escapable bicharacteristic, again without causing any singularities in the $\PS$ domain of influence.
The $\Xi^O_{\pm}$ are defined on outgoing boundary data while the $\Xi^I_{\pm}$ are defined on incoming data, microlocally near the final, resp., initial covectors of $(\pm)$-escapable bicharacteristics.

Let $\gamma: (t_-,t_+) \to T^*\mathbf Z$ be $(\pm)$-escapable bicharacteristic. Denote by $\beta^O$ the pullback to the boundary of its finals point: $\beta^O = (d\iota_\Gamma)^*\gamma(t_\pm)$, where by abuse of notation we consider $\gamma(t_\pm)$ as a space-time covector $T^*(\RR \times \mathbf Z)$. Define $\beta^I=(d\iota_\Gamma)^*\gamma(t_\mp)$ similarly. We now define $\Xi_\pm^{I/O}$ microlocally near $\beta^{I/O}$, starting with the incoming maps $\Xi_{\pm}^I$.

\begin{enumerate}
\item[$\bullet$] If $t_\pm \in (0,T+T_s)$: We simply follow the bicharacteristic and apply $\Xi^O_\pm$ at the other end. In the $(+)$ case define $\Xi^I_+\equiv \Xi_+^OJ\BtoB$. In the $(-)$ case, define $\Xi^I_- \equiv \Xi_-^OJ^-\BtoB$ near $\beta^I$, where $J^-\BtoB=vJ\BtoB v$ is like $J\BtoB$ but propagating backwards in time.

\item[$\bullet$] If $\gamma$ escapes, $t_\pm \notin [0,T+T_s]$: This is the terminal case. In the $(+)$ case, there is nothing to do: define $\Xi_+ \equiv 0$ near $\beta^I$. For the $(-)$ case, define $\Xi_-^I \equiv J^{-1}\CtoB$ near $\beta^I$ to obtain the necessary Cauchy data,
\end{enumerate}

We now turn to $\Xi^O_\pm$, considering each of the cases of in the definition of $(\pm)$-escapability.

\begin{enumerate}
\item[$\bullet$] if $\gamma$ escapes: This case never arises: $\Xi^I_\pm$ is not defined in terms of $\Xi^O_\pm$ for such $\gamma.$

\item[$\bullet$] If all outgoing bicharacteristics are $(\pm)$-escapable: Recursively apply $\Xi^I_\pm$ to the reflected and transmitted (if any) bicharacteristics, defining $\Xi^O_\pm \equiv \Xi^I_\pm M$ near $\beta^O$.
\item[$\bullet$] If all the reflecting bicharacteristics are $(\pm)$-escapable, and the opposite incoming $\PS$ rays are $(\mp)$-escapable:
    This is the core case. In the $(+)$ case, near $\beta^O$ let
    \[
    \Xi^O_+ \equiv
-\Xi^I_-M_T^{-1}M_R + \Xi^I_+(M_T-M_RM_T^{-1}M_R),
    \]
The inverses here are all microlocal near the appropriate covector. The $(-)$ case is slightly different: near $\beta^O$,
 \[
    \Xi^O_- \equiv
-\Xi^I_-M_T^{-1} + \Xi^I_+M_RM_T^{-1}.
    \]

\end{enumerate}
Given $\eta \in \mathcal S \subset T^*\Theta^o$, consider all the \bad \ reflecting, $+$-escapable rays associated to $\eta$. Each is associated with a distinct sequence of reflections and transmissions $a=(a_1,\dots,a_k) \in \{R,T\}^k$ for some $k$ and corresponding $\PS$ wave microlocal mode projections  $\Pi_{\lambda_j}$, $\lambda = (\lambda_1,\dots,\lambda_k)$, and a corresponding propagation operator
\[
\mathcal P_{a,\lambda,R} = J\BtoB \Pi_{\lambda_k}M_{a_k} \cdots J\BtoB \Pi_{\lambda_2}M_{a_2}J\BtoB \Pi_{\lambda_1}M_{a_1}J\CtoB.
\]
Notice the $\lambda$ is here so that we are observing the wave (with possible reflection, transmission, and mode conversions) associated to a single broken bicharacteristic consisting of a concatenation of $P$ and $S$ rays. Likewise we can define $\mathcal P_{a,\lambda,T}$ for the transmitting, bad rays that are minus escapable. These transmitting bad rays are new for the systems setting due to multiple wave speeds and were not present in the acoustic setting of \cite{CHKUControl}.

First define
\[
A_\eta = \Xi_+^O\sum_{(a, \lambda) \in \mathfrak G} \mathcal P_{a, \lambda, R}
+\Xi^O_- \sum_{(a,\lambda)\in \mathfrak G^-} \mathcal P_{a,\lambda,T},
\]
and then define $A$ by patching together the $A_\eta$ with a microlocal partition of unity as in \cite{CHKUControl}.
 Given an $h_0$, the tail is precisely
\[
K\tail:= Ah_0
\]

The rest of the proof follows simply by construction of $\Xi_+^O$ and $\Xi_-^I$.
Recall our construction that inside $T^*\Omega_\tau$, $\WF(R_T h_0) = \WF(v)$ for some large enough $T$. One just needs $T$ to be greater than the $P$ or $S$ (depends on which case in the proposition we are considering) distance between $(x,\xi)$ and $S^*\Omega^c$, and one can increase $T$ after that by adjusting $h_0$.
 Set $h_\infty = h_0 + K\tail$, and we must verify that $Fh_\infty|_{\Omega_\tau} \equiv v$ for $t \geq T_s$. Any other waves in this region may only come from $\mathcal P_{a,\lambda,T}h_0$ or $R_t\Xi^O_{\pm} \mathcal P_{a,\lambda, R/T}h_0$ for some $t$ and $(a,\lambda) \in \mathfrak G^\pm$. But by construction of $\Xi^O_{\pm}$, any such bad wave from $\mathcal P_{a,\lambda, T}h_0$ will get cancelled by $\Xi^O_\pm\mathcal P_{a,\lambda, R/T}h_0$. The recursive definition also ensures that any new bad wave created by $\Xi^O_\pm\mathcal P^{(l)}_{a,\lambda}h_0$ also gets eliminated. Thus, neither of these constituents may produce waves whose singularities enter $\Omega_\tau$ microlocally and that completes the proof. $\Box$

\subsubsection*{Funding Acknowledgements} P.C. and V.K. were supported by the Simons Foundation under the MATH + X program. M.V.dH. was partially supported by the Simons Foundation under the MATH + X program, the National Science Foundation under grant DMS-1815143, and by the members of the Geo-Mathematical Group at Rice University. G.U. was partially supported by NSF, a Walker Professorship at UW and a Si-Yuan Professorship at IAS, HKUST.

\newpage
\bibliographystyle{plain}
\bibliography{ScatteringControl}

\end{document}